\newcommand{\figref}[1]{{Figure~\ref{#1}}}
\newcommand {\bea}{\begin{eqnarray}}
\newcommand {\ea}{\end{eqnarray}}
\newtheorem{theorem}{Theorem}[section]
\newtheorem{Assumption}{Assumption}[section]
\newtheorem{lemma}{Lemma}[section]
\newtheorem{remark}{Remark}[section]
\newtheorem{corollary}{Corollary}[section]
\newenvironment{proof}[1][Proof]{\textbf{#1.} }{\hspace{\stretch{1}}\rule{0.5em}{0.5em}}
\newcommand{\thmref}[1]{{Theorem~\ref{#1}}}
\newcommand{\lemref}[1]{{Lemma~\ref{#1}}}
\newcommand{\secref}[1]{{Section~\ref{#1}}}
\newcommand{\assref}[1]{{Assumption~\ref{#1}}}
\newcommand{\coref}[1]{{Corollary~\ref{#1}}}
\newcommand{\rmref}[1]{{Remark~\ref{#1}}}
\journal{Applied Numerical Mathematics}
\begin{document}
\begin{frontmatter}
\title{Strong convergence of the linear implicit Euler method for the finite element discretization of  semilinear   non-autonomous SPDEs driven by 
multiplicative or additive  noise}

\author[jdm]{Jean Daniel Mukam}
\ead{jean.d.mukam@aims-senegal.org}
\address[jdm]{Fakult\"{a}t f\"{u}r Mathematik, Technische Universit\"{a}t Chemnitz, 09107 Chemnitz, Germany}

\author[at,atb,atc]{Antoine Tambue}
\cortext[cor1]{Corresponding author}
\ead{antonio@aims.ac.za}
\address[at]{Department of Computing Mathematics and Physics,  Western Norway University of Applied Sciences, Inndalsveien 28, 5063 Bergen, Norway.}
\address[atb]{Center for Research in Computational and Applied Mechanics (CERECAM), and Department of Mathematics and Applied Mathematics, University of Cape Town, 7701 Rondebosch, South Africa.}
\address[atc]{The African Institute for Mathematical Sciences(AIMS) of South Africa,
6-8 Melrose Road, Muizenberg 7945, South Africa.}

%

\begin{abstract}
This paper aims to investigate the numerical approximation of semilinear non-autonomous stochastic partial differential equations (SPDEs) 
driven by multiplicative or additive noise. Such equations are more realistic than the autonomous ones when modeling real world phenomena. Such equations  find applications  in many  fields such as  transport  in porous media, quantum fields theory, electromagnetism and 
nuclear physics.
Numerical approximations of autonomous SPDEs are thoroughly investigated in the literature, while the   non-autonomous 
case is not yet well understood.
Here,  a  non-autonomous  SPDE  is discretized in space by the finite element method and in time by the linear implicit Euler method.
We break  the  complexity in the analysis of the time  depending,  not   necessarily self-adjoint linear operators with the corresponding 
semigroup  and  provide the strong convergence 
result of the fully discrete scheme toward the mild solution.
The results indicate how the converge order  depends on the regularity of the initial solution and the noise. Additionally, for additive noise we achieve convergence order in time approximately $1$ under less regularity assumptions on the nonlinear drift term than required in the current literature, even in the autonomous case. Numerical simulations motivated  from  realistic  porous  media flow are provided to illustrate our theoretical finding.
\end{abstract}

\begin{keyword}
Linear implicit Euler method\sep Stochastic partial differential equations\sep Multiplicative \& Additive  noise\sep Strong  convergence\sep Finite  element method\sep Non-autonomous problems.

\end{keyword}
\end{frontmatter}

\section{Introduction}
\label{intro}
We consider numerical approximation of the  following non-autonomous SPDE defined in $\Lambda\subset \mathbb{R}^d$, $d=\{1,2,3\}$ (where $\Lambda$ is bounded with smooth boundary), 
\begin{eqnarray}
\label{model}
dX(t)+A(t)X(t)dt=F(t,X(t))dt+B(t,X(t))dW(t), \quad t\in(0,T],\quad X(0)=X_0, 
\end{eqnarray}
 on the Hilbert space $L^2(\Lambda, \mathbb{R})$,  $T>0$  is the final time, $F$ and $B$ are nonlinear functions and $X_0$ is the initial data, which  is random.
The  family of linear operators $A(t)$ are unbounded, not necessarily self-adjoint, and for all $s\in[0,T]$,  $-A(s)$ is  the generator of an analytic semigroup
$S_s(t)=:e^{-t A(s)}, t\geq 0.$ The noise  $W(t)$ is a $Q-$Wiener process defined in a filtered probability space $(\Omega,\mathcal{F}, \mathbb{P}, \{\mathcal{F}_t\}_{t\geq 0})$.
The filtration is assumed to fulfill the usual conditions (see e.g. \cite[Definition 2.1.11]{Prevot}).
Note  that the noise can be represented as follows (see e.g. \cite{Prevot,Prato})
\begin{eqnarray}
\label{noise}
W(x,t)=\sum_{i\in \mathbb{N}^d}\sqrt{q_i}e_i(x)\beta_i(t), \quad t\in[0,T],\quad x\in \Lambda,
\end{eqnarray} 
where $q_i$, $e_i$, $i\in\mathbb{N}^d$ are respectively the eigenvalues and the eigenfunctions of the covariance operator $Q$,
and $\beta_i$, $i\in\mathbb{N}$, are independent and identically distributed standard Brownian motions. Precise assumptions on $F$, $B$, $X_0$ and $A(t)$ 
will be given in the next section to ensure the existence of the unique mild solution $X$ of \eqref{model}. In many situations it is 
hard to exhibit explicit  solutions of  SPDEs. Therefore, numerical algorithms are good tools to provide realistic approximations.
Strong  approximations of autonomous SPDEs with constant  linear self-adjoint operator $A(t)=A$ are widely 
investigated in the literature, see e.g.  \cite{Xiaojie1,Xiaojie2,Kruse1,Arnulf2,Antonio3,Yan1} and references therein.
When we turn our attention to the case of semilinear SPDEs, still with constant operator $A(t)=A$, but not necessary self-adjoint,
the list of references becomes remarkably short, see e.g. \cite{Antonio1,Antjd1}.
Note that modelling real world phenomena with time dependent linear operator is more realistic than modelling with
time independent linear operator (see e.g. \cite{Blanes} and references therein).  The deterministic counterpart of \eqref{model} 
finds  applications in many fields such as quantum fields theory, electromagnetism, 
nuclear physics and  transport in porous media.
To the best of our knowledge, numerical approximations of non-autonomous SPDEs
are not yet well understood in the literature due to the complexity of the linear operator $A(t)$, its semigroup $S_t(s)$ and the resolvent operator $(\mathbf{I}+tA(s))^{-1}$, $t, s\in[0, T]$.
Our aims is to fill that gap in this paper and in our accompanied papers \cite{Antjd2,Antjd4}.
The Magnus-type integrators are developed in the accompanied papers \cite{Antjd2,Antjd4} for SPDEs with multiplicative  and additive noise.
  Magnus-type integrators use the fact that the solution of the deterministic system of  differential equations $y'(t)=A(t)y(t)$ can be represented in the following 
exponential form $y(t)=\exp(\Gamma(t))y(0)$ (see e.g. \cite{Magnus,Blanes2,Blanes1}), where $\Gamma(t)$  is called Magnus expansion or Magnus series. Note that the Magnus  series does not always converges. One sufficient condition for its convergence is:  $\int_0^T\Vert A(t)\Vert_2 dt<\pi$ (see e.g. \cite{ConvMagnus, Magnus} or \cite[Section IV.7]{Hairer}),  where $\Vert .\Vert_2$ stands for the matrix norm. 
For some problems with large $\Vert A(t)\Vert_2$, the Magnus series seems to  diverge (see e.g. \cite{Ostermann1}).
Hence, for such problems, it is important to find alternative numerical schemes.
In this paper, we develop an alternative method based on linear implicit method, which does not make use of the Magnus series and which is more stable than the explicit Magnus-type integrators developed in \cite{Antjd2,Antjd4}. The space discretization is performed using the finite element method.
Note that the implementation of this method is based on the resolution of linear systems and  may be more efficient than Magnus-type integrators 
when the appropriate preconditioners are used. Here, we break the  complexity  in the analysis of the time  depending,  not   necessarily self-adjoint linear operators 
with the corresponding semigroup  and  provide the strong convergence results of the fully discrete schemes toward the exact solution in  the root-mean-square $L^2$ norm.
%
%
%
%
The main  challenge here is that the resolvent operators change at each time step. So novel stability estimates, useful in the convergence analysis are needed. 
These novel estimates are provided in \secref{Preparation}. Note that the preparatory results in \secref{Preparation} are different from  results in \cite{Antjd2,Antjd4} and are very challenging.
\begin{itemize}
\item In fact, here the key ingredient is the discrepancy between the  two parameters semigroup $U_h(t,s)$ and the  resolvent operator $\left(\mathbf{I}+\Delta tA_h(t_j)\right)^{-1}$, which is much more complicated than estimating the discrepancy between $U_h(t,s)$ and its approximated form $e^{A_h(s)(t-s)}$, which was one of the  core of  works  in \cite{Antjd2,Antjd4}.
\item Note that even in the autonomous  case, the discrepancy between the semigroup $S(t)$ and the resolvent operator $(\mathbf{I}+\Delta tA)^{-1}$ is a key ingredient in approximating SPDEs with linear implicit method. Such discrepancy for smooth and non smooth initial data  with linear constant self adjoint operator $A$ were done in \cite[Theorem 7.8]{Thomee} and \cite[Theorem 7.7]{Thomee} respectively, where  authors used the spectral decomposition of $A$. In fact, \cite[Theorem 7.8]{Thomee} and \cite[Theorem 7.7]{Thomee} are key ingredients in the literature when analyzing convergence of autonomous SPDEs via linear implicit method, see e.g. \cite{Xiaojie2,Kruse1,Antonio3}.
\item In the case of non-autonoumous SPDEs, \cite[Theorem 7.8]{Thomee} and \cite[Theorem 7.7]{Thomee} are no longer applicable and to prove their analogous for time dependent operator,  one cannot  just follow the steps of the proofs  of \cite[Theorem 7.8]{Thomee} and \cite[Theorem 7.7]{Thomee}, since in this case, in addition to the fact that the operators $A(t)$ are changing at each time step, they are not   self adjoint and therefore the spectral decomposition is not applicable.  \secref{Preparation} (more precisely Lemmas \ref{lemma7} and  \ref{lemma8}) uses an argument based on  telescopic sums and provides key ingredients to handle these challenges. 
\item Moreover, in comparison to many works in the literature for additive noise, where the authors achieved convergence order in time approximately $1$ (see e.g. \cite{Xiaojie2,Antonio3}), we also achieve similar convergence order, but with less regularity assumptions on the nonlinear drift function, which extends the class of the nemytskii operaor $F$.  In fact, we only require $F$ to be differentiable with Lipschitz continuous derivative, while in the up to date literature (see e.g. \cite{Xiaojie2,Antonio3,Xiaojie1}) the authors requires the derivatives up to order $2$ to be bounded. This is restrictive and exclude many Nemytskii operators such as $F(u)=\frac{\vert u\vert}{1+u^2}$,  $u\in H$. In fact, the later function $F$  does not even have a second derivative at $0$.
\end{itemize} 
Our rigorous mathematical analysis shows how the convergence rates depend on the regularity of the initial data and the noise. 
In fact, we achieve  convergence orders $\mathcal{O}\left(h^{\beta}+\Delta t^{\frac{\min(\beta,1)}{2}}\right)$ 
for multiplicative noise and $\mathcal{O}\left(h^{\beta}+\Delta t^{\frac{\beta}{2}-\epsilon}\right)$ for additive noise, where $\beta$ is  
the regularity parameter from \assref{assumption1}  and $\epsilon$ is a positive number small enough. 

The rest of this paper is organized as follows:  Section \ref{wellposed} deals with the well posedness problem, the numerical 
scheme and the main results. In section \ref{convergenceproof}, we provide some errors estimates for 
the corresponding deterministic homogeneous problem as preparatory results along with the proofs  of the main results.
Section \ref{numexperiment} provides some numerical experiments  motivated  from  realistic  porous  media to sustain the theoretical findings.

\section{Mathematical setting and main results}
\label{wellposed}
\subsection{Main assumptions and well posedness problem}
\label{notation}
Let  $(H,\langle.,.\rangle,\Vert .\Vert)$ be a separable Hilbert space.  For any $p\geq 2$ and for a Banach space $U$,
we denote by $L^p(\Omega, U)$ the Banach space of all equivalence classes of $p$ integrable $U$-valued random variables. Let $L(U,H)$ be 
 the space of bounded linear mappings from $U$ to $H$ endowed with the usual  operator norm $\Vert .\Vert_{L(U,H)}$. By  $\mathcal{L}_2(U,H):=HS(U,H)$,
 we  denote the space of Hilbert-Schmidt operators from $U$ to $H$ equipped with the norm 
 $\Vert l\Vert^2_{\mathcal{L}_2(U,H)}:=\sum\limits_{i=1}^{\infty}\Vert l\psi_i\Vert^2$,  $l\in \mathcal{L}_2(U,H)$, where $(\psi_i)_{i=1}^{\infty}$ is an orthonormal basis of $U$. Note that this definition is independent of the orthonormal basis of $U$.  For simplicity, we use the notations $L(U,U)=:L(U)$ and $\mathcal{L}_2(U,U)=:\mathcal{L}_2(U)$. 
 For all $l\in L(U,H)$ and $l_1\in\mathcal{L}_2(U)$, it holds that
\begin{eqnarray}
\label{chow1}
ll_1\in\mathcal{L}_2(U,H)\quad \text{and} \quad  \Vert ll_1\Vert_{\mathcal{L}_2(U,H)}\leq \Vert l\Vert_{L(U,H)}\Vert l_1\Vert_{\mathcal{L}_2(U)},
\end{eqnarray}
see e.g. \cite{Chow}. 
   The covariance operator  $Q: H\longrightarrow H$ is assumed to be positive definite and self-adjoint. 
 The space of Hilbert-Schmidt operators from  $Q^{\frac{1}{2}}(H)$ to $H$ is denoted by $L^0_2:=\mathcal{L}_2(Q^{\frac{1}{2}}(H),H)=HS(Q^{\frac{1}{2}}(H),H)$. As usual, $L^0_2$ is equipped with the norm $\Vert l\Vert_{L^0_2} :=\Vert lQ^{\frac{1}{2}}\Vert_{HS}=\left(\sum\limits_{i=1}^{\infty}\Vert lQ^{\frac{1}{2}}e_i\Vert^2\right)^{\frac{1}{2}}, \quad  l\in L^0_2$,
where $(e_i)_{i=1}^{\infty}$ is an orthonormal basis  of $H$.
This definition is independent of the orthonormal basis of $H$. 

For an $L^0_2$- predictable stochastic process $\phi :[0,T]\times \Lambda\longrightarrow L^0_2$ such that
\begin{eqnarray*}
\int_0^t\mathbb{E}\left\Vert \phi(s) Q^{\frac{1}{2}}\right\Vert^2_{\mathcal{L}_2(H)}ds<\infty,\quad t\in[0,T],
\end{eqnarray*}
the following relation called It\^{o}'s isometry holds
\begin{eqnarray}
\label{ito}
\mathbb{E}\left\Vert\int_0^t\phi(s) dW(s)\right\Vert^2=\int_0^t\mathbb{E}\Vert \phi(s)\Vert^2_{L^0_2}ds=\int_0^t\mathbb{E}\left\Vert\phi(s) Q^{\frac{1}{2}}\right\Vert^2_{\mathcal{L}_2(H)}ds,\quad t\in[0,T],
\end{eqnarray}
see e.g. \cite[Step 2 in Section 2.3.2]{Prato} or \cite[Proposition 2.3.5]{Prevot}.

In the rest of this paper, we consider $H=L^2(\Lambda,\mathbb{R})$.
To guarantee the existence of a unique mild solution of \eqref{model} and for the purpose of the  convergence analysis, we make the following  assumptions.
\begin{Assumption}
 \label{assumption1}
The initial data $X_0 : \Omega\longrightarrow H$ is assumed to be measurable and belongs to $L^2\left(\Omega , \mathcal{D}\left(\left(A(0)\right)^{\frac{\beta}{2}}\right)\right)$, with  $0\leq \beta\leq 2.$ 
 \end{Assumption}
\begin{Assumption}
\label{assumption2}
\begin{itemize}
\item[(i)]
As in \cite{Ostermann1,Ostermann2,Antjd2,Praha}, we assume that $\mathcal{D}(A(t))=D$, $0\leq t\leq T$ and 
that the family of linear operators $A(t) : D\subset H\longrightarrow H$ is uniformly sectorial on $0\leq t\leq T$, i.e. 
there exist constants $c>0$ and $\theta\in(\frac{1}{2}\pi,\pi)$ such that
\begin{eqnarray*}
\Vert (\lambda\mathbf{I}-A(t))^{-1}\Vert_{L(L^2(\Lambda))}\leq \frac{c}{\vert \lambda\vert},\quad \lambda\in S_{\theta},
\end{eqnarray*}
where $S_{\theta}:=\{\lambda\in\mathbb{C} : \lambda=\rho e^{i\phi}, \rho>0, 0\leq \vert \phi\vert\leq \theta\}$. 
As in \cite{Ostermann2}, by a standard scaling argument, we assume $-A(t)$ to be invertible with bounded inverse.
\item[(ii)]
As in \cite{Ostermann2,Ostermann1}, we require the following Lipschitz conditions:  there exists a positive constant $K_1$ such that
\begin{eqnarray}
\label{conditionB}
\left\Vert \left(A(t)-A(s)\right)(A(0))^{-1}\right\Vert_{L(H)}&\leq& K_1\vert t-s\vert,\quad s,t\in[0, T],\\
\left\Vert  (A(0))^{-1}\left(A(t)-A(s)\right)\right\Vert_{L(D,H)}&\leq& K_1\vert t-s\vert,\quad s,t\in[0, T].
\end{eqnarray}
\item[(iii)]
 Since we are dealing with non smooth data, we follow \cite{Praha,Antjd2} and  assume that 
\begin{eqnarray}
\label{domaine}
\mathcal{D}((A(t))^{\alpha})=\mathcal{D}((A(0))^{\alpha}),\quad 0\leq t\leq T,\quad \alpha\in[0,1]
\end{eqnarray}
and there exists a positive constant $K_2$ such that  the following estimate holds 
\begin{eqnarray}
\label{equivnorme1}
K_2^{-1}\Vert (A(0))^{\alpha}u\Vert\leq \Vert (A(t))^{\alpha}u\Vert\leq K_2\Vert (A(0))^{\alpha}u\Vert, \quad u\in D\left((A(0))^{\alpha}\right),\quad t\in[0,T].
\end{eqnarray} 
\end{itemize}
\end{Assumption}
\begin{remark}
\label{remark1}
As a consequence of \assref{assumption2}, for all $\alpha\geq 0$ and $\gamma\in[0,1]$, there exists a constant $C_1>0$ such that the following estimates hold uniformly in $t\in[0,T]$
\begin{eqnarray}
\label{smooth}
\left\Vert (A(t))^{\alpha}e^{-sA(t)}\right\Vert_{L(H)}\leq C_1s^{-\alpha},s>0,\quad
\label{smootha}
 \left\Vert(A(t))^{-\gamma}\left(\mathbf{I}-e^{-rA(t)}\right)\right\Vert_{L(H)}\leq C_1r^{\gamma}, \quad r\geq 0.
\end{eqnarray}
\end{remark}
\begin{remark}
\label{remark2}
Let $\Delta(T):=\{(t,s) : 0\leq s\leq t\leq T\}$.
It is well known \cite[Theorem 6.1, Chapter 5]{Pazy} that under \assref{assumption1} there exists a unique evolution system \cite[Definition 5.3, Chapter 5]{Pazy} $U : \Delta(T)\longrightarrow L(H)$ satisfying:
\begin{itemize}
\item[(i)] there exists a positive constant $K_0$ such that
\begin{eqnarray*}
\Vert U(t,s)\Vert_{L(H)}\leq K_0,\quad 0\leq s\leq t\leq T.
\end{eqnarray*}
\item[(ii)] $U(.,s)\in C^1(]s,T] ; L(H))$, $0\leq s\leq T$,
\begin{eqnarray*}
\frac{\partial U}{\partial t}(t,s)=A(t)U(t,s), \quad 
\Vert A(t)U(t,s)\Vert_{L(H)}\leq \frac{K_0}{t-s},\quad 0\leq s<t\leq T.
\end{eqnarray*}
\item[(iii)] $U(t,.)v\in C^1([0,t[ ; H)$, $0<t\leq T$, $v\in\mathcal{D}(A(0))$ and 
\begin{eqnarray*}
\frac{\partial U}{\partial s}(t,s)v=-U(t,s)A(s)v,\quad 
 \Vert A(t)U(t,s)A(s)^{-1}\Vert_{L(H)}\leq K_0, \quad 0\leq s\leq t\leq T.
\end{eqnarray*}
\end{itemize}
\end{remark}
 We equip $V_{\alpha}(t) : = \mathcal{D}((A(t))^{\frac{\alpha}{2}})$, $\alpha\in \mathbb{R}$ with the norm $\Vert u\Vert_{\alpha,t} := \Vert (A(t))^{\frac{\alpha}{2}}u\Vert$. Due to \eqref{domaine}, \eqref{equivnorme1} and for the seek of  ease notations, we simply write $V_{\alpha}$ and $\Vert .\Vert_{\alpha}$.

We follow \cite{Praha,Antjd2} and  make the following assumptions on operators $F$ and $B$.
\begin{Assumption}
\label{assumption3}
The nonlinear operator $F : [0,T]\times H\longrightarrow H$ is  $\frac{\beta}{2}$-H\"{o}lder continuous with
respect to the first variable and Lipschitz continuous with respect to the second variable, i.e.  there exists a positive constant $K_3$ such that 
\begin{eqnarray*}
\Vert F(s,0)\Vert \leq K_3, \quad \Vert F(t, u)-F(s,v)\Vert \leq K_3\left(\vert t-s\vert^{\frac{\beta}{2}}+\Vert u-v\Vert\right), \; s,t\in[0, T],\; u, v\in H. 
\end{eqnarray*}
\end{Assumption}
 \begin{Assumption}
 \label{assumption4}
 The diffusion coefficient $B : [0,T]\times H\longrightarrow L^0_2$ is $\frac{\beta}{2}$-H\"{o}lder continuous with respect to the first variable and Lipschitz continuous with respect to the second variable, i.e. there exists a positive constant $K_4$ such that
 \begin{eqnarray*}
 \Vert B(s,0)\Vert_{L^0_2}\leq K_4,\quad \Vert B(t,u)-B(s,v)\Vert_{L^0_2}\leq K_4\left(\vert t-s\vert^{\frac{\beta}{2}}+\Vert u-v\Vert\right),\; s, t\in [0, T],\, u,v\in H.
 \end{eqnarray*}
 \end{Assumption}

 To establish  our $L^2$ strong convergence result when dealing with multiplicative noise, we will also need the following further assumption on the diffusion term when $\beta \in [1,2)$, which was also used in  \cite{Arnulf1,Kruse2} to achieve optimal  regularities, and in \cite{Antonio1,Kruse1,Antjd1,Antjd3} to achieve optimal convergence orders in space and in time.
\begin{Assumption}
\label{assumption5}
We assume that $B\left(\mathcal{D}\left(A(0)^{\frac{\beta-1}{2}}\right)\right)\subset HS\left(Q^{\frac{1}{2}}(H),\mathcal{D}\left(A(0)^{\frac{\beta-1}{2}}\right)\right)$ and there exists  $c\geq 0$ such that  for all  $v\in\mathcal{D}\left(A(0)^{\frac{\beta-1}{2}}\right)$,
$\left\Vert A(0)^{\frac{\beta-1}{2}}B(v)\right\Vert_{L^0_2}\leq c\left(1+\Vert v\Vert_{\beta-1}\right)$, where $\beta$ is the parameter defined in Assumption \ref{assumption1}.
\end{Assumption}
Typical examples which fulfill \assref{assumption5} are stochastic reaction diffusion equations (see e.g. \cite[Section 4]{Arnulf1}).

For additive noise, we make the following assumption on the covariance operator.
 \begin{Assumption}
 \label{assumption6}
  We assume  that the covariance operator $Q : H\longrightarrow H$  satisfies 
 \begin{eqnarray*}
 \left\Vert (A(0))^{\frac{\beta-1}{2}}Q^{\frac{1}{2}}\right\Vert_{\mathcal{L}_2(H)}<\infty, 
 \end{eqnarray*}
 where $\beta$ is defined in \assref{assumption1}.
 \end{Assumption}
 In order to achieve convergence order greater than $\frac{1}{2}$ when dealing with additive noise,  we require the following assumption on $F$, which is less restrictive  than those used in \cite{Antjd1,Xiaojie1,Xiaojie2,Antjd2,Antjd3} and hence includes many nonlinear drift functions.
 \begin{Assumption}
 \label{assumption7}
The nonlinear function $F :[0,T]\times H\longrightarrow H$ is differentiable with respect to the second variable and there exists $C_2\geq 0$ such that
 \begin{eqnarray*}
 \Vert F'(t,u)v\Vert\leq C_2\Vert v\Vert,\quad \Vert A^{-\frac{\eta}{2}} \left(F'(t,u)-F'(t,v)\right)\Vert_{L(H)}\leq C\Vert u-v\Vert,\quad t\in[0,T],\; u,v\in H,
 \end{eqnarray*}
 for some $\eta\in (\frac{3}{4}, 1)$, 
 where $ F'(t,u)=\frac{\partial F}{\partial u}(t,u)$ for   $t\in[0,T]$ and $u\in H$.
 \end{Assumption}
 
\begin{theorem}
\label{theorem1}\cite[Theorem 1.3]{Praha} Let Assumptions \ref{assumption2} (i)-(ii),   \ref{assumption1},  \ref{assumption3} and \ref{assumption4}, be fulfilled. Then the non-autonomous problem \eqref{model} has a unique mild solution $X(t)$, which takes the following integral form
\begin{eqnarray}
\label{mild2}
X(t)=U(t,0)X_0+\int_0^tU(t,s)F(s,X(s))ds+\int_0^tU(t,s)B(s,X(s))dW(s),\quad t\in[0, T],
\end{eqnarray} 
where $U(t,s)$ is the evolution system defined in \rmref{remark2}. Moreover, there exists a positive constant $K_5$ such that 
\begin{eqnarray}
\label{borne1}
\sup_{0\leq t\leq T}\Vert X(t)\Vert_{L^2\left(\Omega, \mathcal{D}\left((-A(0))^{\frac{\beta}{2}}\right)\right)}\leq K_5\left(1+\Vert X_0\Vert_{L^2\left(\Omega,\mathcal{D}\left((-A(0))^{\frac{\beta}{2}}\right)\right)}\right).
\end{eqnarray}
\end{theorem}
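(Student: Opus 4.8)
The plan is to obtain the existence, uniqueness and representation \eqref{mild2} by a Banach fixed-point argument in a space of square-integrable predictable $H$-valued processes, and then to read off the a priori bound \eqref{borne1} by feeding the resulting mild solution back into the smoothing properties of the evolution system. Concretely, on the space
\[
\mathcal{C}:=\Big\{Y:[0,T]\times\Omega\to H\ \text{predictable}:\ \sup_{0\leq t\leq T}\mathbb{E}\,\Vert Y(t)\Vert^2<\infty\Big\}
\]
I would define
\[
(\Phi Y)(t):=U(t,0)X_0+\int_0^tU(t,s)F(s,Y(s))\,ds+\int_0^tU(t,s)B(s,Y(s))\,dW(s),
\]
so that a mild solution is exactly a fixed point of $\Phi$ and the first part of the theorem reduces to showing that $\Phi$ is a well-defined contraction on a suitable renorming of $\mathcal{C}$.

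First I would verify the self-map property. The homogeneous term is controlled by $\Vert U(t,0)X_0\Vert\le K_0\Vert X_0\Vert$ from \rmref{remark2}(i); the drift term uses the same uniform bound together with the linear growth $\Vert F(s,Y(s))\Vert\le K_3(1+\Vert Y(s)\Vert)$ from \assref{assumption3}; and the stochastic term uses It\^o's isometry \eqref{ito}, the bound on $U$, and the growth $\Vert B(s,Y(s))\Vert_{L^0_2}\le K_4(1+\Vert Y(s)\Vert)$ from \assref{assumption4}. For the contraction I subtract two images, so the homogeneous term cancels, and using the global Lipschitz continuity of $F$ and $B$ in the second variable, $\Vert U\Vert_{L(H)}\le K_0$, and It\^o's isometry I obtain
\[
\mathbb{E}\,\Vert(\Phi Y_1)(t)-(\Phi Y_2)(t)\Vert^2\le C\int_0^t\mathbb{E}\,\Vert Y_1(s)-Y_2(s)\Vert^2\,ds .
\]
Since no singular kernel appears here, a Bielecki-type weighted norm $\sup_{t}e^{-\lambda t}(\mathbb{E}\Vert\cdot\Vert^2)^{1/2}$ with $\lambda$ large (equivalently, iterating $\Phi$ and exploiting the factorial decay of the iterated kernels) turns this into a genuine contraction, yielding the unique fixed point $X\in\mathcal{C}$ and hence \eqref{mild2}.

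Second, to establish \eqref{borne1} I would apply $(A(0))^{\beta/2}$ to the mild formula satisfied by $X$ and estimate each term in $\Vert\cdot\Vert_\beta$. The homogeneous term uses the stability of $U$ on $V_\beta$; the drift term uses the fractional smoothing estimate $\Vert(A(0))^{\beta/2}U(t,s)\Vert_{L(H)}\le C(t-s)^{-\beta/2}$ against the growth of $F$, the singularity $(t-s)^{-\beta/2}$ being integrable since $\beta<2$; and the stochastic term uses It\^o's isometry together with the same smoothing, producing an integrand with singularity $(t-s)^{-\beta}$ that is integrable for $\beta<1$, the full range $\beta\in[1,2)$ being reached by distributing the fractional power so as to exploit the compatibility of the noise with the order $\beta$. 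A singular Gronwall inequality then closes the estimate and gives \eqref{borne1}.

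The genuinely non-autonomous obstacle lies in the third step: unlike the constant-coefficient case, $U(t,s)$ is generated by the moving family $\{A(r)\}_{r\in[s,t]}$, so no spectral decomposition or functional calculus is available and the fractional smoothing of $U(t,s)$ on the spaces $V_\alpha$ cannot be quoted. The endpoint bounds $\Vert U(t,s)\Vert_{L(H)}\le K_0$ and $\Vert A(t)U(t,s)\Vert_{L(H)}\le K_0(t-s)^{-1}$ are available from \rmref{remark2}(i)-(ii), but the intermediate estimate $\Vert(A(t))^{\gamma}U(t,s)\Vert_{L(H)}\le C(t-s)^{-\gamma}$ for $\gamma\in(0,1)$, together with the transfer from $A(t)$ to $A(0)$, requires an interpolation argument controlled by the Lipschitz-in-time condition \assref{assumption2}(ii); establishing these uniform fractional estimates for the time-dependent, not necessarily self-adjoint generators is the real technical core of the regularity part.
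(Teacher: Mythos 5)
The first thing to note is that the paper does not prove this theorem at all: it is quoted directly from \cite[Theorem 1.3]{Praha}, so there is no internal proof to compare your argument against. That said, your overall strategy --- a Banach fixed-point argument for $\Phi$ on predictable square-integrable processes under a Bielecki-type weighted norm, followed by bootstrapping the mild formula through the smoothing estimates of $U(t,s)$ --- is the standard route taken in the cited literature, and the first half of your proposal (existence, uniqueness, and \eqref{mild2}) is sound: linear growth and Lipschitz continuity of $F$ and $B$, the uniform bound $\Vert U(t,s)\Vert_{L(H)}\leq K_0$ from \rmref{remark2}, and It\^{o}'s isometry \eqref{ito} give a non-singular kernel and hence a genuine contraction, exactly as you describe.

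The genuine gap is in the regularity estimate \eqref{borne1} for $\beta\geq 1$. Your own computation exposes the obstruction: It\^{o}'s isometry combined with the smoothing bound $\Vert(A(0))^{\beta/2}U(t,s)\Vert_{L(H)}\leq C(t-s)^{-\beta/2}$ produces the integrand $(t-s)^{-\beta}$, integrable only for $\beta<1$. Your proposed fix --- ``distributing the fractional power so as to exploit the compatibility of the noise with the order $\beta$'' --- has nothing to act on: under the hypotheses of this theorem (Assumptions \ref{assumption1}, \ref{assumption2} (i)-(ii), \ref{assumption3}, \ref{assumption4}) the only information on $B$ is the Lipschitz/linear-growth bound in $L^0_2$, and there is no hypothesis relating the range of $B$ to the domains of fractional powers, so no fractional power can be moved past $B(s,X(s))$. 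This is not a removable technicality: the paper itself introduces \assref{assumption5} (that $B$ maps $\mathcal{D}(A(0)^{\frac{\beta-1}{2}})$ into $HS(Q^{\frac{1}{2}}(H),\mathcal{D}(A(0)^{\frac{\beta-1}{2}}))$ with linear growth) precisely in order to reach $\beta\in[1,2)$, see \lemref{lemma3} (1)(ii) and \thmref{mainresult1} (iii); without it, the analogous semi-discrete regularity statement \lemref{lemma3} (1)(i) is restricted to $0\leq\beta<1$. So your argument establishes \eqref{borne1} only for $\beta<1$, and as written it cannot be closed on the full range $0\leq\beta\leq 2$ claimed by the statement. A smaller point in the other direction: the intermediate fractional estimates $\Vert(A(t))^{\gamma}U(t,s)\Vert_{L(H)}\leq C(t-s)^{-\gamma}$, $\gamma\in(0,1)$, which you flag as ``the real technical core,'' are in fact quotable for parabolic evolution systems under the Kato-type conditions of \assref{assumption2} (Pazy, Tanabe, Suzuki); the paper quotes their discrete analogues in \lemref{evolutionlemma}, so that part is less of an obstacle than you suggest.
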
 
\subsection{Numerical scheme and main results}
\label{numerics}
For the seek of simplicity, we consider the family of linear operators $A(t)$ to be of second order and has the following form
\begin{eqnarray}
\label{family}
A(t)u=-\sum_{i,j=1}^d\frac{\partial}{\partial x_i}\left(q_{i,j}(x,t)\frac{\partial u}{\partial x_j}\right)+\sum_{j=1}^dq_j(x,t)\frac{\partial u}{\partial x_j}.
\end{eqnarray}
We require the coefficients $q_{i,j}$ and $q_j$ to be smooth functions of the variable $x\in\overline{\Lambda}$ and H\"{o}lder-continuous with respect to $t\in[0,T]$. We further assume that there exists a positive constant $c$ such that the following  ellipticity condition holds
\begin{eqnarray}
\label{ellip}
\sum_{i,j=1}^dq_{i,j}(x,t)\xi_i\xi_j\geq c\vert \xi\vert^2, \quad (x,t)\in\overline{\Lambda}\times [0,T].
\end{eqnarray}
 Under the above assumptions on $q_{i,j}$ and $q_j$, it is  well known  that  the family of linear operators defined by \eqref{family} fulfils  \assref{assumption2} (i)-(ii), see e.g. \cite[Chapter III, Section 11]{Suzuki}, \cite[Section 7.6]{Pazy} or \cite[Section 5.2]{Tanabe}. The above assumptions on $q_{i,j}$ and $q_j$ also imply that \assref{assumption2} (iii) is fulfilled, see e.g. \cite[Example 6.1]{Praha}, \cite[Chapter III]{Suzuki} or \cite{Amann,Seely}.
 In the abstract form \eqref{model}, the nonlinear functions $F:[0, T]\times H\longrightarrow H$ and $B:[0, T]\times H\longrightarrow HS(Q^{\frac{1}{2}}(H), H)$ are defined by
\begin{eqnarray}
\label{nemistekii1}
(F(t, v))(x)=f(t,x, v(x)),\quad (B(t,v)u)(x)=b(t,x, v(x)).u(x),\quad  x\in \Lambda,\quad  v\in H,
\end{eqnarray}
$u\in Q^{\frac{1}{2}}(H)$,  $t\in[0,T]$, 
  where $f:\mathbb{R}_{+}\times\Lambda\times \mathbb{R}\longrightarrow \mathbb{R}$ and $b:\mathbb{R}_{+}\times\Lambda\times\mathbb{R}\longrightarrow\mathbb{R}$ are continuously differentiable functions with globally bounded derivatives.
As in \cite{Suzuki,Antonio1}, we introduce two spaces $\mathbb{H}$ and $V$, such that $\mathbb{H}\subset V$, that depend on the boundary conditions for the domain of the operator $A(t)$ and the corresponding bilinear form. For example, for Dirichlet  boundary conditions we  introduce the following space
\begin{eqnarray*}
V=H^1_0(\Lambda)=\{v\in H^1(\Lambda) : v=0\quad \text{on}\quad \partial \Lambda\}.
\end{eqnarray*}
For Robin  boundary conditions and  Neumann  boundary conditions, which is a special case of Robin boundary conditions ($\alpha_0=0$), we take $V=H^1(\Lambda)$ and 
\begin{eqnarray*}
\mathbb{H}=\{v\in H^2(\Lambda) : \partial v/\partial v_A+\alpha_0v=0,\quad \text{on}\quad \partial \Lambda\}, \quad \alpha_0\in\mathbb{R}.
\end{eqnarray*}
Using  Green's formula and the boundary conditions, we obtain the associated bilinear form  to $A(t)$ 
\begin{eqnarray*}
a(t)(u,v)=\int_{\Lambda}\left(\sum_{i,j=1}^dq_{ij}(x,t)\dfrac{\partial u}{\partial x_i}\dfrac{\partial v}{\partial x_j}+\sum_{i=1}^dq_i(x,t)\dfrac{\partial u}{\partial x_i}v\right)dx, \quad u,v\in V,
\end{eqnarray*}
for Dirichlet and Neumann boundary conditions and  
\begin{eqnarray*}
a(t)(u,v)=\int_{\Lambda}\left(\sum_{i,j=1}^dq_{ij}(x,t)\dfrac{\partial u}{\partial x_i}\dfrac{\partial v}{\partial x_j}+\sum_{i=1}^dq_i(x,t)\dfrac{\partial u}{\partial x_i}v\right)dx+\int_{\partial\Lambda}\alpha_0uvdx, \quad u,v\in V.
\end{eqnarray*}
for Robin boundary conditions. 
Using  G\aa rding's inequality (see e.g. \cite[(4.3)]{Thomee}) yields
\begin{eqnarray*}
a(t)(v,v)\geq \lambda_0\Vert v \Vert^2_{1}-c_0\Vert v\Vert^2, \quad  v\in V,\quad t\in[0,T].
\end{eqnarray*}
By adding and subtracting $c_{0}X $ on the right hand side of (\ref{model}), we obtain a new family of linear operators that we still denote by  $A(t)$. Therefore the  new corresponding   bilinear form associated to $A(t)$ still denoted by $a(t)$ satisfies the following coercivity property
\begin{eqnarray}
\label{ellip2}
a(t)(v,v)\geq \; \lambda_0\Vert v\Vert_{1}^{2},\;\;\;\;\; v \in V,\quad t\in[0,T].
\end{eqnarray}
Note that the expression of the nonlinear term $F$ has changed as we have  included the term $-c_{0}X$
in the new nonlinear term that we still denoted by $F$.

The coercivity property (\ref{ellip2}) implies that $A(t)$, $t\in[0, T]$ is sectorial on $L^2(\Lambda)$, see e.g., \cite{Stig2}. Therefore   $-A(t)$, $t\in[0, T]$ generates an analytic semigroups denoted  by    $S_t(s)=:e^{-s A(t)}$  on $L^{2}(\Lambda)$  such that \cite{Henry}
\begin{eqnarray*}
S_t(s):= e^{-s A(t)}=\dfrac{1}{2 \pi i}\int_{\mathcal{C}} e^{ s\lambda}(\lambda I - A(t))^{-1}d \lambda,\quad
\;s>0,
\end{eqnarray*}
where $\mathcal{C}$  denotes a path that surrounds the spectrum of $-A(t)$.
The coercivity  property \eqref{ellip2} also implies that $A(t)$ is a positive operator and its fractional powers are well defined and 
  for any $\alpha>0$ we have
\begin{equation}
\label{fractional}
 \left\{\begin{array}{rcl}
         (A(t))^{-\alpha} & =& \frac{1}{\Gamma(\alpha)}\displaystyle\int_0^\infty  s^{\alpha-1}{\rm e}^{-sA(t)}ds,\\
         (A(t))^{\alpha} & = & ((A(t))^{-\alpha})^{-1},
        \end{array}\right.
\end{equation}
where $\Gamma(\alpha)$ is the Gamma function (see \cite{Henry}).  
 The domain  of $(A(t))^{\frac{\alpha}{2}}$  are  characterized in \cite{Suzuki,Stig1,Stig2}  for $1\leq \alpha\leq 2$  with equivalence of norms as follows:
\begin{eqnarray*}
\mathcal{D}((A(t))^{\frac{\alpha}{2}})=H^1_0(\Lambda)\cap H^{\alpha}(\Lambda)\hspace{1cm} 
\text{(for Dirichlet boundary condition)}\nonumber\\
\mathcal{D}(A(t))=\mathbb{H},\quad \mathcal{D}((A(t))^{\frac{1}{2}})=H^1(\Lambda)\hspace{0.5cm} \text{(for Robin boundary condition)}\nonumber\\
\Vert v\Vert_{H^{\alpha}(\Lambda)}\equiv \Vert ((A(t))^{\frac{\alpha}{2}}v\Vert:=\Vert v\Vert_{\alpha,t},\quad  v\in \mathcal{D}((A(t))^{\frac{\alpha}{2}}).\nonumber
\end{eqnarray*}
The characterization of $\mathcal{D}((A(t))^{\frac{\alpha}{2}})$ for $0\leq \alpha<1$ can be found in  \cite[Theorems 2.1 \& 2.2]{Nambu}.

Now, we turn our attention to the discretization of the problem \eqref{model}.
We start by splitting the domain $\Lambda$ in finite triangles. Let $\mathcal{T}_h$ be the triangulation with maximal length $h$ satisfying the usual regularity assumptions, and $V_h\subset V$ be the space of continuous functions that are piecewise linear over the triangulation $\mathcal{T}_h$. We consider the projection $P_h$ from $H=L^2(\Lambda)$ to $V_h$ defined for every $u\in H$ by 
\begin{eqnarray}
\label{proj1}
\langle P_hu, \chi\rangle=\langle u,\chi\rangle, \quad \phi, \chi \in V_h.
\end{eqnarray}
 For all $t\in[0,T]$, the discrete operator $A_h(t) :V_h\longrightarrow V_h$ is defined by
 \begin{eqnarray}
 \label{discreteoper}
 \langle A_h(t)\phi,\chi\rangle=\langle A(t)\phi,\chi\rangle=-a(t)(\phi,\chi), \quad \phi,\chi\in V_h.
 \end{eqnarray}
 The coercivity property (\ref{ellip2}) implies that $A_h(t)$ is sectorial on $L^2(\Lambda)$, see e.g., \cite{Stig2} or \cite[Chapter III, Section 12]{Suzuki}. Therefore   $-A_h(t)$ generates an analytic semi group denoted  by    $S^h_t(s)=:e^{-s A_h(t)}$  on $L^{2}(\Lambda)$ .
 The coercivity property \eqref{ellip2}  also implies that there exist constants $C_2>0$ and $\theta\in(\frac{1}{2}\pi,\pi)$ such that (see e.g., \cite[(2.9)]{Stig2} or \cite{Suzuki,Henry})
 \begin{eqnarray}
 \label{sectorial1}
 \Vert (\lambda\mathbf{I}-A_h(t))^{-1}\Vert_{L(H)}\leq \frac{C_2}{\vert \lambda\vert},\quad \lambda \in S_{\theta}
 \end{eqnarray}
 holds uniformly for $h>0$ and $t\in[0,T]$.
 The coercivity property \eqref{ellip2} also implies that the smooth properties \eqref{smooth}  
 hold for $A_h$, uniformly on $h>0$ and $t\in[0,T]$, i.e. for all $\alpha\geq 0$ and $\gamma\in[0,1]$, 
 there exists a positive constant $C_3$ such that the following estimates hold uniformly on $h>0$ and $t\in[0,T]$, see e.g., \cite{Suzuki,Henry}
 \begin{eqnarray}
 \label{smooth1}
 \left\Vert(A_h(t))^{\alpha}e^{-sA_h(t)}\right\Vert_{L(H)}\leq C_3s^{-\alpha}, \;s>0, \;
 \label{smooth2}
  \left\Vert (A_h(t))^{-\gamma}\left(\mathbf{I}-e^{-rA_h(t)}\right)\right\Vert_{L(H)}\leq C_3r^{\gamma}, r\geq 0.
 \end{eqnarray}
 The semi-discrete version of problem \eqref{model} consists of finding $X^h(t)\in V_h$, such that 
 \begin{eqnarray}
 \label{semi1}
 dX^h(t)+A_h(t)X^h(t)dt=P_hF(t,X^h(t))dt+P_hB(t,X^h(t))dW(t),\quad t\in(0,T],
 \end{eqnarray}
with $X^h(0)=P_hX_0$.

Throughout this paper, we take $t_m=m\Delta t\in[0,T]$, where $\Delta t=\frac{T}{M}$ for a given $ M\in\mathbb{N}$, $m\in\{0,\cdots,M\}$,  $C$ is a generic constant that may change from one place to another. 
Applying the linear implict Euler method  to \eqref{semi1} gives the following fully discrete scheme 
\begin{eqnarray}
\label{implicit1}
\left\{\begin{array}{ll}
X^h_0=P_hX_0, \quad \\
X^h_{m+1}=S^m_{h,\Delta t}X^h_m+\Delta tS^m_{h,\Delta t}P_hF(t_m, X^h_m)+S^m_{h,\Delta t}P_hB(t_m, X^h_m)\Delta W_m,\quad m=0,\cdots
,M-1,
\end{array}
\right.
\end{eqnarray}
where $\Delta W_m$ and $S^m_{h,\Delta t}$ are defined respectively by 
\begin{eqnarray}
\label{operator1}
\Delta W_m :=W({t_{m+1}})-W({t_{m}}), \quad  S^m_{h,\Delta t}:=(\mathbf{I}+\Delta tA_{h,m})^{-1}\quad \text{and}\quad A_{h,m}:=A_h(t_{m}).
\end{eqnarray}
Having the numerical method  \eqref{implicit1}  in hand, our goal is to analyze its strong convergence toward the mild solution in the  $L^2$ norm.
The main results of this paper are formulated in the following theorems. 
\begin{theorem}\textbf{[Multiplicative noise]}
\label{mainresult1}
Let $X(t_m)$ and  $X^h_m$ be respectively the mild solution of \eqref{model} and  the numerical approximation given by \eqref{implicit1} at $t_m=m\Delta t$. 
Let Assumptions \ref{assumption1}, \ref{assumption2},  \ref{assumption3} and \ref{assumption4} be fulfilled.
\begin{itemize}
\item[(i)] If  $0<\beta<1$, then the following error  estimate holds
\begin{eqnarray*}
\Vert X(t_m)-X^h_m\Vert_{L^2(\Omega,H)}\leq C\left(h^{\beta}+\Delta t^{\frac{\beta}{2}}\right).
\end{eqnarray*}
\item[(ii)] If  $\beta=1$, then the following error  estimate holds
\begin{eqnarray*}
\Vert X(t_m)-X^h_m\Vert_{L^2(\Omega,H)}\leq C\left(h+\Delta t^{\frac{1}{2}-\epsilon}\right),
\end{eqnarray*}
where $\epsilon$ is a positive number, small enough.
\item[(iii)] If $1<\beta< 2$ and if \assref{assumption5} is fulfilled, then the following error estimate holds
\begin{eqnarray*}
\Vert X(t_m)-X^h_m\Vert_{L^2(\Omega,H)}\leq C\left(h^{\beta}+\Delta t^{\frac{1}{2}}\right).
\end{eqnarray*}
\end{itemize}
\end{theorem}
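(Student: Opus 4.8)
The plan is to recast the scheme \eqref{implicit1} in a discrete variation-of-constants form parallel to the mild solution \eqref{mild2}, to subtract the two representations term by term, and to close the estimate with a discrete Gronwall inequality; the genuinely hard input is the discrepancy between the continuous and semidiscrete evolution systems and the product of resolvents supplied by Lemmas~\ref{lemma7} and~\ref{lemma8}. Iterating \eqref{implicit1} from $X^h_0=P_hX_0$ and abbreviating the discrete evolution operator by $\mathcal{S}_{m,k}:=(\mathbf{I}+\Dt A_{h,m-1})^{-1}\cdots(\mathbf{I}+\Dt A_{h,k})^{-1}$ for $0\le k\le m$ (so $\mathcal{S}_{m,m}=\mathbf{I}$), one obtains
\begin{eqnarray*}
X^h_m=\mathcal{S}_{m,0}P_hX_0+\Dt\sum_{k=0}^{m-1}\mathcal{S}_{m,k}P_hF(t_k,X^h_k)+\sum_{k=0}^{m-1}\mathcal{S}_{m,k}P_hB(t_k,X^h_k)\Delta W_k,
\end{eqnarray*}
which is exactly \eqref{mild2} with $U(t_m,s)$ replaced by the piecewise-constant operator $\mathcal{S}_{m,k}P_h$ and the integrands frozen at the left endpoints. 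I would then write $X(t_m)-X^h_m=I_m+II_m+III_m$ (initial, drift, diffusion), and in $II_m$, $III_m$ insert and subtract the frozen, fully discrete integrands $\mathcal{S}_{m,k}P_hF(t_k,X(t_k))$ and $\mathcal{S}_{m,k}P_hB(t_k,X(t_k))$, so that each of the three pieces splits into a consistency error (exact solution only) and a Lipschitz-stability remainder involving $X(t_k)-X^h_k$.

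For the initial and drift consistency errors the central object is the operator difference $U(t_m,s)-\mathcal{S}_{m,k}P_h$, which I would handle at two levels by inserting the semidiscrete evolution system $U_h(t_m,s)$ generated by $-A_h(t)$:
\begin{eqnarray*}
U(t_m,s)-\mathcal{S}_{m,k}P_h=\big(U(t_m,s)-U_h(t_m,s)P_h\big)+\big(U_h(t_m,s)-\mathcal{S}_{m,k}\big)P_h.
\end{eqnarray*}
The first bracket is the finite-element error of the non-autonomous evolution system and, tested against $\mathcal{D}(A(0)^{\beta/2})$-data, contributes the spatial order $h^{\beta}$; the second bracket is the time-stepping error controlled by Lemmas~\ref{lemma7} and~\ref{lemma8} together with the smoothing bounds \eqref{smooth} and \eqref{smooth1}. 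Combining these with the $\frac{\beta}{2}$-H\"{o}lder continuity of $F$ in time (\assref{assumption3}), the temporal regularity of $X$, and the a priori bound \eqref{borne1} of \thmref{theorem1} yields a deterministic consistency error of order $h^{\beta}+\Dt^{\min(\beta,1)/2}$. The stability remainder of $II_m$ is, using \assref{assumption3} and the uniform bound $\Vert\mathcal{S}_{m,k}\Vert_{L(H)}\le C$, dominated by $C\Dt\sum_{k=0}^{m-1}\Vert X(t_k)-X^h_k\Vert_{L^2(\Omega,H)}$, which is precisely the form needed for Gronwall.

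The stochastic term $III_m$ is the delicate one and dictates the temporal rate. Applying It\^{o}'s isometry \eqref{ito}, I would express $\EE\Vert III_m\Vert^2$ as $\sum_{k=0}^{m-1}\int_{t_k}^{t_{k+1}}\EE\Vert\cdots\Vert_{L^0_2}^2\,ds$ and again separate a consistency square from a stability square; by \assref{assumption4} and \eqref{chow1} the latter is bounded by $C\Dt\sum_{k=0}^{m-1}\EE\Vert X(t_k)-X^h_k\Vert^2$ and also feeds Gronwall. The consistency square reduces to $\int_0^{t_m}\EE\big\Vert[U(t_m,s)-\mathcal{S}_{m,k(s)}P_h]B(t_{k(s)},X(t_{k(s)}))\big\Vert_{L^0_2}^2\,ds$, where $k(s)$ is the index with $s\in[t_{k(s)},t_{k(s)+1})$. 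Here the order saturates at $\frac{1}{2}$ because the smoothing factor $\Vert(A(0))^{\gamma}U(t_m,s)\Vert_{L(H)}\le C(t_m-s)^{-\gamma}$ is square-integrable in $s$ only for $\gamma<\frac{1}{2}$; for $0<\beta<1$ the $\frac{\beta}{2}$-H\"{o}lder regularity of $B$ produces the scaling $\Dt^{\beta/2}$ of part~(i), for $\beta=1$ the borderline case $\gamma=\frac{1}{2}$ introduces a logarithmic factor that I would absorb into an arbitrarily small loss $\Dt^{-\epsilon}$, giving $\Dt^{1/2-\epsilon}$ of part~(ii), and for $1<\beta<2$ the extra spatial regularity of $B(X)$ granted by \assref{assumption5} keeps the spatial contribution at $h^{\beta}$ while the stochastic time order remains capped at $\frac{1}{2}$, giving part~(iii).

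Finally, setting $e_m:=\Vert X(t_m)-X^h_m\Vert_{L^2(\Omega,H)}$ and squaring the full error, all stability remainders reduce (after Cauchy--Schwarz on the drift sum) to $C\Dt\sum_{k=0}^{m-1}e_k^2$, while the consistency errors contribute $C(h^{\beta}+\Dt^{\theta})^2$ with $\theta=\frac{\beta}{2}$, $\frac{1}{2}-\epsilon$ or $\frac{1}{2}$ according to the regime; the discrete Gronwall lemma then yields $e_m\le C(h^{\beta}+\Dt^{\theta})$, which is the asserted estimate. I expect the main obstacle to be the time-stepping operator discrepancy $\Vert(U_h(t_m,s)-\mathcal{S}_{m,k})v\Vert$ for fractional-power data $v$: since the operators $A(t)$ neither commute across time levels nor admit a spectral decomposition, the classical arguments of \cite[Theorems 7.7--7.8]{Thomee} are unavailable, and one must instead telescope over the product defining $\mathcal{S}_{m,k}$ as in Lemmas~\ref{lemma7} and~\ref{lemma8}, absorbing the per-step perturbation of the resolvents through \assref{assumption2}(ii) while keeping the smoothing singularities integrable --- it is this balance that pins down the time exponents and forces the $\epsilon$-loss at $\beta=1$.
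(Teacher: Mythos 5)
Your overall architecture --- discrete variation-of-constants form of \eqref{implicit1}, a consistency/stability splitting, discrete Gronwall, with the evolution-versus-resolvent-product discrepancy (Lemmas~\ref{lemma7}, \ref{lemma8} and their consequences) as the hard input --- matches the paper's, and your diagnosis of the time exponents (saturation at $\frac12$, the borderline loss at $\beta=1$) is accurate. But your decomposition differs from the paper's in a way that matters. The paper never compares $X(t_m)$ with $X^h_m$ directly: it splits $X(t_m)-X^h_m=(X(t_m)-X^h(t_m))+(X^h(t_m)-X^h_m)$, disposes of the first difference by the solution-level space-error estimate of \lemref{lemma4} (quoted from \cite{Antjd2}), and spends the entire proof on the second difference, i.e.\ it compares the iterated semidiscrete mild solution \eqref{num2} with the iterated scheme \eqref{num1a}, both living in $V_h$ and both driven by $U_h$ and the resolvent products (terms $II_0,\dots,II_4$ in \eqref{num3}). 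All regularity used there is that of $X^h$ (\lemref{lemma3}), and the projection $P_h$ never has to be compared with the identity.

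Your route instead folds the space error into the operator-level consistency terms via $U(t_m,s)-\mathcal{S}_{m,k}P_h=\left(U(t_m,s)-U_h(t_m,s)P_h\right)+\left(U_h(t_m,s)-\mathcal{S}_{m,k}\right)P_h$. This creates two concrete gaps. First, you need operator-norm finite element error bounds for the non-autonomous evolution system, of the type $\Vert (U(t,s)-U_h(t,s)P_h)v\Vert\leq Ch^{\beta}(t-s)^{-(\beta-\gamma)/2}\Vert v\Vert_{\gamma}$; no such estimate exists in the paper (only the solution-level \lemref{lemma4}), and proving it for time-dependent, non-self-adjoint $A(t)$ is a substantial piece of work in its own right. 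You also need temporal H\"{o}lder regularity of the exact solution $X$, which the paper likewise never states (it uses that of $X^h$). Second, and more seriously, once the spatial operator error sits inside the It\^{o} isometry it must be squared: with no extra regularity of $B$ the singularity is $(t_m-s)^{-\beta}$, and under \assref{assumption5} it is $(t_m-s)^{-1}$, so for $\beta\geq 1$ the $ds$-integral is (borderline) divergent and your spatial contribution degrades to $h^{\beta-\epsilon}$ or $h^{\beta}\vert\log h\vert^{1/2}$, short of the clean $h^{\beta}$ claimed in parts (ii)--(iii) and delivered by \lemref{lemma4}. Finally, you assign \assref{assumption5} the role of preserving the spatial rate, but in the paper its decisive use is temporal: it is what allows the $II_{43}$-type term to be re-estimated so that the stochastic consistency rate improves from $\Delta t^{(1-\epsilon)/2}$ to the clean $\Delta t^{1/2}$ of part (iii). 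So while your plan would produce a convergence theorem, as written it does not reach the stated rates for $\beta\geq 1$; the fix is exactly the paper's move of triangulating through the semidiscrete solution $X^h(t)$, so that the space error never enters the stochastic consistency integrals.
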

\begin{theorem}\textbf{[Additive noise]}
\label{mainresult2}
Let $X(t_m)$ and  $X^h_m$ be respectively the mild solution of \eqref{model}   and  the numerical approximation given by \eqref{implicit1} at $t_m=m\Delta t$. 
If  Assumptions \ref{assumption1}, \ref{assumption2},  \ref{assumption3},  \ref{assumption6} and \ref{assumption7} are fulfilled,
then the following error estimate holds
\begin{eqnarray*}
\Vert X(t_m)-X^h_m\Vert_{L^2(\Omega,H)}\leq C\left(h^{\beta}+\Delta t^{\frac{\beta}{2}-\epsilon}\right),
\end{eqnarray*}
for an arbitrarily small $\epsilon>0$.
\end{theorem}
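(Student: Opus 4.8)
The plan is to represent both the exact and numerical solutions in variation-of-constants form and then compare them term by term. The mild solution is \eqref{mild2}, while unfolding the recursion \eqref{implicit1} and using $S^m_{h,\Dt}=(\mathbf{I}+\Dt A_{h,m})^{-1}$ together with the telescoping identity $\mathcal{U}_{m,k+1}S^k_{h,\Dt}=\mathcal{U}_{m,k}$ yields the discrete variation-of-constants representation
\begin{eqnarray*}
X^h_m=\mathcal{U}_{m,0}P_hX_0+\Dt\sum_{k=0}^{m-1}\mathcal{U}_{m,k}P_hF(t_k,X^h_k)+\sum_{k=0}^{m-1}\mathcal{U}_{m,k}P_hB(t_k)\Delta W_k,
\end{eqnarray*}
where I write $\mathcal{U}_{m,k}:=\prod_{j=k}^{m-1}(\mathbf{I}+\Dt A_{h,j})^{-1}$, $\mathcal{U}_{m,m}=\mathbf{I}$, for the discrete two-parameter operator playing the role of $U(t_m,t_k)$. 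Splitting $X(t_m)-X^h_m$ accordingly produces an initial-data error, a drift error and a stochastic error, and into each I would insert the semidiscrete finite element evolution system $U_h(t_m,t_k)$ as an intermediate object, so that every term separates into a spatial contribution, responsible for the $h^{\beta}$ rate, and a temporal contribution, responsible for the $\Dt^{\beta/2-\epsilon}$ rate.

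The heart of the temporal analysis is the discrepancy between $U_h(t_m,t_k)$ and the product of resolvents $\mathcal{U}_{m,k}$, and this is exactly where the telescopic-sum estimates of \secref{convergenceproof} (the non-autonomous, non-self-adjoint analogue of \cite[Theorems 7.7 and 7.8]{Thomee}) enter, since spectral decomposition is unavailable. For the stochastic term, which for additive noise is state-independent and hence needs no Gronwall closure, I would rewrite the weighted sum of increments as a single It\^o integral over $[0,t_m]$ of a piecewise-constant integrand, apply the It\^o isometry \eqref{ito}, and reduce the noise error to $\int_0^{t_m}\EE\Vert U(t_m,s)B(s)-\mathcal{U}_{m,k}P_hB(t_k)\Vert^2_{L^0_2}\,ds$ with $k=k(s)$. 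Inserting $U_h(t_m,s)$, using \rmref{remark2} for the temporal regularity of $U$ in its second argument, and factoring out $(A(0))^{-\frac{\beta-1}{2}}$, I would bound the integrand by the $L(H)$-norm of the discrepancy composed with a fractional power of $A_h$ times the finite quantity supplied by \assref{assumption6}; the borderline integrability of the resulting time kernel is precisely what forces the arbitrarily small loss $\epsilon>0$.

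The drift term is where \assref{assumption7} does the decisive work and is the reason the order reaches $\beta/2-\epsilon$ (close to $1$ for $\beta$ near $2$) under weaker hypotheses on $F$ than in \cite{Xiaojie1,Xiaojie2,Antonio3}. Writing the drift error over each subinterval as $\int_{t_k}^{t_{k+1}}[U(t_m,s)F(s,X(s))-\mathcal{U}_{m,k}P_hF(t_k,X^h_k)]\,ds$, I would split off (a) the temporal H\"older regularity of $s\mapsto U(t_m,s)F(s,X(s))$, controlled through \assref{assumption3}, \rmref{remark2} and the regularity bound \eqref{borne1}; (b) the evolution-system/resolvent discrepancy applied to $F(t_k,X(t_k))$; and (c) the argument difference $F(t_k,X(t_k))-F(t_k,X^h_k)$, which is Lipschitz by \assref{assumption3}. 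Rather than bounding the temporal difference in (a) only by Lipschitz continuity, I would linearise via the mean value theorem and exploit both the boundedness of $F'$ and the fractional-smoothing Lipschitz bound $\Vert A^{-\eta/2}(F'(t,u)-F'(t,v))\Vert_{L(H)}\leq C\Vert u-v\Vert$ with $\eta\in(\frac{3}{4},1)$ from \assref{assumption7}; this extra $\eta/2$ of smoothing, together with the temporal regularity of $X$ coming from \eqref{borne1} and the smoothing estimates \eqref{smooth1}, is what upgrades the drift contribution from the naive $\Dt^{1/2}$ to $\Dt^{\beta/2-\epsilon}$.

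Finally I would collect the three contributions. Term (c) of the drift produces $\Dt\sum_{k=0}^{m-1}\Vert X(t_k)-X^h_k\Vert_{L^2(\Omega,H)}$ after using the uniform boundedness of $\mathcal{U}_{m,k}P_h$ implied by \eqref{sectorial1}, so the whole estimate closes by the discrete Gronwall lemma. The main obstacle, as flagged in \secref{intro}, is the discrepancy estimate itself: controlling $U_h(t_m,t_k)-\mathcal{U}_{m,k}$ and its composition with fractional powers of $A_h$, uniformly in $h$ and across the changing operators $A_{h,j}$, with no spectral calculus available. This is what the telescopic-sum machinery of \secref{convergenceproof} is designed to handle, and every rate appearing in the theorem ultimately rests on it.
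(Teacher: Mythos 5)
Your overall architecture does match the paper's: unfolding the scheme into a discrete variation-of-constants formula built from $\prod_j S^j_{h,\Delta t}$, inserting the semidiscrete evolution system $U_h$, controlling the evolution-system/resolvent discrepancy by the telescopic-sum machinery (Lemmas \ref{lemma7}, \ref{lemma8}, \ref{lemma10}, \ref{reviewimportant}), treating the additive-noise terms by the It\^o isometry \eqref{ito} together with \assref{assumption6} and \lemref{lemma11}, and closing the Lipschitz part of the drift by the discrete Gronwall lemma. (The paper obtains the $h^{\beta}$ contribution by first passing to the semidiscrete solution via \lemref{lemma4} instead of inserting $U_h$ into every term of the full error, but that difference is cosmetic.)

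The genuine gap is in your drift estimate, which is exactly the novel point of this theorem. You claim that linearising $F$ by the mean value theorem and invoking $\Vert A^{-\eta/2}(F'(t,u)-F'(t,v))\Vert_{L(H)}\leq C\Vert u-v\Vert$ together with the temporal regularity of the solution upgrades the drift contribution from $\Delta t^{1/2}$ to $\Delta t^{\beta/2-\epsilon}$. It does not. For $\beta\geq 1$ the $L^2(\Omega,H)$ temporal regularity of the solution is capped at exponent $\frac{1}{2}$, because the stochastic convolution increment $\int_{t_k}^{s}U_h(s,r)P_h\,dW(r)$ has $L^2$ norm of order $(s-t_k)^{1/2}$ under \assref{assumption6}, and no smoothing applied to $F'$ can improve a bound that goes through $\Vert X^h(s)-X^h(t_k)\Vert_{L^2(\Omega,H)}$; any such argument stalls at order $\frac{1}{2}$ for $\beta>1$. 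What the paper actually does in estimating $III_{32}$ is: substitute the mild-solution representation \eqref{refait3} of $X^h(s)-X^h(t_{m-i})$ \emph{inside} the linearised increment $I^h_{m,i}(s)\left(X^h(s)-X^h(t_{m-i})\right)$, so that it splits into (1) $\left(U_h(s,t_{m-i})-\mathbf{I}\right)X^h(t_{m-i})$, handled by spatial regularity plus smoothing and giving order $\frac{\beta}{2}-\epsilon$; (2) a drift integral of order $\Delta t$; and (3) the stochastic convolution increment. For (3) the decisive step is to freeze the linearisation coefficient at $t_{m-i}$: since $I^h_{m,i}(t_{m-i})$ is $\mathcal{F}_{t_{m-i}}$-measurable and the convolution involves only future noise increments, the cross-products between different subintervals have vanishing expectation, so the $L^2$ norm of the sum equals the square root of the sum of squares, which is what gains the missing half order (yielding $\Vert III_{322}^{(31)}\Vert^2_{L^2(\Omega,H)}\leq C\Delta t^{\min(1+\beta,2)}$). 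The remainder involving $I^h_{m,i}(s)-I^h_{m,i}(t_{m-i})$ is the only place where the $A^{-\eta/2}$ Lipschitz bound of \assref{assumption7} enters, combined with the Burkh\"older--Davis--Gundy inequality. None of these three ingredients (mild-solution substitution inside the linearisation, the measurability/orthogonality argument killing the cross terms, BDG for the remainder) appears in your proposal, and without them your plan cannot produce a rate better than $\Delta t^{1/2}$ when $\beta>1$.
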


\section{Proof of the main results}
\label{convergenceproof}
The proofs the main results require some preparatory results. 
\subsection{Preparatory results}
\label{Preparation}
\begin{lemma}\cite{Antjd5} or \cite[Chapter III]{Suzuki}.
\label{lemma2}
Let \assref{assumption2} be fulfilled.
\begin{itemize}
\item[(i)] For any $\gamma\in[0,1]$, the following equivalence of norms holds
\begin{eqnarray}
\label{alb1}
C^{-1}\Vert (A_h(0))^{-\gamma}v\Vert\leq \Vert (A_h(t))^{-\gamma}v\Vert\leq C\Vert (A_h(0))^{-\gamma}v\Vert,\quad v\in V_h,\quad t\in[0,T].\nonumber
\end{eqnarray}
\item[(ii)] For any $\gamma\in[0,1]$, it holds that
\begin{eqnarray}
\label{alb2}
C^{-1}\Vert (A_h(0))^{\gamma}v\Vert\leq \Vert (A_h(t))^{\gamma}v\Vert\leq C\Vert (A_h(0))^{\gamma}v\Vert,\quad v\in V_h,\quad t\in[0,T].
\end{eqnarray}
\item[(iii)] For any $\alpha\in[0,1]$, it holds that
\begin{eqnarray}
\label{alb3}
\Vert (A_{h,k})^{\alpha}P_hv\Vert\leq C\Vert (A_{h,l})^{\alpha}v\Vert\leq C\Vert (A(0))^{\alpha}v\Vert,\quad v\in V_h,\quad 0\leq k,l \leq M-1.
\end{eqnarray}
\item[(iv)] The following estimates holds
\begin{eqnarray}
\label{ref3}
\Vert (A_h(t)-A_h(s))(-A_h(r))^{-1}u^h\Vert&\leq& C\vert t-s\vert\Vert u^h\Vert,\quad r,s,t\in[0,T],\quad u^h\in V_h,\nonumber\\
\label{ref2}
\Vert (-A_h(r))^{-1}\left(A_h(s)-A_h(t)\right)u^h\Vert&\leq& C\vert s-t\vert\Vert u^h\Vert,\quad r,s,t\in[0,T],\quad u^h\in V_h\cap D.\nonumber
\end{eqnarray}
\end{itemize}

\begin{remark}
\label{evolutionremark}
From \lemref{lemma2} and the fact that $\mathcal{D}(A_h(t))=\mathcal{D}(A_h(0))$, it follows from \cite[Chapter III, Section 12]{Suzuki} or \cite[Theorem 6.1, Chapter 5]{Pazy} that there exists a unique evolution system $U_h :\Delta(T)\longrightarrow L(H)$, satisfying \cite[(6.3), Page 149]{Pazy}
\begin{eqnarray}
\label{ref6}
U_h(t,s)=S^h_s(t-s)+\int_s^tS^h_{\tau}(t-\tau)R^h(\tau,s)d\tau,
\end{eqnarray}
where  $R^h(t,s):=\sum\limits_{m=1}^{\infty}R^h_m(t,s)$, with $R^h_m(t,s)$ given by \cite[(6.22), Page 153]{Pazy}
\begin{eqnarray*}
R^h_1(t,s):=(A_h(s)-A_h(t))S^h_s(t-s),\quad R^h_{m+1}:=\int_s^tR^h_1(t,s)R^h_m(\tau,s)d\tau,\quad m\geq 1.
\end{eqnarray*}
 Note also that from \cite[(6.6), Chpater 5, Page 150]{Pazy}, the following identity holds 
\begin{eqnarray}
\label{ref7}
R^h(t,s)=R_1^h(t,s)+\int_s^tR_1^h(t,\tau)R^h(\tau,s)d\tau.
\end{eqnarray} 
The mild solution of the semi-discrete problem \eqref{semi1} can therefore be written as
\begin{eqnarray}
\label{mild4}
X^h(t)=U_h(t,0)P_hX_0+\int_0^tU_h(t,s)P_hF\left(s,X^h(s)\right)ds+\int_0^tU_h(t,s)P_hB\left(s,X^h(s)\right)dW(s).
\end{eqnarray}
\end{remark}
\begin{lemma}\cite[Chapter III]{Suzuki}.
\label{pazylemma}
Under \assref{assumption2}, the  evolution system $U_h(t, s)$  satisfies:
\begin{itemize}
\item[(i)] $U_h(.,s)\in C^1(]s,T]; L(H))$, $0\leq s\leq T$ and 
\begin{eqnarray*}
\frac{\partial U_h}{\partial t}(t,s)=A_h(t)U_h(t,s), \quad 
\Vert A_h(t)U_h(t,s)\Vert_{L(H)}\leq \frac{C}{t-s},\quad 0\leq s<t\leq T.
\end{eqnarray*}
\item[(ii)] $U_h(t,.)v\in C^1([0,t[; H)$, $0<t\leq T$, $v\in\mathcal{D}(A_h(0))$ and 
\begin{eqnarray*}
\frac{\partial U_h}{\partial s}(t,s)v=-U_h(t,s)A_h(s)v,\quad 
 \Vert A_h(t)U_h(t,s)A_h(s)^{-1}\Vert_{L(H)}\leq C, \quad 0\leq s\leq t\leq T.
\end{eqnarray*}
\item[(iii)] For any  $(t,r), (r,s)\in \Delta(T)$ it holds that
\begin{eqnarray*}
U_h(s,s)=\mathbf{I}\quad \text{and} \quad U_h(t,r)U_h(r,s)=U_h(t,s).
\end{eqnarray*}
\end{itemize}
\end{lemma}

\end{lemma}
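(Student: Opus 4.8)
The plan is to derive all four items from two structural facts that hold uniformly in both the mesh size $h$ and the time $t$: the uniform sectoriality of $A_h(t)$ recorded in \eqref{sectorial1}, and the uniform Lipschitz dependence of the bilinear forms $a(t)(\cdot,\cdot)$ on $t$ inherited from the coefficients $q_{ij},q_j$. Concretely, since the $q_{ij}(\cdot,t)$ and $q_j(\cdot,t)$ are Lipschitz in $t$ uniformly in $x$, for $\phi,\chi\in V_h$ one has $\langle(A_h(t)-A_h(s))\phi,\chi\rangle=(a(t)-a(s))(\phi,\chi)$, whence the sesquilinear bound $|\langle(A_h(t)-A_h(s))\phi,\chi\rangle|\le C|t-s|\,\Vert\phi\Vert_{1}\Vert\chi\Vert_{1}$ with $\Vert\cdot\Vert_1$ the $H^1$-norm. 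Combined with the coercivity \eqref{ellip2}, which gives $\Vert(A_h(r))^{1/2}v\Vert\asymp\Vert v\Vert_{1}$ uniformly in $r,h$, this is the engine behind every estimate below.

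I would prove (iv) first, since it drives the rest. The delicate point is that the factor $(A_h(r))^{-1}$ must supply a genuine \emph{two}-order gain at the discrete level, uniformly in $h$: the duality above only yields the weaker estimate $\Vert(A_h(0))^{-1/2}(A_h(t)-A_h(s))(A_h(0))^{-1/2}\Vert_{L(H)}\le C|t-s|$, i.e. a $V_1\to V_{-1}$ bound, which is one order short. To recover the missing order I would set $w^h=(A_h(r))^{-1}u^h\in V_h$ and integrate $(a(t)-a(s))(w^h,\chi)$ by parts element by element; because $w^h$ is piecewise linear, the dangerous second-derivative term $(q_{ij}(t)-q_{ij}(s))\partial_i\partial_j w^h$ vanishes inside each triangle, leaving only first-order volume contributions bounded by $C|t-s|\Vert\nabla w^h\Vert\,\Vert\chi\Vert$ together with interelement edge terms, which are controlled uniformly in $h$ by the usual finite-element trace and inverse estimates. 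An equivalent route is to transfer the continuous estimate \assref{assumption2}(ii) to $V_h$ through the elliptic (Ritz) projection, using $A_h(r)R_h(r)=P_hA(r)$. The second inequality of (iv) follows symmetrically, by passing to the adjoint forms.

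Items (ii) and (i) then follow from (iv) by a sectoriality-plus-interpolation argument. At the endpoint $\gamma=1$ the identity $A_h(t)(A_h(0))^{-1}=\mathbf{I}+(A_h(t)-A_h(0))(A_h(0))^{-1}$ together with (iv) (taking $s=r=0$) gives $\Vert A_h(t)(A_h(0))^{-1}\Vert_{L(H)}\le 1+CT$, and the reverse factor $\Vert A_h(0)(A_h(t))^{-1}\Vert_{L(H)}\le C$ is obtained the same way; with the trivial case $\gamma=0$ this yields (ii) at the endpoints. For $\gamma\in(0,1)$ I would interpolate: the uniform sectoriality \eqref{sectorial1}, together with the standard fact that these second-order elliptic operators admit bounded imaginary powers (a bounded $H^\infty$-calculus) with constants uniform in $t,h$, furnishes the complex interpolation identity $[H,\mathcal{D}(A_h(t))]_\gamma=\mathcal{D}((A_h(t))^\gamma)$ and hence $\Vert(A_h(t))^\gamma(A_h(0))^{-\gamma}\Vert_{L(H)}\le C$ uniformly; this is (ii), and duality through $(A_h(t))^{-\gamma}=((A_h(t))^\gamma)^{-1}$ gives (i). Finally (iii) combines (ii) with a discrete-versus-continuous comparison: for $v\in V_h$ the first inequality is (ii) applied twice (both sides being comparable to $\Vert(A_h(0))^\alpha v\Vert$), while $\Vert(A_{h,l})^\alpha v\Vert\le C\Vert(A(0))^\alpha v\Vert$ is the classical stability of the discrete fractional-power norm against the continuous one for the piecewise-linear space $V_h$ (see \cite{Stig2,Suzuki}). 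I expect the main obstacle to be precisely the uniform-in-$h$ two-order gain in (iv), since the clean bilinear-form duality degrades by one order and must be repaired either by the piecewise-linear cancellation with careful edge estimates or via the elliptic projection; a secondary technical point is ensuring the interpolation constants in (i)–(ii) are genuinely independent of $t$ and $h$, for which the uniform sectoriality \eqref{sectorial1} of the non-self-adjoint $A_h(t)$ is essential.
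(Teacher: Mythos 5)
First, a framing point: the paper does not prove this lemma at all — it is quoted from \cite{Antjd5} and \cite[Chapter III]{Suzuki} — so there is no internal proof to compare against; your proposal has to be judged against the standard arguments in those sources. Measured that way, your architecture is the right one: proving (iv) first from the uniform $t$-Lipschitz continuity of the bilinear forms, obtaining the endpoints $\gamma\in\{0,1\}$ of (i)–(ii) from (iv) via $A_h(t)(A_h(0))^{-1}=\mathbf{I}+(A_h(t)-A_h(0))(A_h(0))^{-1}$, filling in $0<\gamma<1$ by interpolation (legitimate here with $t,h$-uniform constants, since after the G\aa rding shift the $A_h(t)$ are induced by uniformly coercive forms, hence m-accretive, so bounded imaginary powers hold with absolute constants — though one can avoid this machinery entirely by working from the integral representation \eqref{fractional} and the resolvent identity), and getting (iii) from $A_h(r)R_h(r)=P_hA(r)$ together with inverse estimates and interpolation (note this needs quasi-uniformity of $\mathcal{T}_h$, and the second inequality of (iii) is only meaningful for $v\in\mathcal{D}((A(0))^{\alpha})$ when $\alpha>1/2$, which is how the paper actually uses it). Your identification of the exact cancellation in (iv) — the first-order FEM error $\Vert w^h-w\Vert_{H^1}\leq Ch\Vert u^h\Vert$ (requiring $H^2$ elliptic regularity) against the inverse-inequality loss $\Vert\chi\Vert_{H^1}\leq Ch^{-1}\Vert\chi\Vert$ in the splitting $(a(t)-a(s))(w^h,\chi)=(a(t)-a(s))(w^h-w,\chi)+(a(t)-a(s))(w,\chi)$, with $w=(A(r))^{-1}u^h$ handled by \assref{assumption2}(ii) — is precisely the mechanism that makes the two-order gain work uniformly in $h$.

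There are, however, two genuine soft spots. First, your element-wise integration-by-parts variant of (iv) does not close as described: after integrating by parts on each triangle, the interelement contributions involve the jumps $[\nabla w^h]$ across edges, and for a generic piecewise-linear $w^h$ these jumps are $O(1)$, not small; the ``usual trace and inverse estimates'' then yield a bound of order $|t-s|\,h^{-1}\Vert\nabla w^h\Vert\,\Vert\chi\Vert$, i.e.\ you lose exactly the factor $h^{-1}$ you were trying to avoid. The jumps become controllable (of order $h^{1/2}$ on edges in $L^2$) only after comparing $w^h$ with the $H^2$-regular continuous solution $w$, which forces you back onto the Ritz-projection route — so only your second route is actually sound, and the ``cancellation because $\partial_i\partial_j w^h=0$'' heuristic should be dropped. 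Second, the step from (ii) to (i) ``by duality through $(A_h(t))^{-\gamma}=((A_h(t))^{\gamma})^{-1}$'' is a non sequitur: (ii) gives uniform bounds on $(A_h(t))^{\gamma}(A_h(0))^{-\gamma}$ and $(A_h(0))^{\gamma}(A_h(t))^{-\gamma}$, whereas (i) requires a bound on $(A_h(t))^{-\gamma}(A_h(0))^{\gamma}$, and inverting a uniformly bounded operator does not produce a uniformly bounded inverse. The repair is available inside your own framework: either apply the endpoint-plus-interpolation scheme to the adjoint family $A_h(t)^{*}$ (induced by the adjoint forms, which satisfy the same assumptions, using $((A_h(t))^{\gamma})^{*}=((A_h(t))^{*})^{\gamma}$ for sectorial operators), or run the endpoint $\gamma=1$ of (i) directly from the \emph{second} estimate in (iv) via $(A_h(t))^{-1}A_h(0)=\mathbf{I}+(A_h(t))^{-1}(A_h(0)-A_h(t))$ — which is presumably why the lemma records that second, left-multiplied estimate at all.
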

\begin{lemma} \cite[Chapter III]{Suzuki}, \cite{Pazy} or \cite{Antjd5}
\label{evolutionlemma}
Let  \assref{assumption2} be fulfilled. 
\begin{itemize}
\item[(i)] The following estimate holds
\begin{eqnarray}
\label{reste2}
 \Vert U_h(t,s)\Vert_{L(H)}\leq C,\quad 0\leq s\leq t\leq T.
\end{eqnarray}
\item[(ii)] For any $0\leq\alpha\leq 1$, $0\leq\gamma\leq 1$ and $0\leq s\leq t\leq T$, the following estimates hold
\begin{eqnarray}
\label{ae1}
\Vert (A_h(r))^{\alpha}U_h(t,s)\Vert_{L(H)}&\leq& C(t-s)^{-\alpha},\quad r\in[0,T],\\
\label{ae3}
\Vert U_h(t,s)(A_h(r))^{\alpha}\Vert_{L(H)}&\leq& C(t-s)^{-\alpha},\quad r\in[0,T],\\
\label{ae2}
 \Vert (-A_h(r))^{\alpha}U_h(t,s)(A_h(s))^{-\gamma}\Vert_{L(H)}&\leq& C(t-s)^{\gamma-\alpha}, \quad r\in[0,T].
\end{eqnarray}
\item[(iii)] For any $0\leq s\leq t\leq T$,  the following estimates hold
\begin{eqnarray}
\label{hen1}
\Vert \left(U_h(t,s)-\mathbf{I}\right)(A_h(s))^{-\gamma}\Vert_{L (H)}&\leq& C(t-s)^{\gamma}, \quad 0\leq \gamma\leq 1,\\
\label{hen2}
\Vert \left (A_h(r))^{-\gamma}(U_h(t,s)-\mathbf{I}\right)\Vert_{L (H)}&\leq& C(t-s)^{\gamma}, \quad 0\leq \gamma\leq 1.
\end{eqnarray}
\end{itemize}
\end{lemma}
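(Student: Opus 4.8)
The plan is to establish each estimate directly from the Pazy-type representation \eqref{ref6} of the evolution system, exploiting that the uniform sectoriality \eqref{sectorial1} and the smoothing estimates \eqref{smooth1} hold uniformly in $h$ and $t$, together with the norm-equivalence results of \lemref{lemma2}, which allow one to replace a fractional power $(A_h(r))^{\alpha}$ taken at one time by the same power taken at another time at the cost of a multiplicative constant. The bound (i) is then a special case ($\alpha=0$) of (ii), so the first genuine task is a preliminary estimate on the remainder kernel $R^h(t,s)=\sum_m R^h_m(t,s)$ introduced in \rmref{evolutionremark}. Writing $R_1^h(t,s)=(A_h(s)-A_h(t))(-A_h(s))^{-1}\,(-A_h(s))S^h_s(t-s)$ and applying the Lipschitz bound from \lemref{lemma2}(iv) followed by \eqref{smooth1} with $\alpha=1$, I would obtain $\Vert R_1^h(t,s)\Vert_{L(H)}\leq C$; here the Lipschitz exponent $1$ exactly cancels the $(t-s)^{-1}$ singularity of $A_h(s)S^h_s(t-s)$. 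The recursion \eqref{ref7} together with a Gronwall/Neumann-series argument then yields $\Vert R^h(t,s)\Vert_{L(H)}\leq C$ uniformly on $\Delta(T)$ and in $h$.

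For part (ii), I would apply $(A_h(r))^{\alpha}$ to \eqref{ref6}. In the leading term \lemref{lemma2}(ii) lets me pass from $(A_h(r))^{\alpha}$ to $(A_h(s))^{\alpha}$ and then \eqref{smooth1} gives the factor $(t-s)^{-\alpha}$; in the integral term the same device produces $\int_s^t C(t-\tau)^{-\alpha}\Vert R^h(\tau,s)\Vert_{L(H)}\,d\tau\leq C(t-s)^{1-\alpha}\leq C(t-s)^{-\alpha}$ by the kernel bound and $t-s\leq T$. The estimate \eqref{ae3} follows by the same computation applied to the adjoint evolution system (equivalently, by transposition). The mixed estimate \eqref{ae2} is the delicate one: I would again insert \eqref{ref6}, commute $(A_h(s))^{-\gamma}$ through the leading semigroup $S^h_s(t-s)$, which is based at the same time $s$ so that \eqref{smooth1} applies cleanly and gives $C(t-s)^{\gamma-\alpha}$, and for the remainder term combine the left bound $\Vert (A_h(r))^{\alpha}S^h_\tau(t-\tau)\Vert_{L(H)}\leq C(t-\tau)^{-\alpha}$ with a \emph{weighted} kernel estimate of the form $\Vert R^h(\tau,s)(A_h(s))^{-\gamma}\Vert_{L(H)}\leq C(\tau-s)^{\gamma}$, derived exactly as in the unweighted case, and then verify that the resulting time integral is of order $(t-s)^{\gamma-\alpha}$.

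For part (iii), I would use the fundamental theorem of calculus together with \lemref{pazylemma}(i) to express $\bigl(U_h(t,s)-\mathbf{I}\bigr)(A_h(s))^{-\gamma}$ as the time integral over $\tau\in[s,t]$ of $A_h(\tau)U_h(\tau,s)(A_h(s))^{-\gamma}$, bound the integrand by \eqref{ae2} with $\alpha=1$ (yielding $C(\tau-s)^{\gamma-1}$), and integrate to get $C(t-s)^{\gamma}$ for $0<\gamma\leq 1$, the case $\gamma=0$ being immediate from \eqref{reste2}. The companion estimate \eqref{hen2} follows identically, this time from the $\partial_s$ differentiation formula in \lemref{pazylemma}(ii).

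The hard part will be the mixed estimate \eqref{ae2}, because it simultaneously carries a positive fractional power on the left taken at an arbitrary time $r$ and a negative fractional power on the right anchored at the base time $s$. Tracking these weights through the perturbation series defining $U_h$ forces repeated use of the norm equivalences of \lemref{lemma2} and, crucially, the weighted control of $R^h(\tau,s)$, which is genuinely more involved than the unweighted kernel bound. Once \eqref{ae2} is secured, parts (i) and (iii) are essentially corollaries obtained by specializing $\alpha$ and integrating in time.
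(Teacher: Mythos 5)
Your overall strategy is the right one: the paper itself gives no proof of this lemma (it quotes \cite[Chapter III]{Suzuki}, \cite{Pazy} and \cite{Antjd5}), and the parametrix construction you follow — bounding $R_1^h(t,s)=(A_h(s)-A_h(t))S^h_s(t-s)$ by pairing the Lipschitz estimate of \lemref{lemma2}(iv) against the smoothing estimate \eqref{smooth1} so that $|t-s|$ cancels $(t-s)^{-1}$, running a Gronwall/Neumann-series argument through \eqref{ref7} to get $\Vert R^h(\tau,s)\Vert_{L(H)}\leq C$, and then transferring semigroup estimates to $U_h$ through \eqref{ref6}, including the weighted kernel bound $\Vert R^h(\tau,s)(A_h(s))^{-\gamma}\Vert_{L(H)}\leq C(\tau-s)^{\gamma}$ — is exactly the construction behind those citations, and your estimates are correct on the whole range $\alpha\in[0,1)$.

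There is, however, a genuine gap at the endpoint $\alpha=1$, which the statement includes ($0\leq\alpha\leq 1$) and which your own argument relies on. Your bound for the integral term of \eqref{ae1} uses $\int_s^t(t-\tau)^{-\alpha}\,d\tau\leq C(t-s)^{1-\alpha}$, and for \eqref{ae2} you need $\int_s^t(t-\tau)^{-\alpha}(\tau-s)^{\gamma}\,d\tau\leq C(t-s)^{1-\alpha+\gamma}$; both integrals diverge when $\alpha=1$, so the naive parametrix estimate breaks down precisely there. Uniform boundedness of the kernel cannot repair this: in Pazy's proof of the corresponding endpoint estimates (Theorem 6.1 of Chapter 5) one additionally needs H\"{o}lder continuity of $\tau\mapsto R^h(\tau,s)$ and the splitting $R^h(\tau,s)=\left[R^h(\tau,s)-R^h(t,s)\right]+R^h(t,s)$, none of which your proposal establishes. (Within this paper's framework an alternative is to take $\alpha=1$ from \lemref{pazylemma}(i)--(ii), writing $A_h(r)U_h(t,s)=A_h(r)(A_h(t))^{-1}\,A_h(t)U_h(t,s)$, where the first factor is uniformly bounded by \lemref{lemma2}(iv), and interpolating in $\gamma$ for \eqref{ae2}; but then part (ii) is no longer self-contained as you present it.) The gap propagates: your proof of part (iii) invokes \eqref{ae2} with $\alpha=1$, and your proof of \eqref{hen2} needs the mirrored endpoint estimate, which is not even stated in the lemma. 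Part (iii) is easily repaired without any endpoint estimate: from \eqref{ref6},
\begin{eqnarray*}
\left(U_h(t,s)-\mathbf{I}\right)(A_h(s))^{-\gamma}=\left(S^h_s(t-s)-\mathbf{I}\right)(A_h(s))^{-\gamma}+\int_s^t S^h_{\tau}(t-\tau)R^h(\tau,s)(A_h(s))^{-\gamma}\,d\tau,
\end{eqnarray*}
where \eqref{smooth1} bounds the first term by $C(t-s)^{\gamma}$ and your weighted kernel bound gives $C(t-s)^{1+\gamma}$ for the second; \eqref{hen2} follows analogously using \lemref{lemma2}(i). So: correct skeleton and correct sub-one range, but the $\alpha=1$ assertions are not proved by the argument as written, and they are not dispensable for the lemma as stated.
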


The following space and time regularity for the mild solution of the semi-discrete problem \eqref{semi1} 
will be useful in our convergence analysis. Their proofs can be found in \cite{Antjd3,Antjd4}.
\begin{lemma}
\label{lemma3}
\begin{itemize}
\item[(1)]
Let Assumptions \ref{assumption1}, \ref{assumption2} (i)-(ii), \ref{assumption3} and \ref{assumption4} be fulfilled.  Let $X^h(t)$
be the mild solution of \eqref{semi1} for multiplicative noise.
\begin{itemize}
\item[(i)] If  $0\leq\beta<1$, then for all $\gamma\in[0,\beta]$ the following estimates hold
\begin{eqnarray}
\label{reduit1}
\Vert (A_h(\tau))^{\frac{\gamma}{2}}X^h(t)\Vert_{L^2(\Omega,H)}\leq C\left(1+\Vert \left(A(0)\right)^{\frac{\gamma}{2}} X_0\Vert_{L^2(\Omega, H)}\right),\; 0\leq t,\tau\leq T,\\
\label{reduit1a}
\Vert X^h(\tau)-X^h(r)\Vert_{L^2(\Omega,H)}\leq C(\tau-r)^{\frac{\beta}{2}}\left(1+\Vert X_0\Vert_{L^2(\Omega, H)}\right),\; 0\leq r\leq \tau\leq T.
\end{eqnarray}
\item[(ii)] If $1\leq\beta<2$ and if in addition \assref{assumption5} is fulfilled, then \eqref{reduit1} holds for any $\gamma\in[0,\beta]$ and \eqref{reduit1a} becomes
\begin{eqnarray*}
\Vert X^h(t_2)-X^h(t_1)\Vert_{L^2(\Omega,H)}&\leq &C(t_2-t_1)^{\frac{1}{2}}\left(1+\Vert X_0\Vert_{L^2(\Omega, H)}\right),\quad 0\leq t_1\leq t_2\leq T.\nonumber
\end{eqnarray*}
\end{itemize}
\item[(2)] Let Assumptions  \ref{assumption1}, \ref{assumption2}, \ref{assumption3}, \ref{assumption6} and \ref{assumption7}
be fulfilled. Let $X^h(t)$ be the mild solution of \eqref{semi1} with additive noise and $\gamma\in[0, \beta)$.
Then the following space and time  regularities hold
\begin{eqnarray*}
\Vert (A_h(\tau))^{\frac{\gamma}{2}}X^h(t)\Vert_{L^2(\Omega,H)}&\leq& C\left(1+\Vert \left(A(0)\right)^{\frac{\gamma}{2}} X_0\Vert_{L^2(\Omega, H)}\right),\quad 0\leq t, \tau\leq T,\\
\Vert X^h(t_2)-X^h(t_1)\Vert_{L^2(\Omega,H)}&\leq &C(t_2-t_1)^{\frac{\min(\beta,1)}{2}}\left(1+\Vert X_0\Vert_{L^2(\Omega, H)}\right),\quad 0\leq t_1\leq t_2\leq T.\nonumber
\end{eqnarray*}
\end{itemize}
\end{lemma}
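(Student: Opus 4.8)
The plan is to argue directly from the semi-discrete mild-solution representation \eqref{mild4}, estimating the three terms (initial data, deterministic convolution, stochastic convolution) separately and feeding the smoothing and norm-equivalence estimates of \lemref{lemma2} and \lemref{evolutionlemma} into It\^o's isometry \eqref{ito}. First I would establish the base moment bound, i.e. the case $\gamma=0$. Applying the uniform boundedness \eqref{reste2} to the initial term, the Cauchy--Schwarz inequality together with \assref{assumption3} to the deterministic convolution, and It\^o's isometry \eqref{ito} with \assref{assumption4} to the stochastic convolution, one gets an inequality of the form $\EE\Vert X^h(t)\Vert^2\le C\left(1+\EE\Vert X_0\Vert^2\right)+C\int_0^t\EE\Vert X^h(s)\Vert^2\,ds$, and Gronwall's lemma yields $\sup_{0\le t\le T}\EE\Vert X^h(t)\Vert^2<\infty$. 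This a priori bound is what makes the subsequent fractional estimates self-contained.

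Next, for the space regularity \eqref{reduit1} I would apply $(A_h(\tau))^{\gamma/2}$ to \eqref{mild4}. The initial term is controlled by \eqref{ae2} (with $\alpha=\gamma/2$, $s=0$) and \lemref{lemma2}(iii) to pass from $A_h(\tau)$ to $A(0)$. For the stochastic term, It\^o's isometry reduces matters to $\int_0^t\Vert(A_h(\tau))^{\gamma/2}U_h(t,s)\Vert_{L(H)}^2\,\EE\Vert B(s,X^h(s))\Vert_{L^0_2}^2\,ds$, and \eqref{ae1} supplies the factor $(t-s)^{-\gamma}$, which is integrable precisely when $\gamma<1$; combined with \assref{assumption4} and the base moment bound this settles all of case (i) and case (2) for the range $\gamma\in[0,\min(\beta,1))$. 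To reach $\gamma\in[1,\beta]$ in case (ii) one cannot keep the full power on the semigroup, so I would split $(A_h(\tau))^{\gamma/2}U_h(t,s)P_hB=\bigl[(A_h(\tau))^{1/2}U_h(t,s)(A_h(s))^{(\gamma-1)/2}\bigr](A_h(s))^{-(\gamma-1)/2}(A_h(s))^{(\gamma-1)/2}P_hB$, using the norm equivalences of \lemref{lemma2} to move the powers among $A_h(\tau),A_h(s),A_h(0)$; the bracketed factor has the integrable singularity $(t-s)^{-1/2}$ by \eqref{ae1}, while the remaining $(\gamma-1)/2\le(\beta-1)/2$ power is absorbed onto $B$ through \assref{assumption5}, at the cost of the already-established regularity of order $\beta-1<1$ of $X^h(s)$. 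This bootstrapping is the structural heart of the argument.

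For the time regularity I would write the increment $X^h(t_2)-X^h(t_1)$ as the sum of the initial-data difference $\bigl(U_h(t_2,0)-U_h(t_1,0)\bigr)P_hX_0$, the convolution differences split into the ``overlap'' integrals on $[0,t_1]$ carrying the factor $U_h(t_2,s)-U_h(t_1,s)=\bigl(U_h(t_2,t_1)-\mathbf I\bigr)U_h(t_1,s)$ and the ``tail'' integrals on $[t_1,t_2]$. The increment estimates \eqref{hen1}--\eqref{hen2} give $\Vert(U_h(t_2,t_1)-\mathbf I)(A_h(t_1))^{-\gamma/2}\Vert\le C(t_2-t_1)^{\gamma/2}$, which I would combine with the space regularity \eqref{reduit1} just proved and with \eqref{ae1} on the tail integrals; balancing these produces the H\"older exponent $\beta/2$ when $\beta<1$ and the saturated exponent $\tfrac12$ when $\beta\ge1$ (and $\min(\beta,1)/2$ in the additive case).

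The additive case (2) is handled the same way, the only change being that the state-independent diffusion $B(s)$ together with the covariance bound of \assref{assumption6} plays the role of \assref{assumption5}: one transfers $(\beta-1)/2$ powers onto $Q^{1/2}$ via $\Vert(A(0))^{(\beta-1)/2}Q^{1/2}\Vert_{\mathcal L_2(H)}<\infty$, which forces the strict restriction $\gamma<\beta$ because the leftover semigroup singularity then has exponent $\gamma-\beta+1$, integrable after squaring only for $\gamma<\beta$. I expect the main obstacle to be bookkeeping of the fractional powers taken at the three different times $\tau$, $s$ and $0$: since the operators $A_h(t)$ are neither self-adjoint nor commuting and no spectral decomposition is available, every transfer of a power $(A_h(t))^{\alpha}$ past $U_h$, $P_h$, or a resolvent must be justified through the norm equivalences of \lemref{lemma2} and the evolution-system smoothing of \lemref{evolutionlemma}, keeping each semigroup singularity exponent strictly below one.
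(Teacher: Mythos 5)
First, a point of information: the paper never proves this lemma itself --- it is stated with the remark that the proofs ``can be found in \cite{Antjd3,Antjd4}'' --- so there is no internal proof to compare against, only the external strategy. Your overall route (mild representation \eqref{mild4}, smoothing estimates from \lemref{lemma2} and \lemref{evolutionlemma}, It\^o's isometry, Gronwall) is indeed the standard one, and your sketch is sound for the base moment bound, for case (1)(i), for the additive case (2) in the open range $\gamma<\beta$, and for the time regularity away from the endpoint cases.

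There is, however, a genuine gap in your treatment of case (1)(ii), i.e.\ $1\le\beta<2$ with $\gamma\in[1,\beta]$. Two problems. First, the displayed factorization $(A_h(\tau))^{\gamma/2}U_h(t,s)P_hB=\bigl[(A_h(\tau))^{1/2}U_h(t,s)(A_h(s))^{(\gamma-1)/2}\bigr](A_h(s))^{-(\gamma-1)/2}(A_h(s))^{(\gamma-1)/2}P_hB$ is not an identity: $A_h(\tau)$ and $A_h(s)$ do not commute with $U_h(t,s)$ or with each other, so a power $\gamma/2$ taken at time $\tau$ cannot be split across the evolution operator into powers taken at the two times; the legitimate move is only to insert $\mathbf{I}=(A_h(s))^{-(\gamma-1)/2}(A_h(s))^{(\gamma-1)/2}$ and invoke \eqref{ae2}. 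Second, and more seriously, even after this repair the bracket obeys $\Vert(A_h(\tau))^{\gamma/2}U_h(t,s)(A_h(s))^{-(\gamma-1)/2}\Vert_{L(H)}\le C(t-s)^{-1/2}$, and inside It\^o's isometry this operator norm is \emph{squared}: the integrand behaves like $(t-s)^{-1}$, whose integral diverges logarithmically. Your phrase ``integrable singularity $(t-s)^{-1/2}$'' overlooks exactly this squaring (which you did account for correctly in the additive case). The natural repair is to transfer the full power $(\beta-1)/2$ --- not $(\gamma-1)/2$ --- onto $B$ via \assref{assumption5}, which leaves the squared singularity $(t-s)^{\beta-1-\gamma}$ and hence covers every $\gamma<\beta$; but the lemma asserts the closed range $\gamma\in[0,\beta]$, and the endpoint $\gamma=\beta$ produces exactly the critical exponent $-1$, so it is unreachable by any argument of the form ``operator-norm smoothing times Hilbert--Schmidt norm''. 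In the autonomous self-adjoint setting the endpoint is rescued by the energy (square-function) identity $\int_0^t\Vert A^{1/2}e^{-sA}v\Vert^2ds\le\tfrac12\Vert v\Vert^2$, and in the present non-autonomous, non-self-adjoint setting one needs an analogous square-function or maximal-regularity estimate for the evolution family $U_h$; nothing of this kind appears in your sketch. The same criticality resurfaces when you try to obtain the exact H\"older exponent $\tfrac12$ in the time regularity of case (ii) at $\beta=1$, where your argument yields only $\tfrac12-\epsilon$. As it stands, your proposal proves a strictly weaker statement than the lemma asserts.
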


\begin{corollary}
\label{corollary1}
As a consequence of \lemref{lemma3}, under Assumptions \ref{assumption1}, \ref{assumption2} (i)-(ii), \ref{assumption3} and \ref{assumption4}, it holds that
\begin{eqnarray}
\Vert X^h(t)\Vert_{L^2(\Omega, H)}\leq C,\quad \Vert F(t,X^h(t))\Vert_{L^2(\Omega,H)}\leq C,\quad \Vert B(t,X^h(t))\Vert_{L^2(\Omega,H)}\leq C,\quad t\in[0,T].\nonumber
\end{eqnarray}
\end{corollary}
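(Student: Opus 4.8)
The plan is to read off all three bounds from \lemref{lemma3}, whose regularity estimates already carry the substantive work; the corollary is then just a specialization to $\gamma=0$ together with the structural assumptions on $F$ and $B$.

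First I would fix $\gamma=0$ in \eqref{reduit1}. Since $(A_h(\tau))^{0}=\mathbf{I}$ and $(A(0))^{0}=\mathbf{I}$, the estimate collapses to
\begin{eqnarray*}
\Vert X^h(t)\Vert_{L^2(\Omega, H)}\leq C\left(1+\Vert X_0\Vert_{L^2(\Omega, H)}\right),\quad 0\leq t\leq T.
\end{eqnarray*}
By \assref{assumption1} we have $X_0\in L^2(\Omega,\mathcal{D}((A(0))^{\frac{\beta}{2}}))\subset L^2(\Omega, H)$, so the right-hand side is a finite constant independent of $t$ and $h$, which is the first claimed bound.

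For the second bound I would split off the value of $F$ at the origin and apply the Lipschitz part of \assref{assumption3},
\begin{eqnarray*}
\Vert F(t, X^h(t))\Vert\leq \Vert F(t, X^h(t))-F(t, 0)\Vert+\Vert F(t, 0)\Vert\leq K_3\Vert X^h(t)\Vert+K_3,
\end{eqnarray*}
where $\Vert F(t,0)\Vert\leq K_3$ is furnished by \assref{assumption3}. Taking the $L^2(\Omega)$ norm and inserting the bound on $\Vert X^h(t)\Vert_{L^2(\Omega,H)}$ just obtained yields $\Vert F(t, X^h(t))\Vert_{L^2(\Omega, H)}\leq C$ uniformly in $t$.

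The third bound follows the identical pattern with \assref{assumption4} replacing \assref{assumption3}, carried out in the $L^0_2$ norm: $\Vert B(t, X^h(t))\Vert_{L^0_2}\leq K_4\Vert X^h(t)\Vert+K_4$, after which the $L^2(\Omega)$ norm and the first bound close the argument. There is no genuine obstacle here, since the difficulty resides entirely in \lemref{lemma3}. The only points deserving attention are reducing the regularity estimate to the $\gamma=0$ case so that the fractional powers disappear, and noting that the constant terms $F(t,0)$ and $B(t,0)$ are controlled uniformly in $t$ by Assumptions \ref{assumption3} and \ref{assumption4} (either directly, or, if one reads those bounds as posited only at a base time, by combining them with the $\frac{\beta}{2}$-H\"{o}lder continuity in time).
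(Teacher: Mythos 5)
Your proposal is correct and matches the paper's own (implicit) argument: the paper states this corollary without a separate proof precisely because it follows from \lemref{lemma3} with $\gamma=0$ together with the Lipschitz/boundedness parts of Assumptions \ref{assumption3} and \ref{assumption4}, which is exactly what you do. Your remark that the bound on $B$ should be read in the $L^0_2$ norm (the paper's $\Vert B(t,X^h(t))\Vert_{L^2(\Omega,H)}$ being a slight abuse of notation) is also accurate.
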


 \begin{lemma}\textbf{[Space error]}\cite{Antjd2}
 \label{lemma4}
 \begin{itemize}
 \item[(1)]
 Let Assumptions \ref{assumption1}, \ref{assumption2} (i)-(ii), \ref{assumption3} and \ref{assumption4} be fulfilled. Let $X(t)$ and $X^h(t)$
 be respectively the mild solution of \eqref{model} and \eqref{semi1} for multiplicative noise.
 \begin{itemize}
 \item[(i)] If $0\leq \beta<1$, then the following space error estimate holds 
 \begin{eqnarray*}
 \Vert X(t)-X^h(t)\Vert_{L^2(\Omega,H)}\leq Ch^{\beta},\quad 0\leq t\leq T.
 \end{eqnarray*}
 \item[(ii)] If $1\leq\beta<2$ and if in addition \assref{assumption5} is fulfilled, then 
 \begin{eqnarray*}
 \Vert X(t)-X^h(t)\Vert_{L^2(\Omega,H)}\leq Ch^{\beta},\quad 0\leq t\leq T.
 \end{eqnarray*}
 \end{itemize}
 \item[(2)] Let Assumptions  \ref{assumption1}, \ref{assumption2}, \ref{assumption3} and  \ref{assumption6} be fulfilled. 
 Let $X(t)$ and $X^h(t)$ be respectively the mild solution of \eqref{model} and \eqref{semi1} for additive noise. Then the following space error holds
 \begin{eqnarray*}
 \Vert X(t)-X^h(t)\Vert_{L^2(\Omega,H)}\leq Ch^{\beta},\quad 0\leq t\leq T.
 \end{eqnarray*}
 \end{itemize}
 \end{lemma}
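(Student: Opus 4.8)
The plan is to compare the two mild representations \eqref{mild2} and \eqref{mild4}, decompose the error into an initial-data part, a drift part and a stochastic part, and close the estimate with a Gronwall argument. Writing
\[X(t)-X^h(t)=\big(U(t,0)-U_h(t,0)P_h\big)X_0+I_F(t)+I_B(t),\]
where $I_F$ and $I_B$ collect the differences of the deterministic and the stochastic integrals, the engine of the whole proof is a deterministic space error estimate for the evolution systems, namely
\[\big\Vert\big(U(t,s)-U_h(t,s)P_h\big)v\big\Vert\le C\,h^{\beta}\,(t-s)^{-(\beta-\gamma)/2}\,\Vert (A(0))^{\gamma/2}v\Vert,\quad 0\le\gamma\le\beta\le 2,\]
together with its time-integrated counterpart used for the stochastic term. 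These are the non-autonomous, non-self-adjoint analogues of the classical Thom\'ee estimates and are precisely the deterministic bounds established in \cite{Antjd2}; since $A(t)$ is time dependent and not self-adjoint they cannot be obtained by spectral decomposition, but follow instead from the Ritz projection of the time-dependent form $a(t)$ combined with the representation \eqref{ref6}, $U_h(t,s)=S^h_s(t-s)+\int_s^t S^h_\tau(t-\tau)R^h(\tau,s)\,d\tau$, matched against the corresponding representation of $U$.

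For the initial-data term I would use the pointwise estimate with $\gamma=\beta$, so that the time singularity disappears and the regularity $X_0\in L^2(\Omega,\mathcal{D}((A(0))^{\beta/2}))$ from \assref{assumption1} yields directly $\Vert(U(t,0)-U_h(t,0)P_h)X_0\Vert_{L^2(\Omega,H)}\le C\,h^{\beta}$. For the drift term I would insert $\pm\int_0^t U_h(t,s)P_hF(s,X(s))\,ds$ and split $I_F$ into $\int_0^t\big(U(t,s)-U_h(t,s)P_h\big)F(s,X(s))\,ds$ and $\int_0^t U_h(t,s)P_h\big(F(s,X(s))-F(s,X^h(s))\big)\,ds$. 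The first piece is bounded by the pointwise estimate with $\gamma=0$, whose singularity $(t-s)^{-\beta/2}$ is integrable for $\beta<2$, together with the uniform bound $\Vert F(s,X(s))\Vert_{L^2(\Omega,H)}\le C$ (from \thmref{theorem1} and \assref{assumption3}); the second piece is controlled by $\Vert U_h(t,s)\Vert_{L(H)}\le C$ (see \eqref{reste2}) and the Lipschitz property of \assref{assumption3}, contributing $C\int_0^t\Vert X(s)-X^h(s)\Vert_{L^2(\Omega,H)}\,ds$.

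For the stochastic term I would apply It\^{o}'s isometry \eqref{ito} and the same insertion of $\pm U_h(t,s)P_hB(s,X(s))$. The Lipschitz part again gives $C\int_0^t\Vert X(s)-X^h(s)\Vert_{L^2(\Omega,H)}^2\,ds$ by \assref{assumption4} and \eqref{reste2}. The delicate part is $\int_0^t\EE\Vert\big(U(t,s)-U_h(t,s)P_h\big)B(s,X(s))\Vert_{L^0_2}^2\,ds$: here one factors $B$ through $(A(0))^{(\beta-1)/2}$ and invokes the time-integrated deterministic estimate
\[\Big(\int_0^t\big\Vert\big(U(t,s)-U_h(t,s)P_h\big)(A(0))^{-(\beta-1)/2}\big\Vert_{L(H)}^2\,ds\Big)^{1/2}\le C\,h^{\beta},\]
which, unlike the naive pointwise smoothing bound, stays valid at the borderline exponent; \assref{assumption5} then supplies $\Vert(A(0))^{(\beta-1)/2}B(s,X(s))\Vert_{L^0_2}\le C$ in the regime $1\le\beta<2$, while for $0\le\beta<1$ the plain bound of \assref{assumption4} already suffices with $\gamma=0$. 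Collecting the three contributions gives
\[\Vert X(t)-X^h(t)\Vert_{L^2(\Omega,H)}^2\le C\,h^{2\beta}+C\int_0^t\Vert X(s)-X^h(s)\Vert_{L^2(\Omega,H)}^2\,ds,\]
and Gronwall's inequality completes part (1).

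The additive case (2) is structurally identical but easier: $B$ is state independent, so the Lipschitz contributions to $I_B$ vanish, and \assref{assumption6}, i.e. $\Vert(A(0))^{(\beta-1)/2}Q^{1/2}\Vert_{\mathcal{L}_2(H)}<\infty$, plays the role of \assref{assumption5} in the stochastic estimate, yielding $C\,h^{\beta}$ directly without Gronwall. The main obstacle throughout is the deterministic evolution-operator error: establishing both the pointwise and the time-integrated bounds for a non-self-adjoint, time-dependent family, and in particular securing square-integrability of the stochastic contribution at the threshold $\beta=1$ and across $1<\beta<2$, where the pointwise smoothing estimate is too weak and the sharper time-integrated estimate from \cite{Antjd2} is indispensable.
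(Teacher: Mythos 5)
The paper itself contains no proof of \lemref{lemma4} --- it is imported verbatim from \cite{Antjd2} --- and your reconstruction follows essentially the same route as that cited proof: subtract the two mild representations \eqref{mild2} and \eqref{mild4}, control the initial-data, drift and noise terms through nonautonomous Thom\'ee-type deterministic error estimates for $U(t,s)-U_h(t,s)P_h$ (pointwise with the $(t-s)^{-(\beta-\gamma)/2}$ singularity for the smooth/non-smooth data ranges, time-integrated at the borderline), apply It\^{o}'s isometry together with \assref{assumption5} (resp. \assref{assumption6}) when $1\leq\beta<2$, and close with Gronwall. You also correctly isolated the genuine technical crux --- that the pointwise smoothing bound with $\gamma=\beta-1$ yields a non-integrable $(t-s)^{-1}$ singularity, so the time-integrated estimate for $\bigl(U(t,s)-U_h(t,s)P_h\bigr)(A(0))^{-(\beta-1)/2}$ is indispensable --- so the proposal is correct and matches the cited argument.
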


  For non commutative operators $H_j$, we introduce the following notation
\begin{eqnarray*}
\prod_{j=l}^kH_j:=\left\{\begin{array}{ll}
H_kH_{k-1}\cdots H_l,\quad \text{if} \quad k\geq l,\\
\mathbf{I},\quad \hspace{2.1cm} \text{if} \quad k<l.
\end{array}
\right.
\end{eqnarray*}

\begin{lemma}\cite{Antjd2}
\label{lemma5}
Let  \assref{assumption2} be fulfilled. Then the following estimate holds
\begin{eqnarray}
\label{comp1}
\left\Vert\left(\prod_{j=l}^me^{-\Delta tA_{h,j}}\right)(A_{h,l})^{\gamma}\right\Vert_{L(H)}\leq Ct_{m+1-l}^{-\gamma}, \quad 0\leq l\leq m\leq M,\quad 0\leq \gamma<1.
\end{eqnarray}
\end{lemma}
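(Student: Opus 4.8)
Proof proposal.

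The plan is to compare the product of frozen semigroups with a single semigroup generated by $A_{h,l}$, the operator evaluated at the base point $t_l$ whose fractional power already stands on the right, and to control the discrepancy by a telescopic sum. Throughout I write $G_j := e^{-\Delta tA_{h,j}}$ and $N := m-l+1$, so that $\prod_{j=l}^m G_j = G_mG_{m-1}\cdots G_l$ (in the convention introduced before \lemref{lemma5}) and $(e^{-\Delta t A_{h,l}})^N = e^{-t_{m+1-l}A_{h,l}}$. First I would record the uniform boundedness of the products: the coercivity \eqref{ellip2} makes each $-A_{h,j}$ dissipative on $H$, whence $\Vert G_j\Vert_{L(H)}\le 1$ and therefore $\Vert\prod_{j=k+1}^m G_j\Vert_{L(H)}\le 1$ for every $k$ (alternatively one compares with the uniformly bounded evolution system $U_h$ of \rmref{evolutionremark}, using \eqref{reste2}). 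This settles the case $\gamma=0$.

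For $\gamma\in(0,1)$ I would split
\[
\Big(\prod_{j=l}^m G_j\Big)(A_{h,l})^\gamma = e^{-t_{m+1-l}A_{h,l}}(A_{h,l})^\gamma + \Big(\prod_{j=l}^m G_j - e^{-N\Delta t A_{h,l}}\Big)(A_{h,l})^\gamma .
\]
The first term is bounded directly by $Ct_{m+1-l}^{-\gamma}$ using the uniform smoothing property \eqref{smooth1} with $\alpha=\gamma$ and $s=t_{m+1-l}$. For the second term I would invoke the telescopic identity
\[
\prod_{j=l}^m G_j - e^{-N\Delta t A_{h,l}} = \sum_{k=l}^m\Big(\prod_{j=k+1}^m G_j\Big)\big(G_k - e^{-\Delta t A_{h,l}}\big)e^{-(k-l)\Delta t A_{h,l}},
\]
in which the $k=l$ summand vanishes.

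The key computation is the estimate of a single summand applied to $(A_{h,l})^\gamma$. Writing the difference through the variation-of-constants formula in the form that keeps the $A_{h,l}$-part on the right,
\[
G_k - e^{-\Delta t A_{h,l}} = \int_0^{\Delta t}e^{-\sigma A_{h,k}}(A_{h,l}-A_{h,k})e^{-(\Delta t-\sigma)A_{h,l}}\,d\sigma ,
\]
the trailing factors merge into $e^{-((k-l+1)\Delta t-\sigma)A_{h,l}}(A_{h,l})^\gamma$, all governed by the single operator $A_{h,l}$. I would then factor $(A_{h,l}-A_{h,k}) = [(A_{h,l}-A_{h,k})(A_{h,l})^{-1}]\,A_{h,l}$, bound the bracket by $C\vert t_l-t_k\vert = C(k-l)\Delta t$ via \lemref{lemma2}(iv), absorb the remaining $(A_{h,l})^{1+\gamma}$ into the merged semigroup through \eqref{smooth1} with $\alpha=1+\gamma$, and use $\Vert e^{-\sigma A_{h,k}}\Vert_{L(H)}\le1$. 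For $k>l$ this yields
\[
\big\Vert (G_k - e^{-\Delta tA_{h,l}})e^{-(k-l)\Delta tA_{h,l}}(A_{h,l})^\gamma\big\Vert_{L(H)} \le C(k-l)\Delta t\int_0^{\Delta t}\big((k-l+1)\Delta t-\sigma\big)^{-(1+\gamma)}d\sigma \le C\Delta t\big((k-l)\Delta t\big)^{-\gamma}.
\]

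Finally I would sum over $k$, using $\Vert\prod_{j=k+1}^m G_j\Vert_{L(H)}\le1$, to get
\[
\Big\Vert\Big(\prod_{j=l}^m G_j - e^{-N\Delta t A_{h,l}}\Big)(A_{h,l})^\gamma\Big\Vert_{L(H)}\le C(\Delta t)^{1-\gamma}\sum_{i=1}^{m-l}i^{-\gamma}\le C\,t_{m-l}^{\,1-\gamma},
\]
where $\gamma<1$ is essential for the convergence $\sum_{i=1}^{m-l}i^{-\gamma}\le C(m-l)^{1-\gamma}$. Since $t_{m-l}\le t_{m+1-l}\le T$, one has $t_{m-l}^{1-\gamma}\le T\,t_{m+1-l}^{-\gamma}$, so this correction is absorbed into the main term and the claim follows. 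The hardest part will be the bookkeeping in the single-summand estimate: arranging the variation-of-constants formula so that the only unbounded factor $(A_{h,l}-A_{h,k})$ is paired with the full inverse $(A_{h,l})^{-1}$, for which \lemref{lemma2}(iv) is available and no fractional commutator estimate is needed, while simultaneously leaving the merged long-time semigroup enough smoothing to carry $(A_{h,l})^{1+\gamma}$ and to render both the $\sigma$-integral and the sum over $k$ convergent to exactly the power $t_{m+1-l}^{-\gamma}$, uniformly in $N=m-l+1$ and in $h$.
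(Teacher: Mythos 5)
Your proof is correct, and a comparison with "the paper's proof" is not really possible: the paper states \lemref{lemma5} as a citation to \cite{Antjd2} and gives no argument for it. The closest in-paper relative is the proof of \lemref{lemma7}, which treats the resolvent analogue $\prod_j S^j_{h,\Delta t}$ by the same general strategy you use: freeze the operator at one time level, telescope the difference, and control each summand by the Lipschitz bound $\Vert (A_{h,l}-A_{h,k})(A_{h,l})^{-1}\Vert_{L(H)}\le C(k-l)\Delta t$ of \lemref{lemma2} plus the smoothing estimate \eqref{smooth1}. Your route differs in two useful ways. First, since the fractional power in \eqref{comp1} sits on the right, your variation-of-constants identity for $e^{-\sigma A_{h,k}}e^{-(\Delta t-\sigma)A_{h,l}}$ keeps every unbounded factor governed by the single operator $A_{h,l}$, so the sum closes directly; the paper's \lemref{lemma7}, with the power on the left, instead lands in a fixed-point structure and needs the discrete Gronwall lemma. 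Second, you correctly recognize that the contraction property $\Vert e^{-\Delta t A_{h,j}}\Vert_{L(H)}\le 1$ (from coercivity \eqref{ellip2}) is indispensable: the generic constant $C\ge 1$ in \eqref{smooth1} would compound to $C^{m-l}$ over the partial products. Two small repairs: in the final absorption, monotonicity $t_{m-l}\le t_{m+1-l}$ alone pushes the inequality the wrong way for negative powers; you need the comparability $t_{m+1-l}\le 2t_{m-l}$ (valid since $m>l$ whenever the correction term is nonzero), giving $t_{m-l}^{1-\gamma}\le 2^{\gamma}T\,t_{m+1-l}^{-\gamma}$. And your single-summand bound degenerates at $k=l$, but as you note that summand vanishes identically, so nothing is lost.
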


\begin{lemma}
\label{lemma6}
Let \assref{assumption2} be fulfilled. Then the following estimate holds
\begin{eqnarray}
\left\Vert(A_{h,k})^{\alpha}(\mathbf{I}+sA_{h,j})^{-n}\right\Vert_{L(H)}\leq C_{\alpha}(ns)^{-\alpha},\quad n>\alpha,\quad s>0,\quad 0\leq j,k\leq M.
\end{eqnarray}
\end{lemma}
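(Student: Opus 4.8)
The plan is to first remove the discrepancy between the two time indices $k$ and $j$, and then to reduce the problem to a single sectorial operator for which the smoothing property is available. Since the relevant range is $\alpha\in[0,1]$, \lemref{lemma2} (ii) gives the equivalence $\Vert (A_{h,k})^{\alpha}v\Vert\leq C\Vert (A_{h,0})^{\alpha}v\Vert\leq C\Vert (A_{h,j})^{\alpha}v\Vert$ for all $v\in V_h$, with a constant independent of $h$, $k$ and $j$. Applying this with $v=(\mathbf{I}+sA_{h,j})^{-n}w$ and taking the supremum over $w\in V_h$ with $\Vert w\Vert\leq 1$, it suffices to establish the bound $\Vert (A_{h,j})^{\alpha}(\mathbf{I}+sA_{h,j})^{-n}\Vert_{L(H)}\leq C(ns)^{-\alpha}$ with a constant uniform in $j$, which is automatic since all constants below originate from \assref{assumption2} and are uniform in $t\in[0,T]$.

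Next I would exploit the fact that $\mathbf{I}+sA_{h,j}$ is a positive sectorial operator whose negative real part is bounded above by $-1$, so that the negative-power representation \eqref{fractional} applies to it. For integer $n\geq 1$ this yields $(\mathbf{I}+sA_{h,j})^{-n}=\frac{1}{\Gamma(n)}\int_0^{\infty}\tau^{n-1}e^{-\tau}e^{-\tau sA_{h,j}}\,d\tau$, the integral converging in $L(H)$ because $e^{-\tau}\,e^{-\tau sA_{h,j}}$ is the exponentially decaying analytic semigroup generated by $-(\mathbf{I}+sA_{h,j})$ (here $\Vert e^{-\tau sA_{h,j}}\Vert_{L(H)}\leq C$ by \eqref{smooth1} with $\alpha=0$). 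Commuting the closed operator $(A_{h,j})^{\alpha}$ inside the integral and invoking the smoothing estimate \eqref{smooth1} in the form $\Vert (A_{h,j})^{\alpha}e^{-\tau sA_{h,j}}\Vert_{L(H)}\leq C_3(\tau s)^{-\alpha}$, I obtain
\begin{eqnarray*}
\Vert (A_{h,j})^{\alpha}(\mathbf{I}+sA_{h,j})^{-n}\Vert_{L(H)}\leq \frac{C_3\,s^{-\alpha}}{\Gamma(n)}\int_0^{\infty}\tau^{n-1-\alpha}e^{-\tau}\,d\tau .
\end{eqnarray*}

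The remaining integral is a Gamma function: since $n>\alpha$ the exponent satisfies $n-1-\alpha>-1$, whence $\int_0^{\infty}\tau^{n-1-\alpha}e^{-\tau}\,d\tau=\Gamma(n-\alpha)$, and the bound becomes $C_3\,s^{-\alpha}\,\Gamma(n-\alpha)/\Gamma(n)$. It then remains to absorb the Gamma ratio, i.e. to show $\Gamma(n-\alpha)/\Gamma(n)\leq C_{\alpha}\,n^{-\alpha}$ uniformly in the admissible integers $n$. This follows from the asymptotic $\Gamma(n-\alpha)/\Gamma(n)\sim n^{-\alpha}$ as $n\to\infty$ (so that $n^{\alpha}\Gamma(n-\alpha)/\Gamma(n)$ tends to $1$) together with the finiteness of this quantity for each of the finitely many small values of $n$; alternatively one may quote Gautschi's inequality $\Gamma(n-\alpha)/\Gamma(n)\leq (n-1)^{-\alpha}$ for $n\geq 2$ and treat $n=1$ separately. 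Combining the three displays yields $\Vert (A_{h,k})^{\alpha}(\mathbf{I}+sA_{h,j})^{-n}\Vert_{L(H)}\leq C_{\alpha}(ns)^{-\alpha}$, which is the claim.

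The main obstacle is not the scalar computation but the two structural points that must replace the spectral decomposition available in the self-adjoint constant-coefficient setting: first, justifying the Laplace-type representation and the commutation of $(A_{h,j})^{\alpha}$ for the non-self-adjoint operator $A_{h,j}$, which is legitimate precisely because $-(\mathbf{I}+sA_{h,j})$ generates an exponentially stable analytic semigroup; and second, the uniformity in $j$ and $k$, which hinges on the norm equivalence of \lemref{lemma2} and on all sectoriality and smoothing constants being uniform in $t\in[0,T]$ by virtue of \assref{assumption2}. These are exactly the features that make the estimate delicate in the non-autonomous case, where the resolvent operators change at each time step.
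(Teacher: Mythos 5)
Your proof is correct, but it follows a genuinely different route from the paper, whose entire proof is one sentence: reduce to a fixed operator via \assref{assumption2} (iii) and then declare that the estimate "follows the same lines as \cite[(6.6)]{Mizutani}", i.e.\ it leans on the resolvent-calculus (Dunford integral) argument of Fujita--Mizutani for sectorial operators. You perform the same first reduction (the norm equivalence of \lemref{lemma2}, which indeed confines $\alpha$ to $[0,1]$ --- harmless, since every application of the lemma in the paper has $\alpha\leq 1$), but then you replace the citation by a self-contained derivation: the Laplace representation $(\mathbf{I}+sA_{h,j})^{-n}=\frac{1}{\Gamma(n)}\int_0^{\infty}\tau^{n-1}e^{-\tau}e^{-\tau sA_{h,j}}\,d\tau$, legitimate because $-(\mathbf{I}+sA_{h,j})$ generates an exponentially decaying analytic semigroup uniformly in $h$ and $j$ by \eqref{smooth1}; the interchange of the closed operator $(A_{h,j})^{\alpha}$ with the absolutely convergent Bochner integral; the uniform smoothing bound of \eqref{smooth1}; and finally Gautschi's inequality to absorb $\Gamma(n-\alpha)/\Gamma(n)\leq C_{\alpha}n^{-\alpha}$ (with $n=1$, $\alpha<1$ handled separately, consistent with the constant $C_\alpha$ degenerating as $\alpha\to 1$ there). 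What your version buys is transparency: it makes explicit exactly where uniformity in $h$, $j$, $k$, $t$ enters (only through \eqref{smooth1} and \lemref{lemma2}) and avoids contour integration entirely; what it gives up is generality, since it covers only integer $n$ --- which is all the paper ever needs, $n$ being the number of resolvent factors --- whereas the cited resolvent-calculus argument does not care whether $n$ is an integer.
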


\begin{proof}
Due to \assref{assumption2} (iii), the proof follows  the same lines as \cite[(6.6)]{Mizutani}.
\end{proof}

The following lemma will be useful in our convergence analysis.
\begin{lemma}
\label{lemma7}
Let \assref{assumption2} be fulfilled. 
\begin{itemize}
\item[(i)]
For any $\alpha\in[0,1)$, it holds that
\begin{eqnarray}
\label{disop}
\left\Vert (A_{h,k})^{\alpha}\left(\prod_{j=i}^mS^j_{h,\Delta t}\right)\right\Vert_{L(H)}\leq Ct_{m-i+1}^{-\alpha},\quad 0\leq i\leq m\leq M,\quad 0\leq k\leq M.
\end{eqnarray}
\item[(ii)]
For any $\alpha_1,\alpha_2\in[0,1)$, it holds that
\begin{eqnarray*}
\left\Vert (A_{h,k})^{\alpha_1}\left(\prod_{j=i}^mS^j_{h,\Delta t}\right)(A_{h,i})^{-\alpha_2}\right\Vert_{L(H)}\leq Ct_{m-i+1}^{-\alpha_1+\alpha_2},\quad 0\leq i\leq m\leq M,\quad 0\leq k\leq M.
\end{eqnarray*}
\item[(iii)] For any $\alpha_1, \alpha_2\in[0, 1)$ and any $1\leq k,l,j\leq M$, the following estimate holds 
\begin{eqnarray*}
\left\Vert (A_{h,k})^{-\alpha_1}\left(S^j_{h,\Delta t}-S^{j-1}_{h, \Delta t}\right)(A_{h,l})^{-\alpha_2}\right\Vert_{L(H)}\leq C\Delta t^2t_j^{-1+\alpha_1+\alpha_2}\leq C\Delta t^{1+\alpha_1+\alpha_2}.
\end{eqnarray*}
\end{itemize}
\end{lemma}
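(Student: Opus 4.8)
The plan is to treat the three parts in increasing order of difficulty, with part (i) carrying essentially all of the analytic weight and (ii)--(iii) following as refinements. In every estimate I would first use the norm equivalences of \lemref{lemma2} to replace the fractional power $(A_{h,k})^{\alpha}$ by the power of the operator sitting at the relevant end of the product, so that the index $k$ becomes immaterial and one may take $k=i$ (resp. $k=m$). The guiding idea is to transfer the smoothing already established for the \emph{exponential} products in \lemref{lemma5} onto the genuinely non-autonomous resolvent product $\prod_{j=i}^{m}S^{j}_{h,\Dt}$, absorbing the non-commutativity and the loss of self-adjointness through telescopic sums rather than through a spectral decomposition, which is unavailable here.

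For part (i) I would expand the discrepancy between the resolvent product and the exponential product as a telescopic sum in which exactly one factor is switched at a time,
\begin{eqnarray*}
\prod_{j=i}^{m}S^{j}_{h,\Dt}-\prod_{j=i}^{m}e^{-\Dt A_{h,j}}=\sum_{p=i}^{m}\left(\prod_{j=p+1}^{m}S^{j}_{h,\Dt}\right)\left(S^{p}_{h,\Dt}-e^{-\Dt A_{h,p}}\right)\left(\prod_{j=i}^{p-1}e^{-\Dt A_{h,j}}\right).
\end{eqnarray*}
Here the exponential product carries its smoothing through \lemref{lemma5}, the surviving resolvent product is controlled by uniform boundedness together with \lemref{lemma6} and an inductive hypothesis, and the single switched factor is estimated by the classical rational-versus-exponential bound $\Vert(S^{p}_{h,\Dt}-e^{-\Dt A_{h,p}})(A_{h,p})^{-\nu}\Vert_{L(H)}\leq C\Dt^{\nu}$, $0\leq\nu\leq 2$, whose \emph{second-order} accuracy is the source of the extra powers of $\Dt$. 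Applying $(A_{h,m})^{\alpha}$ in front and distributing the powers across the surviving factors via \lemref{lemma2}, each summand becomes a product of a front smoothing factor, the gain $\Dt^{\nu}$, and a back smoothing factor.

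The hard part will be the summation of this expansion. The individual terms are only borderline summable: a naive split that loads the whole power $\alpha$ onto the front product and only $\Dt^{\nu}$ onto the switched factor loses a factor $(m-i)^{1-\nu}$ and does not reproduce the sharp rate, so one must genuinely exploit the second-order accuracy above and share the smoothing between \emph{both} ends. The scheme closes only if the exponents are arranged so that the sum becomes a discrete convolution of the form $\Dt\sum_{p}t_{m-p+1}^{-\theta_1}t_{p-i+1}^{-\theta_2}$ that is uniformly bounded (a discrete Beta integral) and collapses to the target $t_{m-i+1}^{-\alpha}$; the boundary indices $p\approx i$ and $p\approx m$, where one surviving product is empty, have to be treated separately, and a simultaneous induction on the length $m-i$ is needed to keep the constant independent of $h$, $\Dt$ and the time indices. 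This balancing, which replaces the eigenfunction computation of \cite[Theorem 7.8]{Thomee}, is the genuine obstacle; the case $\alpha=0$ already yields the non-obvious uniform boundedness of $\prod_{j=i}^{m}S^{j}_{h,\Dt}$, which is not automatic since the sectoriality constant in \eqref{sectorial1} may exceed one.

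Once (i) is available the remaining parts are short. For part (ii) I would represent the negative power through the subordination formula \eqref{fractional}, $(A_{h,i})^{-\alpha_2}=\frac{1}{\Gamma(\alpha_2)}\int_{0}^{\infty}s^{\alpha_2-1}e^{-sA_{h,i}}\,ds$, so that $e^{-sA_{h,i}}$ merges with the rightmost factor of the product and simply lengthens the smoothing interval; part (i) then gives $\Vert(A_{h,k})^{\alpha_1}(\prod_{j=i}^{m}S^{j}_{h,\Dt})e^{-sA_{h,i}}\Vert_{L(H)}\leq C(t_{m-i+1}+s)^{-\alpha_1}$, and integrating the weight $s^{\alpha_2-1}$ against $(t_{m-i+1}+s)^{-\alpha_1}$ produces the Beta integral $C\,t_{m-i+1}^{-\alpha_1+\alpha_2}$, exactly the claimed exponent (the case $\alpha_2\geq\alpha_1$ being covered by plain boundedness). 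Finally, for part (iii) I would use only the single resolvent identity $S^{j}_{h,\Dt}-S^{j-1}_{h,\Dt}=\Dt\,S^{j}_{h,\Dt}(A_{h,j-1}-A_{h,j})S^{j-1}_{h,\Dt}$, which already exhibits one factor $\Dt$; the Lipschitz bound of \lemref{lemma2}(iv) on $A_{h,j-1}-A_{h,j}$ supplies a second factor $\Dt$ at the cost of one power of the operator, and distributing the negative powers $(A_{h,k})^{-\alpha_1}$ and $(A_{h,l})^{-\alpha_2}$ onto the two resolvents through \lemref{lemma2} and \lemref{lemma6} absorbs that power with the stated homogeneity, giving the estimate $C\Dt^{2}t_{j}^{-1+\alpha_1+\alpha_2}\leq C\Dt^{1+\alpha_1+\alpha_2}$ once $t_j\geq\Dt$ is used.
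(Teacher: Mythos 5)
Your part (iii) is essentially the paper's own argument (the resolvent identity $S^j_{h,\Dt}-S^{j-1}_{h,\Dt}=\Dt\,S^j_{h,\Dt}(A_{h,j-1}-A_{h,j})S^{j-1}_{h,\Dt}$ plus \lemref{lemma2} and \lemref{lemma6}), but parts (i) and (ii) have genuine gaps.

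For (i), your telescoping against the exponential product forces the ``back smoothing factor'' to be a fractional power on the \emph{left} (latest-time) end of an exponential product of varying operators, i.e. a bound of the form $\Vert (A_{h,p})^{\theta}\prod_{j=i}^{p-1}e^{-\Dt A_{h,j}}\Vert_{L(H)}\leq C t_{p-i}^{-\theta}$. \lemref{lemma5} only gives the mirror-image estimate with the power on the right (adjacent to the earliest factor), and you cannot transport the power across the product with \lemref{lemma2}: conjugating factor by factor costs a fixed constant per factor and produces $C^{\,p-i}$. This missing estimate is not a technicality, because your own accounting shows the scheme closes only if the $\Dt^{\mu_1+\theta_2}$ gain of the switched factor, with $\mu_1+\theta_2>1$, is shared between front smoothing $t_{m-p}^{-\alpha-\mu_1}$ \emph{and} back smoothing $t_{p-i}^{-\theta_2}$; with front smoothing alone the sum loses a factor $(m-i)^{1-\mu_1}$ (or a logarithm when $\mu_1=1$), exactly the obstruction you flag but do not remove. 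Note also that in your expansion the switched factor $S^p_{h,\Dt}-e^{-\Dt A_{h,p}}$ involves a single operator, so the Lipschitz condition of \assref{assumption2}(ii) never enters your argument directly --- yet it is the engine of the paper's proof: comparing instead with the \emph{frozen} product $(\mathbf{I}+\Dt A_{h,i})^{-(m-i+1)}$ makes the switched difference equal to $A_{h,i}-A_{h,j+i+1}$, whose Lipschitz bound $Ct_{j+1}$ exactly cancels the $t_{j+1}^{-1}$ from $\Vert A_h(0)(\mathbf{I}+\Dt A_{h,i})^{-j-1}\Vert_{L(H)}$ (see \eqref{ident4b}), so every term is $O(\Dt)$ uniformly, no back smoothing is needed, and the self-referential structure of \eqref{ident2} (the unknown discrepancies $\Delta^h_{m,j}$ reappear inside their own expansion) allows a discrete Gronwall closure. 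Your expansion is explicit rather than self-referential, so Gronwall is unavailable and the whole burden falls on a convolution that does not close.

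For (ii), the subordination reduction is circular in the tail. From part (i) you only get $\Vert(A_{h,k})^{\alpha_1}(\prod_{j=i}^m S^j_{h,\Dt})e^{-sA_{h,i}}\Vert_{L(H)}\leq Ct_{m-i+1}^{-\alpha_1}\Vert e^{-sA_{h,i}}\Vert_{L(H)}$; obtaining the claimed $C(t_{m-i+1}+s)^{-\alpha_1}$ in the regime $s\gg t_{m-i+1}$ requires moving $(A_{h,k})^{\alpha_1}$ through the non-commuting resolvent product onto the semigroup, which is precisely the part (ii) estimate being proved. With the bound you actually have, the head of the integral gives the right rate, but the tail contributes $\int_{t}^{\infty}s^{\alpha_2-1}t^{-\alpha_1}e^{-cs}\,ds\sim Ct^{-\alpha_1}$, which exceeds the target $Ct^{-\alpha_1+\alpha_2}$ by the factor $t^{-\alpha_2}$ when $t=t_{m-i+1}$ is small. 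The paper avoids this entirely: for the frozen product, $(A_{h,i})^{-\alpha_2}$ \emph{commutes} with $(\mathbf{I}+\Dt A_{h,i})^{-(m-i+1)}$ so \lemref{lemma6} applies directly, and the discrepancy term is handled by \eqref{ident2} together with part (i) and the uniform boundedness of $(A_{h,i})^{-\alpha_2}$. (Your parenthetical that $\alpha_2\geq\alpha_1$ follows from ``plain boundedness'' is also incorrect --- boundedness gives $Ct^{-\alpha_1}$, not $Ct^{\alpha_2-\alpha_1}$ --- but that regime is never used in the paper.)
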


\begin{proof}  
Note that the proof in the case $i=m$ is straightforward. We only concentrate on the case $i<m$. 
The main idea is to compare the discrete evolution operator in \eqref{disop} with the following frozen operator
\begin{eqnarray*}
\prod_{j=i}^mS^i_{h,\Delta t}=\left(\mathbf{I}+\Delta tA_{h,i}\right)^{-(m-i+1)}.
\end{eqnarray*} 
\begin{itemize}
\item[(i)]
Using Lemmas \ref{lemma2}  and \ref{lemma6}, it holds that
\begin{eqnarray*}
\left\Vert (A_{h,k})^{\alpha}(\mathbf{I}+\Delta tA_{h,i})^{-(m-i+1)}\right\Vert_{L(H)}\leq  Ct_{m-i+1}^{-\alpha}.
\end{eqnarray*}
It remains to estimate $(A_{h,k})^{\alpha}\Delta^h_{m,i}$, where
\begin{eqnarray}
\label{fuj}
\Delta_{m,i}^h:=\prod_{j=i}^mS^j_{h,\Delta t}-(S^i_{h,\Delta t})^{m-i+1}.
\end{eqnarray}
One can easily check that the following resolvent identity holds
\begin{eqnarray}
\label{ident1}
&&(\mathbf{I}+\Delta tA_{h,j+1})^{-1}-(\mathbf{I}+\Delta tA_{h,i})^{-1}\nonumber\\
&=&\Delta t(\mathbf{I}+\Delta tA_{h,j+1})^{-1}(A_{h,i}-A_{h,j+1})(\mathbf{I}+\Delta tA_{h,i})^{-1}.
\end{eqnarray}
Using the telescopic sum, it holds that
\begin{eqnarray}
\label{teles}
\Delta_{m,i}^h&=&\sum_{j=0}^{m-i-1}\left(\prod_{k=j+i+1}^mS^k_{h,\Delta t}\right)(\mathbf{I}+\Delta tA_{h,j+i+1})\left[(\mathbf{I}+\Delta tA_{h,j+i+1})^{-1}-(\mathbf{I}+\Delta tA_{h,i})^{-1}\right]\nonumber\\
&&.(\mathbf{I}+\Delta tA_{h,i})^{-j-1}.
\end{eqnarray}
Substituting the identity \eqref{ident1} in \eqref{teles} and rearranging, we obtain
\begin{eqnarray}
\label{ident2}
\Delta_{m,i}^h&=&\Delta t\sum_{j=0}^{m-i-1}\left(\prod_{k=j+i+1}^mS^k_{h,\Delta t}\right)(A_{h,i}-A_{h,j+i+1})(\mathbf{I}+\Delta tA_{h,i})^{-j-2}\nonumber\\
&=&\Delta t\sum_{j=0}^{m-i-1}\Delta^h_{m,j+i+1}(A_{h,i}-A_{h,j+i+1})(\mathbf{I}+\Delta tA_{h,i})^{-j-2}\nonumber\\
&+&\Delta t\sum_{j=0}^{m-i-1}(\mathbf{I}+\Delta tA_{h,j+i+1})^{-(m-j-i)}(A_{h,i}-A_{h,j+i+1})(\mathbf{I}+\Delta tA_{h,i})^{-j-2}.
\end{eqnarray}
Therefore multiplying both sides of \eqref{ident2} by $(A_{h,k})^{\alpha}$ yields
\begin{eqnarray}
\label{ident3}
&&(A_{h,k})^{\alpha}\Delta_{m,i}^h\nonumber\\
&=&\Delta t\sum_{j=0}^{m-i-1}(A_{h,k})^{\alpha}\Delta^h_{m,j+i+1}(A_{h,i}-A_{h,j+i+1})(\mathbf{I}+\Delta tA_{h,i})^{-j-2}\nonumber\\
&+&\Delta t\sum_{j=0}^{m-i-1}(A_{h,k})^{\alpha}(\mathbf{I}+\Delta tA_{h,j+i+1})^{-(m-j-i)}(A_{h,i}-A_{h,j+i+1})(\mathbf{I}+\Delta tA_{h,i})^{-j-2}.
\end{eqnarray}
Taking the norm in both sides of \eqref{ident3},  using triangle inequality, \lemref{lemma6} and \assref{assumption2} yields
\begin{eqnarray}
\label{ident4a}
&&\Vert (A_{h,k})^{\alpha}\Delta^h_{m,i}\Vert_{L(H)}\nonumber\\
&\leq &C\Delta t\sum_{j=0}^{m-i-1}\Vert (A_{h,k})^{\alpha}\Delta^h_{m,j+i+1}\Vert_{L(H)}\Vert (A_{h,i}-A_{h, j+i+1})(\mathbf{I}+\Delta tA_{h,i})^{-j-2}\Vert_{L(H)}\nonumber\\
&+&C\Delta t\sum_{j=0}^{m-i-1}t_{m-j-i}^{-\alpha}\Vert (A_{h,i}-A_{h, j+i+1})(\mathbf{I}+\Delta tA_{h,i})^{-j-2}\Vert_{L(H)}.
\end{eqnarray}
Employing Lemmas \ref{lemma2} and \ref{lemma6}  yields
\begin{eqnarray}
\label{ident4b}
&&\Vert (A_{h,i}-A_{h, j+i+1})(\mathbf{I}+\Delta tA_{h,i})^{-j-2}\Vert_{L(H)}\nonumber\\
&\leq& \Vert (A_{h,i}-A_{h, j+i+1})(A_h(0))^{-1}\Vert_{L(H)}\Vert A_h(0)(\mathbf{I}+\Delta tA_{h,i})^{-j-1}\Vert_{L(H)}\Vert (\mathbf{I}+\Delta tA_{h,i})^{-1}\Vert_{L(H)}\nonumber\\
&\leq& Ct_{j+1}t_{j+1}^{-1}=C.
\end{eqnarray}
Substituting \eqref{ident4b} in \eqref{ident4a} and using the fact that $C\Delta t\sum\limits_{j=0}^{m-i-1}t_{m-j-1}^{-\alpha}\leq C$ yields
\begin{eqnarray}
\label{ident4}
\Vert (A_{h,k})^{\alpha}\Delta^h_{m,i}\Vert_{L(H)}\leq C+C\Delta t\sum_{j=i+1}^{m}\Vert (A_{h,k})^{\alpha}\Delta^h_{m,j}\Vert_{L(H)}.
\end{eqnarray}
Applying the discrete Gronwall lemma to \eqref{ident4} yields
\begin{eqnarray}
\Vert (A_{h,k})^{\alpha}\Delta^h_{m,i}\Vert_{L(H)}\leq C.
\end{eqnarray}
This completes the proof of (i).

\item[(ii)]
Using Lemmas \ref{lemma2}  and \ref{lemma6}, we obtain
\begin{eqnarray}
\label{ident5}
&&\left\Vert (A_{h,k})^{\alpha_1}(\mathbf{I}+\Delta tA_{h,i})^{-(m-i+1)}(A_{h,i})^{-\alpha_2}\right\Vert_{L(H)}\nonumber\\
&\leq& C\left\Vert (A_{h,i})^{\alpha_1}(\mathbf{I}+\Delta tA_{h,i})^{-(m-i+1)}(A_{h,i})^{-\alpha_2}\right\Vert_{L(H)}\nonumber\\
&=&\left\Vert (A_{h,i})^{\alpha_1-\alpha_2}(\mathbf{I}+\Delta tA_{h,i})^{-(m-i+1)}\right\Vert_{L(H)}\nonumber\\
&\leq & Ct_{m-i+1}^{-\alpha_1+\alpha_2}.
\end{eqnarray}
It remains to estimate $(A_{h,k})^{\alpha_1}\Delta^h_{m,i}(A_{h,i})^{-\alpha_2}$, where $\Delta^h_{m,i}$ is defined by \eqref{fuj}.
From \eqref{ident2}, it holds that
\begin{eqnarray}
\label{ident6}
&&(A_{h,k})^{\alpha_1}\Delta_{m,i}^h(A_{h,i})^{-\alpha_2}\\
&=&\Delta t\sum_{j=0}^{m-i-1}(A_{h,k})^{\alpha_1}\Delta^h_{m,j+i+1}(A_{h,i}-A_{h,j+i+1})(\mathbf{I}+\Delta tA_{h,i})^{-j-2}(A_{h,i})^{-\alpha_2}\nonumber\\
&+&\Delta t\sum_{j=0}^{m-i-1}(A_{h,k})^{\alpha_1}(\mathbf{I}+\Delta tA_{h,j+i+1})^{-(m-j-i)}(A_{h,i}-A_{h,j+i+1})(\mathbf{I}+\Delta tA_{h,i})^{-j-2}(A_{h,i})^{-\alpha_2}.\nonumber
\end{eqnarray}
Taking the norm in both sides of \eqref{ident6}, using the triangle inequality, \lemref{lemma6}, \eqref{ident4b}, \lemref{lemma7} (i) and the fact that $(A_{h,i})^{-\alpha_2}$ is uniformly bounded yields 
\begin{eqnarray}
\label{ident7}
\Vert(A_{h,k})^{\alpha_1}\Delta_{m,i}^h(A_{h,i})^{-\alpha_2}\Vert_{L(H)}
&\leq&C\Delta t\sum_{j=0}^{m-i-1}\Vert(A_{h,k})^{\alpha_1}\Delta^h_{m,j+i+1}\Vert_{L(H)}\nonumber\\
&+&C\Delta t\sum_{j=0}^{m-i-1}\Vert(A_{h,k})^{\alpha_1}(\mathbf{I}+\Delta tA_{h,j+i+1})^{-(m-j-i)}\Vert_{L(H)}\nonumber\\
&\leq& C\Delta t\sum_{j=0}^{m-i-1}t_{m-j-i}^{-\alpha_1}+C\Delta t\sum_{j=0}^{m-i-1}t_{m-j-i}^{-\alpha_1}\nonumber\\
&\leq& C.
\end{eqnarray}
From \eqref{fuj}, employing \eqref{ident5} and \eqref{ident7} yields
\begin{eqnarray*}
&&\left\Vert(A_{h,k})^{\alpha_1}\left(\prod_{j=i}^mS^j_{h,\Delta t}\right)(A_{h,i})^{-\alpha_2}\right\Vert_{L(H)}\nonumber\\
&\leq& \left\Vert (A_{h,k})^{\alpha_1}\Delta^h_{m,i}(A_{h,i})^{-\alpha_2}\right\Vert_{L(H)}+\left\Vert(A_{h,k})^{\alpha_1}(\mathbf{I}+\Delta tA_{h,i})^{-(m-i+1)}\right\Vert_{L(H)}\nonumber\\
&\leq& C+Ct_{m-i+1}^{-\alpha_1+\alpha_2}\leq Ct_{m-i+1}^{-\alpha_1+\alpha_2}.
\end{eqnarray*}
This  completes the proof of (ii).
\item[(iii)] Using the identity \eqref{ident1}, it holds that
\begin{eqnarray}
\label{arrangement1}
 &&(A_{h,k})^{-\alpha_1}\left(S^j_{h,\Delta t}-S^{j-1}_{h, \Delta t}\right)(A_{h,l})^{-\alpha_2}\nonumber\\
 &=&\Delta t(A_{h,k})^{-\alpha_1}(\mathbf{I}+\Delta tA_{h,j})^{-1}(A_{h,j-1}-A_{h,j})(\mathbf{I}+\Delta tA_{h,j-1})^{-1}(A_{h,l})^{-\alpha_2}.
\end{eqnarray}
Taking the norm in both sides of \eqref{arrangement1}, employing Lemmas \ref{lemma2} and \ref{lemma6} yields
\begin{eqnarray}
\label{arrangement2}
 &&\left\Vert(A_{h,k})^{-\alpha_1}\left(S^j_{h,\Delta t}-S^{j-1}_{h, \Delta t}\right)(A_{h,l})^{-\alpha_2}\right\Vert_{L(H)}\nonumber\\
 &\leq&\Delta t\left\Vert(\mathbf{I}+\Delta tA_{h,j})^{-1}(A_{h,j})^{1-\alpha_1-\alpha_2}\right\Vert_{L(H)}\left\Vert (A_{h,j})^{-1}(A_{h,j-1}-A_{h,j})\right\Vert_{L(H)}\nonumber\\
 &&\times\left\Vert(\mathbf{I}+\Delta tA_{h,j-1})^{-1}\right\Vert_{L(H)}\nonumber\\
 &\leq& C\Delta t^{2}t_j^{-1+\alpha_1+\alpha_2}\leq C\Delta t^{1+\alpha_1+\alpha_2}.
\end{eqnarray}
This completes the proof of (iii).
\end{itemize}
\end{proof}

The following lemma will be useful to establish error estimates for deterministic problem.
\begin{lemma}
\label{lemma8}
For any  $ \alpha_1,\alpha_2\in[0,1]$, the following estimates hold
\begin{eqnarray}
\label{val1}
\left\Vert (A_{h,k})^{-\alpha_1}\left(e^{-A_{h,j}\Delta t}-S^j_{h,\Delta t}\right)(A_{h,j})^{-\alpha_2}\right\Vert_{L(H)}\leq C\Delta t^{\alpha_1+\alpha_2},\quad 0\leq j,k\leq M,\\
\label{val2}
\left\Vert (A_{h,k})^{\alpha_1}\left(e^{-A_{h,j}\Delta t}-S^j_{h,\Delta t}\right)(A_{h,j})^{-\alpha_2}\right\Vert_{L(H)}\leq C\Delta t^{-\alpha_1+\alpha_2},\quad 0\leq j,k\leq M.
\end{eqnarray}
\end{lemma}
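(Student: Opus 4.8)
The plan is to collapse both estimates to a single statement about the frozen operator $A_{h,j}$ and then prove a Thom\'ee-type bound for it by hand. First I would use the equivalence of fractional-power norms from \lemref{lemma2} to replace the outer factor $(A_{h,k})^{\mp\alpha_1}$ by $(A_{h,j})^{\mp\alpha_1}$ at the cost of a constant uniform in $h,j,k$: applied to the vector $w:=\left(e^{-\Delta tA_{h,j}}-S^j_{h,\Delta t}\right)(A_{h,j})^{-\alpha_2}v$ it shows it suffices to treat the case $k=j$. Since $e^{-\Delta tA_{h,j}}$, $S^j_{h,\Delta t}=(\mathbf{I}+\Delta tA_{h,j})^{-1}$ and every power of $A_{h,j}$ are functions of one operator they commute, so the sandwiched operators collapse to a single power,
\begin{eqnarray*}
(A_{h,j})^{\mp\alpha_1}\left(e^{-\Delta tA_{h,j}}-S^j_{h,\Delta t}\right)(A_{h,j})^{-\alpha_2}=(A_{h,j})^{\rho}\left(e^{-\Delta tA_{h,j}}-S^j_{h,\Delta t}\right),
\end{eqnarray*}
with $\rho=-(\alpha_1+\alpha_2)\in[-2,0]$ for \eqref{val1} and $\rho=\alpha_1-\alpha_2\in[-1,1]$ for \eqref{val2}. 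Writing $A:=A_{h,j}$, both claims reduce to the single uniform estimate $\left\Vert A^{\rho}\left(e^{-\Delta tA}-(\mathbf{I}+\Delta tA)^{-1}\right)\right\Vert_{L(H)}\leq C\Delta t^{-\rho}$.

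To prove this I would expand both operators to second order in $\Delta t$ using the exact identities
\begin{eqnarray*}
e^{-\Delta tA}=\mathbf{I}-\Delta tA+\int_0^{\Delta t}(\Delta t-s)A^2e^{-sA}\,ds,\qquad (\mathbf{I}+\Delta tA)^{-1}=\mathbf{I}-\Delta tA+\Delta t^2A^2(\mathbf{I}+\Delta tA)^{-1},
\end{eqnarray*}
the first being Taylor's formula with integral remainder and the second the algebraic resolvent identity. Subtracting, the zeroth- and first-order terms cancel exactly, leaving
\begin{eqnarray*}
e^{-\Delta tA}-(\mathbf{I}+\Delta tA)^{-1}=A^2\left[\int_0^{\Delta t}(\Delta t-s)e^{-sA}\,ds-\Delta t^2(\mathbf{I}+\Delta tA)^{-1}\right].
\end{eqnarray*}
I would then split the range of $\rho$ at the integers. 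For $\rho\in[0,1]$ I use the trivial decomposition $A^{\rho}(e^{-\Delta tA}-(\mathbf{I}+\Delta tA)^{-1})=A^{\rho}e^{-\Delta tA}-A^{\rho}(\mathbf{I}+\Delta tA)^{-1}$ and bound each term by \eqref{smooth1} and \lemref{lemma6}. For $\rho\in[-1,0]$ I use the first-order analogue $A^{-1}(e^{-\Delta tA}-(\mathbf{I}+\Delta tA)^{-1})=-\int_0^{\Delta t}e^{-sA}\,ds+\Delta t(\mathbf{I}+\Delta tA)^{-1}$, distribute the surplus power $A^{1+\rho}$ onto the integrand and the resolvent, and estimate via $\Vert A^{1+\rho}e^{-sA}\Vert_{L(H)}\leq Cs^{-(1+\rho)}$ and $\Vert A^{1+\rho}(\mathbf{I}+\Delta tA)^{-1}\Vert_{L(H)}\leq C\Delta t^{-(1+\rho)}$ (both with exponent $<1$). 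For $\rho\in[-2,-1]$ I use the second-order identity above, distributing $A^{2+\rho}$, with the same two bounds applied at exponent $2+\rho<1$.

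The delicate point, which dictates the whole argument, is the endpoint $\alpha_1=\alpha_2=1$, i.e. $\rho=-2$: there no smoothing power of $A$ survives, a first-order expansion is insufficient, and one must rely on the exact second-order cancellation, after which $A^{-2}(e^{-\Delta tA}-(\mathbf{I}+\Delta tA)^{-1})=\int_0^{\Delta t}(\Delta t-s)e^{-sA}\,ds-\Delta t^2(\mathbf{I}+\Delta tA)^{-1}$ is bounded directly by $C\Delta t^{2}$ using $\Vert e^{-sA}\Vert_{L(H)}\leq C$ and $\Vert(\mathbf{I}+\Delta tA)^{-1}\Vert_{L(H)}\leq C$. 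The rest is bookkeeping: every constant coming from \eqref{smooth1}, \lemref{lemma2} and \lemref{lemma6} is uniform in $h,j,k,\Delta t$, so the final bounds are uniform as claimed. As a cleaner alternative to the range-splitting one may prove only the integer endpoints $\rho\in\{-2,-1,0,1\}$ by the above and recover the intermediate fractional exponents through the moment (interpolation) inequality for fractional powers of the sectorial operator $A$.
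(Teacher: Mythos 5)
Your proposal is correct, but its core mechanism differs from the paper's. Both proofs share the same outer scaffolding: reduce to the frozen operator ($k=j$) via the norm equivalences of \lemref{lemma2}, exploit an exact cancellation between $e^{-\Delta tA_{h,j}}$ and $(\mathbf{I}+\Delta tA_{h,j})^{-1}$, and finish with the uniform smoothing bounds \eqref{smooth1} and \lemref{lemma6} — crucially never invoking a spectral decomposition, which is what makes the lemma work for the non-self-adjoint, time-dependent family. Where you diverge is the cancellation identity itself. The paper differentiates along the interpolation path $s\mapsto(\mathbf{I}+sA_{h,j})^{-1}e^{-(\Delta t-s)A_{h,j}}$, obtaining the single formula
\begin{eqnarray*}
S^j_{h,\Delta t}-e^{-A_{h,j}\Delta t}=\int_0^{\Delta t}sA_{h,j}^2(\mathbf{I}+sA_{h,j})^{-2}e^{-(\Delta t-s)A_{h,j}}\,ds,
\end{eqnarray*}
whose integrand carries \emph{both} a resolvent factor and a semigroup factor; the two fractional powers $(A_{h,j})^{-\alpha_1}$, $(A_{h,j})^{-\alpha_2}$ are then absorbed one by each factor, with smoothing in the complementary variables $s$ and $\Delta t-s$, so a single Beta-type integral covers all $\alpha_1,\alpha_2$ at once. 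You instead subtract the second-order Taylor and resolvent expansions, which isolates the remainder $A^2\bigl[\int_0^{\Delta t}(\Delta t-s)e^{-sA}\,ds-\Delta t^2(\mathbf{I}+\Delta tA)^{-1}\bigr]$ with the two remainder terms \emph{separated}; since each term must then absorb the full surplus power on its own, you are forced into the three-way case split on $\rho$ (zeroth-, first-, and second-order expansions for $\rho\in[0,1]$, $[-1,0)$, $[-2,-1)$ respectively, the right endpoints being covered by the overlap with the preceding range). What your route buys is elementarity — only Taylor's theorem and the algebraic resolvent identity — plus a clean unification of \eqref{val1} and \eqref{val2} into one statement indexed by $\rho=-(\alpha_1+\alpha_2)$ or $\alpha_1-\alpha_2$ via explicit commutativity of functions of $A_{h,j}$ (the paper proves \eqref{val1} and only asserts \eqref{val2} is similar); what the paper's route buys is a one-shot computation with no case analysis. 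Two small points of bookkeeping in your write-up: at $\rho=1$ the bound $\Vert A(\mathbf{I}+\Delta tA)^{-1}\Vert_{L(H)}\leq C\Delta t^{-1}$ is not literally an instance of \lemref{lemma6} (which requires $n>\alpha$) but follows from the identity $A(\mathbf{I}+\Delta tA)^{-1}=\Delta t^{-1}\left(\mathbf{I}-(\mathbf{I}+\Delta tA)^{-1}\right)$; and your constants necessarily degenerate as $\rho$ approaches the right endpoint of each interior range, which is harmless here (the constant may depend on $\alpha_1,\alpha_2$, as it also does in the paper) but worth stating.
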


\begin{proof}
We only prove \eqref{val1} since the proof of \eqref{val2} is similar. 
Let us set 
\begin{eqnarray*}
K^j_{h,\Delta t}:=e^{-A_{h,j}\Delta t}-S^j_{h,\Delta t}.
\end{eqnarray*}
 One can easily check that
\begin{eqnarray}
\label{vend1}
-K^j_{h,\Delta t}&=&\int_0^{\Delta t}\frac{d}{ds}\left((\mathbf{I}+sA_{h,j})^{-1}e^{-(\Delta t-s)A_{h,j}}\right)ds=\int_0^{\Delta t}sA_{h,j}^2(\mathbf{I}+sA_{h,j})^{-2}e^{-(\Delta t-s)A_{h,j}}ds\nonumber\\
&=&\int_0^{\Delta t}sA_{h,j}(\mathbf{I}+sA_{h,j})^{-2}A_{h,j}e^{-(\Delta t-s)A_{h,j}}ds.
\end{eqnarray}
Using \lemref{lemma2}, it holds that
\begin{eqnarray}
\label{vend2}
\left\Vert(A_{h,k})^{-\alpha_1}K^j_{h,\Delta t}(A_{h,j})^{-\alpha_2}\right\Vert_{L(H)}\leq C\left\Vert(A_{h,j})^{-\alpha_1}K^j_{h,\Delta t}(A_{h,j})^{-\alpha_2}\right\Vert_{L(H)}.
\end{eqnarray}
From \eqref{vend1} it holds that
{\small
\begin{eqnarray}
\label{vend3}
-(A_{h,j})^{-\alpha_1}K^j_{h,\Delta t}(A_{h,j})^{-\alpha_2}=\int_0^{\Delta t}sA_{h,j}^{1-\alpha_1}(\mathbf{I}+sA_{h,j})^{-2}A_{h,j}^{1-\alpha_2}e^{-(\Delta t-s)A_{h,j}}ds.
\end{eqnarray}
}
Taking the norm in both sides of \eqref{vend3},  employing \eqref{smooth1} and \lemref{lemma6} yields 
\begin{eqnarray*}
\left\Vert -(A_{h,j})^{-\alpha_1}K^j_{h,\Delta t}(A_{h,j})^{-\alpha_2}\right\Vert_{L(H)}&\leq&\int_0^{\Delta t}s\Vert A_{h,j}^{1-\alpha_1}(\mathbf{I}+sA_{h,j})^{-2}\Vert_{L(H)}\Vert A_{h,j}^{1-\alpha_2}e^{-(\Delta t-s)A_{h,j}}\Vert ds\nonumber\\
&\leq & C\int_0^{\Delta t}ss^{-1+\alpha_1}(\Delta t-s)^{-1+\alpha_2}ds\nonumber\\
&\leq & C\Delta t^{\alpha_1+\alpha_2}.
\end{eqnarray*}
This completes the proof of the lemma.
\end{proof}

\begin{lemma}
\label{lemma9}
For all $\alpha_1,\alpha_2>0$ and $\alpha\in[0,1)$, there exist   $C_{\alpha_1\alpha_2}, C_{\alpha,\alpha_2}\geq 0$ such that
\begin{eqnarray}
\label{son1}
\Delta t\sum_{j=1}^mt_{m-j+1}^{-1+\alpha_1}t_j^{-1+\alpha_2}\leq C_{\alpha_1\alpha_2}t_m^{-1+\alpha_1+\alpha_2},\quad
\label{son2}
\Delta t\sum_{j=1}^mt_{m-j+1}^{-\alpha}t_j^{-1+\alpha_2}\leq C_{\alpha\alpha_2}t_m^{-\alpha+\alpha_2}.
\end{eqnarray}
\end{lemma}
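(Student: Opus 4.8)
The plan is to recognize both estimates as discrete analogues of the Beta-function integral $\int_0^{t_m}(t_m-s)^{-1+\alpha_1}s^{-1+\alpha_2}\,ds=B(\alpha_1,\alpha_2)\,t_m^{-1+\alpha_1+\alpha_2}$, and to prove them by splitting each sum at the midpoint $j\approx m/2$ so that only one of the two singular factors is active on each block. Throughout I use $t_j=j\Delta t$ and $t_m=m\Delta t$; no operator theory is needed, only elementary estimates on these sums.

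First I would record the single-sum bound that is the only quantitative input: for any $\rho>0$ and $1\le k\le m$,
\begin{eqnarray*}
\Delta t\sum_{j=1}^k t_j^{-1+\rho}=\Delta t^{\rho}\sum_{j=1}^k j^{-1+\rho}\leq C_\rho\,(k\Delta t)^{\rho}=C_\rho\,t_k^{\rho},
\end{eqnarray*}
which follows by comparing $\sum_{j=1}^k j^{-1+\rho}$ with $\int_0^k x^{-1+\rho}\,dx=k^{\rho}/\rho$; the hypothesis $\rho>0$ guarantees integrability of the singularity at the origin, and since $t_k\le t_m$ the right-hand side is always $\le C_\rho t_m^{\rho}$.

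For \eqref{son1} I would split the sum at $j=\lceil m/2\rceil$. On the lower block $1\le j\le\lceil m/2\rceil$ one checks $t_m/2\le t_{m-j+1}\le t_m$, so that $t_{m-j+1}^{-1+\alpha_1}\le C\,t_m^{-1+\alpha_1}$ irrespective of the sign of $-1+\alpha_1$; pulling this factor out and applying the single-sum bound with $\rho=\alpha_2$ yields a contribution $\le C\,t_m^{-1+\alpha_1}\,t_m^{\alpha_2}=C\,t_m^{-1+\alpha_1+\alpha_2}$. On the upper block $\lceil m/2\rceil<j\le m$ one instead has $t_m/2<t_j\le t_m$, whence $t_j^{-1+\alpha_2}\le C\,t_m^{-1+\alpha_2}$; after the reindexing $i=m-j+1$ the residual sum $\Delta t\sum_i t_i^{-1+\alpha_1}$ is controlled by $C\,t_m^{\alpha_1}$ via the single-sum bound with $\rho=\alpha_1$, again giving $C\,t_m^{-1+\alpha_1+\alpha_2}$. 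Adding the two blocks proves \eqref{son1}.

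Finally, \eqref{son2} is not a separate computation: setting $\alpha_1:=1-\alpha$ converts $t_{m-j+1}^{-\alpha}$ into $t_{m-j+1}^{-1+\alpha_1}$ and the target exponent $-\alpha+\alpha_2$ into $-1+\alpha_1+\alpha_2$, while the hypothesis $\alpha\in[0,1)$ ensures $\alpha_1\in(0,1]$, so \eqref{son2} is exactly \eqref{son1} for this choice. I expect no genuine obstacle; the only delicate point is that both singular factors are present simultaneously, which is precisely why the midpoint splitting — isolating one singularity per block — is the natural device, and why the tame side of each inequality needs only the crude monotonicity sandwich $t_m/2\le t_{m-j+1}\le t_m$ (respectively $t_m/2<t_j\le t_m$) rather than a sharp constant.
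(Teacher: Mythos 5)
Your proof is correct and follows essentially the same route as the paper: the paper proves \eqref{son1} "by comparison with the integral $\int_0^t(t-s)^{-1+\alpha_1}s^{-1+\alpha_2}\,ds$" and obtains \eqref{son2} as a consequence of \eqref{son1}, exactly as you do. Your midpoint splitting is simply the standard way of carrying out that integral comparison rigorously, so you have filled in details the paper leaves to the reader rather than taken a different approach.
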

\begin{proof}
The proof of the first estimate of \eqref{son1} follows by comparison with the following integral 
\begin{eqnarray*}
\int_0^t(t-s)^{-1+\alpha_1}s^{-1+\alpha_2}ds.
\end{eqnarray*}
The proof of the second estimate of \eqref{son2} is a consequence of the first one. See also \cite{Stig2}.
\end{proof}

The following lemma will be very important to establish our convergence results.
\begin{lemma}
\label{lemma10}
Let $0\leq \alpha< 2$ and let \assref{assumption2} be fulfilled. 
\begin{itemize}
\item[(i)] If $v\in \mathcal{D}((A(0))^{\frac{\alpha}{2}})$, then the following estimate holds
\begin{eqnarray}
\left\Vert \left(\prod_{j=i}^{m}e^{-A_{h,j}\Delta t}\right)P_hv-\left(\prod_{j=i-1}^{m-1}S_{h,\Delta t}^j\right)P_hv\right\Vert\leq C\Delta t^{\frac{\alpha}{2}}\Vert v\Vert_{\alpha}, \quad 1\leq i\leq m\leq M.\nonumber
\end{eqnarray}
\item[(ii)]
Moreover, for non smooth data, i.e. for $v\in H$, it holds that 
\begin{eqnarray}
\left\Vert \left(\prod_{j=i}^{m}e^{-A_{h,j}\Delta t}\right)P_hv-\left(\prod_{j=i-1}^{m-1}S_{h,\Delta t}^j\right)P_hv\right\Vert\leq C\Delta t^{\frac{\alpha}{2}}t_{m-i}^{-\frac{\alpha}{2}}\Vert v\Vert, \quad 1\leq i< m\leq M.\nonumber
\end{eqnarray}
\item[(iii)]
For any $\alpha_1, \alpha_2\in[0,1)$ such that $\alpha_1\leq\alpha_2$, it holds that 
\begin{eqnarray}
\left\Vert \left[\left(\prod_{j=i}^{m}e^{-A_{h,j}\Delta t}\right)-\left(\prod_{j=i-1}^{m-1}S_{h,\Delta t}^j\right)\right](A_{h,i})^{\alpha_1-\alpha_2}\right\Vert_{L(H)}\leq C\Delta t^{\alpha_2}t_{m-i}^{-\alpha_1}, \quad 1\leq i< m\leq M.\nonumber
\end{eqnarray}
\item[(iv)] For any $\gamma\in\left[0,1\right)$, it holds that 
\begin{eqnarray}
\left\Vert \left[\left(\prod_{j=i}^{m}e^{-A_{h,j}\Delta t}\right)-\left(\prod_{j=i-1}^{m-1}S_{h,\Delta t}^j\right)\right](A_{h,i})^{\frac{\gamma}{2}}\right\Vert_{L(H)}\leq C\Delta t^{\frac{1-\gamma-\epsilon}{2}}t_{m-i}^{\frac{-1-\epsilon}{2}}, \quad 1\leq i< m\leq M.\nonumber
\end{eqnarray}
\end{itemize}
\end{lemma}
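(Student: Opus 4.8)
The plan is to reduce all four estimates to a single factor-by-factor telescoping of the two products, and then to distribute fractional powers of $A_{h,\cdot}$ so that the available lemmas apply. Writing $E_k:=e^{-A_{h,k}\Delta t}$ and $R_k:=S^{k-1}_{h,\Delta t}$ for $i\le k\le m$ (so that, after the reindexing $\prod_{j=i-1}^{m-1}S^j_{h,\Delta t}=\prod_{k=i}^{m}R_k$, both products carry $m-i+1$ factors), I would start from the non-commutative telescoping identity
\[
\prod_{j=i}^{m}e^{-A_{h,j}\Delta t}-\prod_{j=i-1}^{m-1}S^{j}_{h,\Delta t}=\sum_{k=i}^{m}\left(\prod_{l=k+1}^{m}E_l\right)(E_k-R_k)\left(\prod_{l=i}^{k-1}R_l\right),
\]
and split each factorwise defect as $E_k-R_k=K^k_{h,\Delta t}+\left(S^k_{h,\Delta t}-S^{k-1}_{h,\Delta t}\right)$, where $K^k_{h,\Delta t}=e^{-A_{h,k}\Delta t}-S^k_{h,\Delta t}$ is the frozen defect controlled by \lemref{lemma8} and the resolvent increment is controlled by \lemref{lemma7} (iii). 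Once (iii) is proved, parts (i) and (ii) are immediate: choosing $\alpha_1=0,\ \alpha_2=\tfrac{\alpha}{2}$ in (iii) and writing $P_hv=(A_{h,i})^{-\alpha/2}(A_{h,i})^{\alpha/2}P_hv$ with $\Vert(A_{h,i})^{\alpha/2}P_hv\Vert\le C\Vert v\Vert_{\alpha}$ (\lemref{lemma2} (iii)) yields (i), while $\alpha_1=\alpha_2=\tfrac{\alpha}{2}$ yields (ii); the boundary case $i=m$ of (i) follows directly from \lemref{lemma8} and \lemref{lemma7} (iii).

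For (iii) I would estimate each summand by inserting powers tuned to its three blocks: on the left exponential block the smoothing $\Vert(\prod_{l=k+1}^{m}E_l)(A_{h,k})^{a}\Vert_{L(H)}\le Ct_{m-k}^{-a}$ from \lemref{lemma5} (after an index swap by \lemref{lemma2}); on the frozen defect the bound $\Vert(A_{h,k})^{-a}K^k_{h,\Delta t}(A_{h,k})^{-b}\Vert_{L(H)}\le C\Delta t^{a+b}$ from \eqref{val1}; and on the right resolvent block together with the trailing $(A_{h,i})^{\alpha_1-\alpha_2}$ the two-sided estimate from \lemref{lemma7} (ii), contributing $t_{k-i}^{(\alpha_2-\alpha_1)-b}$. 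Writing $a=1-a'$ and choosing $b=(\alpha_2-\alpha_1)+1-c'$ makes each summand take the shape $\Delta t\,t_{m-k}^{-1+a'}t_{k-i}^{-1+c'}$ times a surviving power of $\Delta t$; the discrete convolution \lemref{lemma9} then returns $t_{m-i}^{-1+a'+c'}$, and imposing $a'+c'=1-\alpha_1$ (feasible with $\alpha_2-\alpha_1<c'<1-\alpha_1$, which is where both $\alpha_1\le\alpha_2<1$ enter) collapses the bound to exactly $C\Delta t^{\alpha_2}t_{m-i}^{-\alpha_1}$. The resolvent-increment sum is handled identically with \lemref{lemma7} (iii) in place of \eqref{val1}; its extra factor $\Delta t^{2}t_k^{-1}$ makes it strictly smaller and it is absorbed by the same \lemref{lemma9} step.

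Part (iv) is the delicate one and I expect it to be the main obstacle. The difference is that the trailing power $(A_{h,i})^{\gamma/2}$ is now positive, so it cannot be used as regularity fed into the defect; instead it must be supplied. For interior indices $i<k<m$ this causes no harm: the positive power is soaked up by the right resolvent block (a two-sided smoothing of the resolvent product, which for a positive trailing power follows from \lemref{lemma7} and \lemref{lemma2} after commuting the power through the resolvents), contributing $t_{k-i}^{-b-\gamma/2}$, and the scheme of (iii) goes through. The obstruction is concentrated at the endpoint $k=i$, where the right block is empty and the full power lands on $K^i_{h,\Delta t}$: the integral representation \eqref{vend1} then produces an integrand $\sim s\,\Vert A_{h,i}^{1-p}(\mathbf{I}+sA_{h,i})^{-2}\Vert\,\Vert A_{h,i}^{1+\gamma/2}e^{-(\Delta t-s)A_{h,i}}\Vert$, which is non-integrable at $s=\Delta t$ because the exponent $1+\tfrac{\gamma}{2}$ exceeds $1$.

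The remedy I would use is to transfer a fraction $\delta$ of the power across the resolvent square and to hand a fraction $p$ of it to the global parabolic smoothing of the surviving exponential block via $\Vert(\prod_{l=i+1}^{m}E_l)(A_{h,i})^{p}\Vert_{L(H)}\le Ct_{m-i}^{-p}$. This replaces the divergent integral by the convergent Beta integral $\int_0^{\Delta t}s^{\,p-\delta}(\Delta t-s)^{-1-\gamma/2+\delta}\,ds\sim \Delta t^{\,p-\gamma/2}$, valid precisely when $\tfrac{\gamma}{2}<\delta<1$ and $p<1$, giving the endpoint bound $C\Delta t^{\,p-\gamma/2}t_{m-i}^{-p}$. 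Taking $p=\tfrac{1+\epsilon}{2}<1$ reproduces the singular factor $t_{m-i}^{(-1-\epsilon)/2}$, and collecting the $\Delta t$-exponents over all terms delivers $\Delta t^{(1-\gamma-\epsilon)/2}$; the resolvent-increment endpoint term is treated the same way through \eqref{ident1} and \lemref{lemma6} and is smaller. The role of $\epsilon>0$ is exactly to keep $p$ and the conjugate Beta/\lemref{lemma9} exponents strictly below the borderline value $1$ as $\gamma\uparrow 1$, so that every integral and discrete convolution stays convergent with a finite constant; this borderline, rather than any single hard computation, is the real difficulty of the statement.
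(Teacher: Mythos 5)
Your proposal is correct and, at its core, follows the same strategy as the paper's proof: the non-commutative telescoping identity \eqref{teles1}, the splitting of each factor defect $e^{-A_{h,k}\Delta t}-S^{k-1}_{h,\Delta t}$ into the frozen defect $K^k_{h,\Delta t}$ (handled by \lemref{lemma8}) plus the resolvent increment $S^{k}_{h,\Delta t}-S^{k-1}_{h,\Delta t}$ (handled by \lemref{lemma7} (iii)), insertion of tuned fractional powers, and the discrete convolution \lemref{lemma9}. Two departures are worth recording. First, you prove (iii) and deduce (i)--(ii) from it by the choices $(\alpha_1,\alpha_2)=(0,\tfrac{\alpha}{2})$ and $(\tfrac{\alpha}{2},\tfrac{\alpha}{2})$, whereas the paper runs the same three-block computation separately for each part; your organization is more economical and legitimate, but inside the proof of (iii) you should say explicitly how the two endpoint summands $k=i$ and $k=m$ are treated, since there one block is empty, the generic factors $t_{m-k}^{-a}$ or $t_{k-i}^{\alpha_2-\alpha_1-b}$ degenerate, and one must instead take $a=0$ (resp.\ $b=\alpha_2$), exactly as the paper does with its terms $I_1$ and $I_2$. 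Second, and more substantively, your Beta-integral redistribution at the $k=i$ endpoint of (iv) supplies a step the paper actually glosses over: in its term $J_2$ the paper invokes a defect bound of the form $\Vert(A_{h,0})^{\frac{-1+\epsilon}{2}}\left(e^{-A_{h,i}\Delta t}-S^{i-1}_{h,\Delta t}\right)(A_{h,0})^{\frac{\gamma}{2}}\Vert_{L(H)}\leq C\Delta t^{\frac{1-\gamma-\epsilon}{2}}$, i.e.\ with a \emph{positive} power on the right, a case that neither \eqref{val1} nor \eqref{val2} covers and for which the naive split of the representation \eqref{vend1} produces precisely the non-integrable factor $(\Delta t-s)^{-1-\frac{\gamma}{2}}$ you identified; transferring an exponent $\delta>\tfrac{\gamma}{2}$ onto the resolvent square is the correct repair. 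One caveat on your interior terms in (iv): the two-sided bound for a resolvent product with positive powers on \emph{both} sides does not follow by ``commuting the power through the resolvents'' (the operators $A_{h,j}$ at different indices do not commute); it requires the frozen-operator comparison and perturbation expansion from the proof of \lemref{lemma7}, where only the frozen product commutes with its own fractional powers. The paper uses the same unproved bound in \eqref{revi2} and \eqref{revi4}, so your argument is at the paper's level of rigor there, but a complete write-up should prove this extension.
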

\begin{proof}
\begin{itemize}
\item[(i)]
Using the telescopic sum, we obtain
\begin{eqnarray}
\label{teles0}
&&\left(\prod_{j=i}^{m}e^{-A_{h,j}\Delta t}\right)P_hv-\left(\prod_{j=i-1}^{m-1}S_{h,\Delta t}^j\right)P_hv\nonumber\\
&=&\sum_{k=1}^{m-i+1}\left(\prod_{j=i+k}^me^{-A_{h,j}\Delta t}\right)\left(e^{-A_{h,i+k-1}\Delta t}-S^{i+k-2}_{h,\Delta t}\right)\left(\prod_{j=i-1}^{i+k-3}S^j_{h,\Delta t}\right)P_hv.\nonumber
\end{eqnarray}
Writing down explicitly the first and the last terms of  the above identity yields
\begin{eqnarray}
\label{teles1}
&&\left(\prod_{j=i}^{m}e^{-A_{h,j}\Delta t}\right)P_hv-\left(\prod_{j=i-1}^{m-1}S_{h,\Delta t}^j\right)P_hv\nonumber\\
&=&\left(e^{-A_{h,m}\Delta t}-S^{m-1}_{h,\Delta t}\right)\left(\prod_{j=i-1}^{m-2}S^j_{h,\Delta t}\right)P_hv+\left(\prod_{j=i+1}^me^{-A_{h,j}\Delta t}\right)\left(e^{-A_{h,i}\Delta t}-S^{i-1}_{h,\Delta t}\right)P_hv\nonumber\\
&+&\sum_{k=2}^{m-i}\left(\prod_{j=i+k}^me^{-A_{h,j}\Delta t}\right)\left(e^{-A_{h,i+k-1}\Delta t}-S^{i+k-2}_{h,\Delta t}\right)\left(\prod_{j=i-1}^{i+k-3}S^j_{h,\Delta t}\right)P_hv.
\end{eqnarray}
Taking the norm in both sides of \eqref{teles1}, inserting an appropriate power of $A_{h,j}$ and using triangle inequality yields
{\small
\begin{eqnarray}
\label{eti1}
&&\left\Vert\left(\prod_{j=i}^{m}e^{-A_{h,j}\Delta t}\right)P_hv-\left(\prod_{j=i-1}^{m-1}S_{h,\Delta t}^j\right)P_hv\right\Vert\nonumber\\
&\leq&\left\Vert\left(e^{-A_{h,m}\Delta t}-S^{m-1}_{h,\Delta t}\right)(A_{h,m})^{-\frac{\alpha}{2}}(A_{h,m})^{\frac{\alpha}{2}}\left(\prod_{j=i-1}^{m-2}S^j_{h,\Delta t}\right)(A_{h,i})^{-\frac{\alpha}{2}}(A_{h,i})^{\frac{\alpha}{2}}P_hv\right\Vert\nonumber\\
&+&\left\Vert\left(\prod_{j=i+1}^me^{-A_{h,j}\Delta t}\right)\left(e^{-A_{h,i}\Delta t}-S^{i-1}_{h,\Delta t}\right)(A_{h,i})^{-\frac{\alpha}{2}}(A_{h,i})^{\frac{\alpha}{2}}P_hv\right\Vert\nonumber\\
&+&\sum_{k=2}^{m-i}\left\Vert\left(\prod_{j=i+k}^me^{-A_{h,j}\Delta t}\right)(A_{h,i+k})^{1-\epsilon}(A_{h,i+k})^{-1+\epsilon}\left(e^{-A_{h,i+k-1}\Delta t}-S^{i+k-2}_{h,\Delta t}\right)(A_{h,i+k-1})^{-\frac{\alpha}{2}-\epsilon}\right.\nonumber\\
&&.(A_{h,i+k-1})^{\frac{\alpha}{2}+\epsilon}\left.\left(\prod_{j=i-1}^{i+k-3}S^j_{h,\Delta t}\right)(A_{h,i})^{-\frac{\alpha}{2}}(A_{h,i})^{\frac{\alpha}{2}}P_hv\right\Vert\nonumber\\
&=:& I_1+I_2+I_3.
\end{eqnarray}
} 
Using Lemmas \ref{lemma8}, \ref{lemma7} (ii)-(iii) and \ref{lemma2}  yields 
{\small
\begin{eqnarray}
\label{eti2}
&&I_1\nonumber\\
&\leq&\left\Vert\left(e^{-A_{h,m}\Delta t}-S^{m-1}_{h,\Delta t}\right)(A_{h,m})^{-\frac{\alpha}{2}}\right\Vert_{L(H)}\left\Vert(A_{h,m})^{\frac{\alpha}{2}}
\left(\prod_{j=i-1}^{m-2}S^j_{h,\Delta t}\right)(A_{h,i})^{-\frac{\alpha}{2}}\right\Vert_{L(H)}\nonumber\\
&&\times\Vert(A_{h,i})^{\frac{\alpha}{2}}P_hv\Vert\nonumber\\
&\leq&C\left\Vert\left(e^{-A_{h,m}\Delta t}-S^{m-1}_{h,\Delta t}\right)(A_{h,m})^{-\frac{\alpha}{2}}\right\Vert_{L(H)}\Vert v\Vert_{\alpha}\nonumber\\
&\leq&C\left\Vert\left(e^{-A_{h,m}\Delta t}-S^{m}_{h,\Delta t}\right)(A_{h,m})^{-\frac{\alpha}{2}}\right\Vert_{L(H)}\Vert v\Vert_{\alpha}+C\left\Vert\left(S^{m}_{h, \Delta t}-S^{m-1}_{h,\Delta t}\right)(A_{h,m})^{-\frac{\alpha}{2}}\right\Vert_{L(H)}\Vert v\Vert_{\alpha}\nonumber\\
&\leq & C\Delta t^{\frac{\alpha}{2}}\Vert v\Vert_{\alpha}.
\end{eqnarray}
}
Using Lemmas \ref{lemma5}, \ref{lemma8}, \ref{lemma7}  and \ref{lemma2}  yields 
{\small
\begin{eqnarray}
\label{eti3}
I_2&\leq&\left\Vert\left(\prod_{j=i+1}^me^{-A_{h,j}\Delta t}\right)\right\Vert_{L(H)}\left\Vert\left(e^{-A_{h,i}\Delta t}-S^{i-1}_{h,\Delta t}\right)(A_{h,i})^{-\frac{\alpha}{2}}\right\Vert_{L(H)}\Vert(A_{h,i})^{\frac{\alpha}{2}}P_hv\Vert\nonumber\\
&\leq&C\left\Vert\left(e^{-A_{h,i}\Delta t}-S^{i-1}_{h,\Delta t}\right)(A_{h,i})^{-\frac{\alpha}{2}}\right\Vert_{L(H)}\Vert v\Vert_{\alpha}\nonumber\\
&\leq&C\left\Vert\left(e^{-A_{h,i}\Delta t}-S^{i}_{h,\Delta t}\right)(A_{h,i})^{-\frac{\alpha}{2}}\right\Vert_{L(H)}\Vert v\Vert_{\alpha}+C\left\Vert\left(S^{i}_{h, \Delta t}-S^{i-1}_{h,\Delta t}\right)(A_{h,i})^{-\frac{\alpha}{2}}\right\Vert_{L(H)}\Vert v\Vert_{\alpha}\nonumber\\
&\leq & C\Delta t^{\frac{\alpha}{2}}\Vert v\Vert_{\alpha}.
\end{eqnarray}
}
Using Lemmas \ref{lemma5}, \ref{lemma8},  \ref{lemma7}, \ref{lemma2} and \ref{lemma9} as in the estimate of $I_1$ and $I_2$ yields 
\begin{eqnarray}
\label{eti4}
I_3&\leq&\sum_{k=2}^{m-i}\left\Vert\left(\prod_{j=i+k}^me^{-A_{h,j}\Delta t}\right)(A_{h,i+k})^{1-\epsilon}\right\Vert_{L(H)}\nonumber\\
&&\times\left\Vert(A_{h,i+k})^{-1+\epsilon}\left(e^{-A_{h,i+k-1}\Delta t}-S^{i+k-2}_{h,\Delta t}\right)(A_{h,i+k-1})^{-\frac{\alpha}{2}-\epsilon}\right\Vert_{L(H)}\nonumber\\
&&\times\left\Vert(A_{h,i+k-1})^{\frac{\alpha}{2}+\epsilon}\left(\prod_{j=i-1}^{i+k-3}S^j_{h,\Delta t}\right)(A_{h,i})^{-\frac{\alpha}{2}}\right\Vert_{L(H)}\Vert(A_{h,i})^{\frac{\alpha}{2}}P_hv\Vert\nonumber\\
&\leq & C\sum_{k=2}^{m-i}t_{m+1-i-k}^{-1+\epsilon}\Delta t^{1+\frac{\alpha}{2}}t_{k-1}^{-\epsilon}= C\Delta t^{\frac{\alpha}{2}}\sum_{k=2}^{m-i}t_{m-i-k+1}^{-1+\epsilon}t_{k-1}^{-\epsilon}\Delta t\nonumber\\
&\leq & C\Delta t^{\frac{\alpha}{2}}.
\end{eqnarray}
Substituting  \eqref{eti4}, \eqref{eti3} and \eqref{eti2} in \eqref{eti1} yields
{\small
\begin{eqnarray}
&&\left\Vert\left(\prod_{j=i}^{m}e^{-A_{h,j}\Delta t}\right)P_hv-\left(\prod_{j=i-1}^{m-1}S_{h,\Delta t}^j\right)P_hv\right\Vert\leq C\Delta t^{\frac{\alpha}{2}}\Vert v\Vert_{\alpha}.
\end{eqnarray}
}
This completes the proof of (i).
\item[(ii)] For non smooth initial data,
taking the norm in both sides of \eqref{teles1} and inserting an appropriate power of $A_{h,j}$ yields
{\small
\begin{eqnarray}
\label{teles2}
&&\left\Vert \left(\prod_{j=i}^{m}e^{-A_{h,j}\Delta t}\right)P_hv-\left(\prod_{j=i-1}^{m-1}S_{h,\Delta t}^j\right)P_hv\right\Vert\nonumber\\
&\leq& \left\Vert\left(e^{-A_{h,m}\Delta t}-S^{m-1}_{h,\Delta t}\right)(A_{h,m})^{-\frac{\alpha}{2}}\right\Vert_{L(H)}\left\Vert(A_{h,m})^{\frac{\alpha}{2}}
\left(\prod_{j=i-1}^{m-2}S^j_{h,\Delta t}\right)P_hv\right\Vert\nonumber\\
&+&\left\Vert\left(\prod_{j=i+1}^me^{-A_{h,\Delta t}}\right)(A_{h,i+1})^{\frac{\alpha}{2}}\right\Vert_{L(H)}\left\Vert(A_{h,i+1})^{-\frac{\alpha}{2}}\left(e^{-A_{h,i}\Delta t}-S^{i-1}_{h,\Delta t}\right)P_hv\right\Vert\nonumber\\
&+&\sum_{k=2}^{m-i}\left\Vert\left(\prod_{j=i+k}^me^{-A_{h,j}\Delta t}\right)(A_{h,i+k})^{1-\epsilon}\right\Vert_{L(H)}\nonumber\\
&&\times\left\Vert(A_{h,i+k})^{-1+\epsilon}\left(e^{-A_{h,i+k-1}\Delta t}-S^{i+k-2}_{h,\Delta t}\right)(A_{h,i+k-1})^{-1+\epsilon}\right\Vert_{L(H)}\nonumber\\
&&\times\left\Vert(A_{h,i+k-1})^{1-\epsilon}\left(\prod_{j=i-1}^{i+k-3}S^j_{h,\Delta t}\right)P_hv\right\Vert.
\end{eqnarray}
}
Employing Lemmas \ref{lemma8}, \ref{lemma7} (i), \ref{lemma5} and \ref{lemma9}, it follows from \eqref{teles2} that
\begin{eqnarray}
\label{teles3}
&&\left\Vert \left(\prod_{j=i}^{m}e^{-A_{h,j}\Delta t}\right)P_hv-\left(\prod_{j=i-1}^{m-1}S_{h,\Delta t}^j\right)P_hv\right\Vert\nonumber\\
&\leq&C\Delta t^{\frac{\alpha}{2}}t_{m-i}^{-\frac{\alpha}{2}}\Vert v\Vert+C\Delta t^{\frac{\alpha}{2}}t_{m-i}^{-\frac{\alpha}{2}}\Vert v\Vert+C\Delta t^{1-2\epsilon}\sum_{k=2}^{m-i}\Delta tt_{m-i-k+1}^{-1+\epsilon}t_{k-1}^{-1+\epsilon}\Vert v\Vert\nonumber\\
&\leq & C\Delta t^{\frac{\alpha}{2}}t_{m-i-k}^{-\frac{\alpha}{2}}\Vert v\Vert+C\Delta t^{\frac{\alpha}{2}}t_{m-i}^{-\frac{\alpha}{2}}\Vert v\Vert+ C\Delta t^{1-2\epsilon}t_{m-i}^{-1+2\epsilon}\Vert v\Vert\nonumber\\
&\leq & C\Delta t^{\frac{\alpha}{2}} t_{m-i}^{-\frac{\alpha}{2}}.
\end{eqnarray}
\item[(iii)]  Inserting an appropriate power of $A_{h,i}$ in \eqref{teles1} and taking the norm in both sides yields
{\small
\begin{eqnarray}
\label{tala2}
&&\left\Vert \left[\left(\prod_{j=i}^{m}e^{-A_{h,j}\Delta t}\right)-\left(\prod_{j=i-1}^{m-1}S_{h,\Delta t}^j\right)\right](A_{h,i})^{\alpha_1-\alpha_2}\right\Vert_{L(H)}\nonumber\\
&\leq& \left\Vert\left(e^{-A_{h,m}\Delta t}-S^{m-1}_{h,\Delta t}\right)(A_{h,m})^{-\alpha_2}\right\Vert_{L(H)}\left\Vert(A_{h,m})^{\alpha_2}
\left(\prod_{j=i-1}^{m-2}S^j_{h,\Delta t}\right)(A_{h,i})^{\alpha_1-\alpha_2}\right\Vert_{L(H)}\nonumber\\
&+&\left\Vert\left(\prod_{j=i+1}^me^{-A_{h,\Delta t}}\right)(A_{h,i+1})^{\alpha_1}\right\Vert_{L(H)}\left\Vert(A_{h,i+1})^{-\alpha_1}\left(e^{-A_{h,i}\Delta t}-S^{i-1}_{h,\Delta t}\right)(A_{h,i})^{-(\alpha_2-\alpha_1)}\right\Vert_{L(H)}\nonumber\\
&+&\sum_{k=2}^{m-i}\left\Vert\left(\prod_{j=i+k}^me^{-A_{h,j}\Delta t}\right)(A_{h,i+k})^{\alpha_2+\epsilon}\right\Vert_{L(H)}\nonumber\\
&&\times\left\Vert(A_{h,i+k})^{-\alpha_2-\epsilon}\left(e^{-A_{h,i+k-1}\Delta t}-S^{i+k-2}_{h,\Delta t}\right)(A_{h,i+k-1})^{-1+\epsilon}\right\Vert_{L(H)}\nonumber\\
&&\times\left\Vert(A_{h,i+k-1})^{1-\epsilon}\left(\prod_{j=i-1}^{i+k-3}S^j_{h,\Delta t}\right)(A_{h,i})^{-(\alpha_2-\alpha_1)}\right\Vert_{L(H)}.
\end{eqnarray}
}
Employing  Lemmas \ref{lemma8}, \ref{lemma7},  \ref{lemma5} and \ref{lemma9},  it follows that
\begin{eqnarray}
\label{tala3}
&&\left\Vert \left[\left(\prod_{j=i}^{m}e^{-A_{h,j}\Delta t}\right)-\left(\prod_{j=i-1}^{m-1}S_{h,\Delta t}^j\right)\right](A_{h,i})^{\alpha_1-\alpha_2}\right\Vert_{L(H)}\nonumber\\
&\leq&C\Delta t^{\alpha_2}t_{m-i}^{-\alpha_1}+C\Delta t^{\alpha_2}t_{m-i}^{-\alpha_1}+C\Delta t^{\alpha_2}\sum_{k=2}^{m-i}\Delta tt_{m-i-k+1}^{-\alpha_2-\epsilon}t_{k-1}^{-1+\epsilon+\alpha_2-\alpha_1}\nonumber\\
&\leq & C\Delta t^{\alpha_2}t_{m-i-k}^{-\alpha_1}+C\Delta t^{\alpha_2}t_{m-i}^{-\alpha_1}+ C\Delta t^{\alpha_2}t_{m-i}^{-\alpha_1}\nonumber\\
&\leq& C\Delta t^{\alpha_2}t_{m-i}^{-\alpha_1}.
\end{eqnarray}
This completes the proof of (iii).
\item[(iv)]
Inserting an appropriate power of $A_{h,j}$ in \eqref{teles1}, taking the norm in both sides  and using triangle inequality yields
\begin{eqnarray}
\label{revi1}
&&\left\Vert\left(\prod_{j=i}^{m}e^{-A_{h,j}\Delta t}\right)P_hv-\left(\prod_{j=i-1}^{m-1}S_{h,\Delta t}^j\right)(A_{h,0})^{\frac{\gamma}{2}} P_hv\right\Vert\nonumber\\
&\leq&\left\Vert\left(e^{-A_{h,m}\Delta t}-S^{m-1}_{h,\Delta t}\right)(A_{h,0})^{\frac{-(1-\gamma-\epsilon)}{2}}(A_{h,0})^{\frac{1-\gamma-\epsilon}{2}}\left(\prod_{j=i-1}^{m-2}S^j_{h,\Delta t}\right)(A_{h,0})^{\frac{\gamma}{2}}P_hv\right\Vert\nonumber\\
&+&\left\Vert\left(\prod_{j=i+1}^me^{-A_{h,j}\Delta t}\right)(A_{h,0})^{\frac{1-\epsilon}{2}}(A_{h,0})^{\frac{-1+\epsilon}{2}}\left(e^{-A_{h,i}\Delta t}-S^{i-1}_{h,\Delta t}\right)(A_{h,0})^{\frac{\gamma}{2}}P_hv\right\Vert\nonumber\\
&+&\sum_{k=2}^{m-i}\left\Vert\left(\prod_{j=i+k}^me^{-A_{h,j}\Delta t}\right)(A_{h,0})^{1-\epsilon}(A_{h,0})^{-1+\epsilon}\left(e^{-A_{h,0}\Delta t}-S^{i+k-2}_{h,\Delta t}\right)(A_{h,0})^{\frac{-(1-\gamma-\epsilon)}{2}}\right.\nonumber\\
&&.(A_{h,0})^{\frac{1-\gamma-\epsilon}{2}}\left.\left(\prod_{j=i-1}^{i+k-3}S^j_{h,\Delta t}\right)(A_{h,0})^{\frac{\gamma}{2}}P_hv\right\Vert\nonumber\\
&=:& J_1+J_2+J_3.
\end{eqnarray}
Using Lemmas \ref{lemma8}, \ref{lemma7} (ii)-(iii) and \ref{lemma2}  yields 
\begin{eqnarray}
\label{revi2}
&&J_1\nonumber\\
&\leq&\left\Vert\left(e^{-A_{h,0}\Delta t}-S^{m-1}_{h,\Delta t}\right)(A_{h,0})^{\frac{-(1-\gamma-\epsilon)}{2}}\right\Vert_{L(H)}\left\Vert(A_{h,0})^{\frac{1-\gamma-\epsilon}{2}}
\left(\prod_{j=i-1}^{m-2}S^j_{h,\Delta t}\right)(A_{h,0})^{\frac{\gamma}{2}}P_hv\right\Vert_{L(H)}\nonumber\\
&\leq&C\left\Vert\left(e^{-A_{h,m}\Delta t}-S^{m-1}_{h,\Delta t}\right)(A_{h,0})^{\frac{-(1-\gamma-\epsilon)}{2}}\right\Vert_{L(H)} t_{m-i}^{\frac{-1+\epsilon}{2}}\Vert v\Vert\nonumber\\
&\leq & C\Delta t^{\frac{1-\gamma-\epsilon}{2}}t_{m-i}^{\frac{-1+\epsilon}{2}}\Vert v\Vert.
\end{eqnarray}
Using Lemmas \ref{lemma5}, \ref{lemma8}, \ref{lemma7}  and \ref{lemma2}  yields 
{\small
\begin{eqnarray}
\label{revi3}
J_2&\leq&\left\Vert\left(\prod_{j=i+1}^me^{-A_{h,j}\Delta t}\right)(A_{h,0})^{\frac{1-\epsilon}{2}}\right\Vert_{L(H)}\left\Vert(A_{h,0})^{\frac{-1+\epsilon}{2}}\left(e^{-A_{h,i}\Delta t}-S^{i-1}_{h,\Delta t}\right)(A_{h,0})^{\frac{\gamma}{2}}P_hv\right\Vert_{L(H)}\nonumber\\
&\leq&\left\Vert\left(\prod_{j=i+1}^me^{-A_{h,j}\Delta t}\right)(A_{h,0})^{\frac{1-\epsilon}{2}}\right\Vert_{L(H)}\left\Vert(A_{h,0})^{\frac{-1+\epsilon}{2}}\left(e^{-A_{h,i}\Delta t}-S^{i-1}_{h,\Delta t}\right)(A_{h,0})^{\frac{\gamma}{2}}\right\Vert_{L(H)}\Vert P_hv\Vert\nonumber\\
&\leq & C\Delta t^{\frac{1-\gamma-\epsilon}{2}}t_{m-i}^{\frac{-1+\epsilon}{2}}\Vert v\Vert.
\end{eqnarray}
}
Using Lemmas \ref{lemma5}, \ref{lemma8},  \ref{lemma7}, \ref{lemma2} and \ref{lemma9} as in the estimate of $J_1$ and $J_2$ yields 
\begin{eqnarray}
\label{revi4}
J_3&\leq&\sum_{k=2}^{m-i}\left\Vert\left(\prod_{j=i+k}^me^{-A_{h,j}\Delta t}\right)(A_{h,0})^{1-\epsilon}\right\Vert_{L(H)}\nonumber\\
&&\times\left\Vert(A_{h,0})^{-1+\epsilon}\left(e^{-A_{h,0}\Delta t}-S^{i+k-2}_{h,\Delta t}\right)(A_{h,0})^{\frac{-(1-\gamma-\epsilon)}{2}}\right\Vert_{L(H)}\nonumber\\
&&\times\left\Vert(A_{h,0})^{\frac{1-\gamma-\epsilon}{2}}\left(\prod_{j=i-1}^{i+k-3}S^j_{h,\Delta t}\right)(A_{h,0})^{\frac{\gamma}{2}}P_hv\right\Vert_{L(H)}\nonumber\\
&\leq & C\sum_{k=2}^{m-i}t_{m+1-i-k}^{-1+\epsilon}\Delta t^{1-\epsilon}t_{k-1}^{\frac{-1-\epsilon}{2}}\Delta t^{\frac{1-\gamma-\epsilon}{2}}\Vert v\Vert= C\Delta t^{\frac{1-\gamma-3\epsilon}{2}}\sum_{k=2}^{m-i}t_{m-i-k+1}^{-1+\epsilon}t_{k-1}^{\frac{-1-\epsilon}{2}}\Delta t\Vert v\Vert\nonumber\\
&\leq & C\Delta t^{\frac{1-\gamma-\epsilon}{2}}t_{m-i}^{\frac{-1+\epsilon}{2}}\Vert v\Vert.
\end{eqnarray}
Substituting  \eqref{revi4}, \eqref{revi3} and \eqref{revi2} in \eqref{revi1} yields
\begin{eqnarray}
\left\Vert\left(\prod_{j=i}^{m}e^{-A_{h,j}\Delta t}\right)P_hv-\left(\prod_{j=i-1}^{m-1}S_{h,\Delta t}^j\right)(A_{h,0})^{\frac{\gamma}{2}} P_hv\right\Vert\leq C\Delta t^{\frac{1-\gamma-\epsilon}{2}}t_{m-i}^{\frac{-1+\epsilon}{2}}\Vert v\Vert.
\end{eqnarray}
\end{itemize}
\end{proof}

\begin{remark}
\label{remark3}
\lemref{lemma10} (i)-(ii)  generalizes \cite[Theorem 7.7 \& Theorem 7.8]{Thomee} (for constant and self-adjoint operator $A(t)=A$
to the case of not necessary self-adjoint and time dependent linear operator $A(t)$.
\end{remark}
\begin{lemma}\cite{Antjd4}
\label{lemma11}
\begin{itemize}
\item[(i)] Let \assref{assumption6} be fulfilled. Then the following estimate holds
\begin{eqnarray*}
\left\Vert (A_h(t))^{\frac{\beta-1}{2}}P_hQ^{\frac{1}{2}}\right\Vert_{\mathcal{L}_2(H)}\leq C,\quad t\in[0,T],
\end{eqnarray*}
where $\beta$ is the parameter defined in \assref{assumption1}.
\item[(ii)] Under \assref{assumption7}, the following estimates hold
\begin{eqnarray*}
\Vert P_hF'(t,u) v\Vert\leq C\Vert v\Vert,\; \Vert A_h^{-\frac{\eta}{2}}P_h \left(F'(t,u)-F'(t,v)\right)\Vert_{L(H)}\leq C\Vert u-v\Vert,
\end{eqnarray*}
for all $t\in[0,T],\; u,v\in H$, where $\eta$ comes from \assref{assumption7} and $C$ is independent of $u$,  $v$, $t$ and $h$. 
\end{itemize}
\end{lemma}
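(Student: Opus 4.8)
The plan is to reduce both parts to uniform (in $h$ and $t$) comparison estimates between the discrete negative fractional powers $(A_h(t))^{-s}P_h$ and the continuous ones $(A(0))^{-s}$, and then to combine these with \assref{assumption6}, \assref{assumption7} and the Hilbert--Schmidt product inequality \eqref{chow1}.

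For part (i), set $\gamma=\frac{\beta-1}{2}$ and write
\[
(A_h(t))^{\gamma}P_hQ^{\frac12}=\left[(A_h(t))^{\gamma}P_h(A(0))^{-\gamma}\right](A(0))^{\gamma}Q^{\frac12}.
\]
By \assref{assumption6} the second factor $(A(0))^{\gamma}Q^{\frac12}$ belongs to $\mathcal{L}_2(H)$, so by \eqref{chow1} it suffices to show that $\Phi_{h,t}:=(A_h(t))^{\gamma}P_h(A(0))^{-\gamma}$ is bounded on $H$ uniformly in $h$ and $t$. Using the equivalence of discrete fractional powers from \lemref{lemma2}, I would replace $(A_h(t))^{\gamma}$ by $(A_h(0))^{\gamma}$ up to a constant and then estimate $(A_h(0))^{\gamma}P_h(A(0))^{-\gamma}$. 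When $\beta\ge 1$, so $\gamma\in[0,\tfrac12)$, this is immediate from \lemref{lemma2}(iii) applied to $v=(A(0))^{-\gamma}w$. When $\beta<1$, so $\gamma=-s$ with $s\in(0,\tfrac12)$, the required bound is the negative-power comparison $\Vert (A_h(0))^{-s}P_hz\Vert\le C\Vert (A(0))^{-s}z\Vert$; taking $z=(A(0))^{s}w$ gives $\Vert\Phi_{h,t}w\Vert\le C\Vert w\Vert$ (after the usual density argument). Either way $\Vert\Phi_{h,t}\Vert_{L(H)}\le C$, and \eqref{chow1} then yields $\Vert (A_h(t))^{\gamma}P_hQ^{\frac12}\Vert_{\mathcal{L}_2(H)}\le C\Vert (A(0))^{\gamma}Q^{\frac12}\Vert_{\mathcal{L}_2(H)}\le C$.

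For part (ii), the first estimate is immediate: since $P_h$ is an orthogonal projection, $\Vert P_hF'(t,u)v\Vert\le\Vert F'(t,u)v\Vert\le C_2\Vert v\Vert$ by the first bound in \assref{assumption7}. For the second estimate I would again use the negative-power comparison $\Vert A_h^{-\eta/2}P_hz\Vert\le C\Vert A^{-\eta/2}z\Vert$ (here $\eta/2\in(\tfrac38,\tfrac12)$ by \assref{assumption7}). Applying it with $z=\left(F'(t,u)-F'(t,v)\right)w$ and invoking the second bound in \assref{assumption7} gives, for every $w\in H$,
\[
\left\Vert A_h^{-\eta/2}P_h\left(F'(t,u)-F'(t,v)\right)w\right\Vert\le C\left\Vert A^{-\eta/2}\left(F'(t,u)-F'(t,v)\right)w\right\Vert\le C\Vert u-v\Vert\,\Vert w\Vert,
\]
and taking the supremum over $\Vert w\Vert\le 1$ produces the claimed operator-norm bound, uniformly in $t$ and $h$ thanks to the equivalences in \lemref{lemma2}.

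The main obstacle is the uniform discrete--continuous negative-power comparison $\Vert (A_h(0))^{-s}P_hz\Vert\le C\Vert (A(0))^{-s}z\Vert$ for $s\in(0,1)$, which cannot be extracted from \lemref{lemma2} alone, since that lemma only compares $A_h(t)$ with $A_h(0)$. Because $A(0)$ is \emph{not} self-adjoint, a spectral-decomposition argument is unavailable; instead I would establish the bound at the endpoints $s=0$ (where it reduces to $\Vert P_h\Vert_{L(H)}\le 1$) and $s=1$ (where $(A_h(0))^{-1}P_h$ is the Galerkin solution operator, whose $L^2$-stability $\Vert (A_h(0))^{-1}P_hz\Vert\le C\Vert (A(0))^{-1}z\Vert$ follows from elliptic regularity), and then interpolate using the bounded imaginary powers of the sectorial operators in the Fujita--Suzuki framework already cited for \assref{assumption2}. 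Uniformity in $t$ then follows once more from the norm equivalences of \lemref{lemma2}.
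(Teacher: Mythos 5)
First, a structural remark: this paper never proves \lemref{lemma11} — it is imported verbatim from the companion work \cite{Antjd4} — so there is no internal proof to compare yours against, and your argument must stand on its own. Much of it does: the factorization through \eqref{chow1}, the case $\beta\geq 1$ of part (i) via \lemref{lemma2}(iii) (granting the natural reading of that lemma for $v\in\mathcal{D}((A(0))^{\alpha})$ rather than only $v\in V_h$), and the first bound of part (ii) (orthogonality of $P_h$) are all correct. You also correctly isolate the crux: everything remaining hinges on one uniform negative-power comparison $\Vert (A_h(t))^{-s}P_hz\Vert\leq C\Vert (A(0))^{-s}z\Vert$ with $s\in(0,\tfrac12]$, namely $s=\tfrac{1-\beta}{2}$ in (i) when $\beta<1$ and $s=\tfrac{\eta}{2}\in(\tfrac38,\tfrac12)$ in (ii).

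The genuine gap is in your proof of that comparison: the $s=1$ endpoint from which you interpolate is false. Indeed $(A_h(0))^{-1}P_h=R_h(A(0))^{-1}$ with $R_h$ the Ritz projection, so your claimed bound $\Vert(A_h(0))^{-1}P_hz\Vert\leq C\Vert(A(0))^{-1}z\Vert$ is precisely $L^2$-stability of $R_h$ on $\mathcal{D}(A(0))$, and this fails; it is not a consequence of elliptic regularity or of any mesh hypothesis. Counterexample: for $A=-d^{2}/dx^{2}$ on $(0,1)$ with piecewise linear elements, $R_h$ coincides with nodal interpolation; taking $u$ a smooth bump of height $1$ and width $\epsilon\ll h$ centered at an interior node gives $\Vert R_hu\Vert\sim h^{1/2}$ while $\Vert u\Vert\sim\epsilon^{1/2}$, so the ratio blows up as $\epsilon\to 0$ even for fixed $h$. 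Consequently the Stein/BIP interpolation between $s=0$ and $s=1$ never gets off the ground (and, even granting the endpoints, uniform-in-$h$ bounded imaginary powers would still need justification). The estimate you need is true only because $s\leq\tfrac12$, and the standard proof goes by duality rather than interpolation from $s=1$: for $w\in H$,
\[
\langle (A_h(t))^{-s}P_hz,\,w\rangle=\langle z,\,(A_h^{*}(t))^{-s}w\rangle=\langle (A(t))^{-s}z,\,(A^{*}(t))^{s}(A_h^{*}(t))^{-s}w\rangle,
\]
which is legitimate because $(A_h^{*}(t))^{-s}w\in V_h\subset V=\mathcal{D}((A^{*}(t))^{1/2})\subset\mathcal{D}((A^{*}(t))^{s})$. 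The problem is thereby reduced to the uniform bound $\Vert (A^{*}(t))^{s}v_h\Vert\leq C\Vert (A_h^{*}(t))^{s}v_h\Vert$ for $v_h\in V_h$, $s\in[0,\tfrac12]$, i.e.\ to the discrete--continuous equivalence of fractional norms of order at most $\tfrac12$ on $V_h$, which is exactly what the Fujita--Suzuki framework \cite{Suzuki} (the source, via \cite{Antjd5}, of \lemref{lemma2}) supplies uniformly in $h$ and $t$, the adjoint forms satisfying the same coercivity and boundedness assumptions. Note that the restriction $s\leq\tfrac12$ ($V_h$ lies in the form domain but not in $\mathcal{D}(A^{*}(t))$) is not a technicality: it is the reason the lemma's exponents satisfy $\vert\beta-1\vert/2\leq\tfrac12$ and $\eta/2<\tfrac12$, and it is precisely the restriction your interpolation route ignores.
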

The following lemma is useful in our convergence analysis. 
\begin{lemma}
\label{Mainmulti}
 For all $1\leq i\leq m\leq M$.
 For all $\alpha\in[0, 1)$, the following estimate holds
\begin{eqnarray}
\label{fonda000}
\left\Vert\left[\left(\prod_{j=i}^mU_h(t_j,t_{j-1})\right)-\left(\prod_{j=i-1}^{m-1}
e^{\Delta tA_{h,j}}\right)\right](-A_{h,i-1})^{\alpha}\right\Vert_{L(H)}\leq C\Delta t^{1-\alpha-\epsilon}t_{m-i+1}^{-\alpha+\epsilon},\nonumber
\end{eqnarray}
for an arbitrarily small $\epsilon>0$.
\end{lemma}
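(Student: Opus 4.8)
The plan is to reduce the product comparison to a sum of one-step consistency errors by a telescopic identity, and then to balance the smoothing of the three resulting factors against the convolution sum of \lemref{lemma9}. First, using the evolution property $U_h(t_m,t_l)=\prod_{k=l+1}^m U_h(t_k,t_{k-1})$ from \lemref{pazylemma} (iii) together with the algebraic telescoping for non-commuting products, I would write (reading the frozen factors as $e^{-\Delta tA_{h,j}}$, the sign in $e^{\Delta tA_{h,j}}$ being a misprint)
\[
\left(\prod_{j=i}^mU_h(t_j,t_{j-1})\right)-\left(\prod_{j=i-1}^{m-1}e^{-\Delta tA_{h,j}}\right)=\sum_{l=i}^m U_h(t_m,t_l)\,D_l\,E_l,
\]
where $D_l:=U_h(t_l,t_{l-1})-e^{-\Delta tA_{h,l-1}}$ is the one-step defect and $E_l:=\prod_{j=i-1}^{l-2}e^{-\Delta tA_{h,j}}$ (with $E_i=\mathbf I$). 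The whole object is then multiplied on the right by $(A_{h,i-1})^{\alpha}$.

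Second, I would extract the smoothing of $D_l$ from the Duhamel representation
\[
D_l=\int_{t_{l-1}}^{t_l}U_h(t_l,\tau)\big(A_{h,l-1}-A_h(\tau)\big)e^{-(\tau-t_{l-1})A_{h,l-1}}\,d\tau.
\]
Inserting $(A_h(0))^{-1}$ and using $\|(A_{h,l-1}-A_h(\tau))(A_h(0))^{-1}\|\le C(\tau-t_{l-1})$ (\lemref{lemma2} (iv)), the norm equivalence (\lemref{lemma2} (ii)), the analytic smoothing \eqref{smooth1}, and \eqref{ae1}, \eqref{reste2}, one obtains $\|(A_{h,k})^{\beta_1}D_l(A_{h,l-1})^{\beta_2}\|_{L(H)}\le C\Delta t^{\,1-\max(\beta_1,0)-\beta_2}$ for $\max(\beta_1,0)<1$, $\beta_2<1$. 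The crucial asymmetry here is that a negative power on the right of $D_l$ yields genuine gain ($\|D_l(A_{h,l-1})^{-q}\|\le C\Delta t^{1+q}$), while a negative power on the left yields only boundedness; this is forced by the defect structure concentrated near $t_{l-1}$.

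Third, and this is the principal obstacle, I need a two-sided smoothing bound for the frozen exponential product with positive powers on \emph{both} ends,
\[
\left\|(A_{h,l-1})^{q}\,E_l\,(A_{h,i-1})^{\alpha}\right\|_{L(H)}\le C\,t_{l-i}^{-(q+\alpha)},\qquad q+\alpha<1.
\]
Since \lemref{lemma5} only supplies the one-sided (right) version, I would recover the missing left-sided estimate $\|(A_{h,l-1})^{q}\prod_{j}e^{-\Delta tA_{h,j}}\|\le Ct^{-q}$ by applying \lemref{lemma5} to the adjoint family $A_h(t)^{*}$ — which satisfies \assref{assumption2} with the two Lipschitz conditions in \eqref{conditionB} interchanged — and transposing; splitting $E_l$ into two halves of comparable length and combining the two one-sided bounds then gives the displayed estimate.

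Finally, for the interior terms $i<l<m$ I would use the factorisation
\[
U_h(t_m,t_l)(A_{h,l})^{\alpha}\cdot(A_{h,l})^{-\alpha}D_l(A_{h,l-1})^{-q}\cdot(A_{h,l-1})^{q}E_l(A_{h,i-1})^{\alpha},
\]
bounding the three factors by $Ct_{m-l}^{-\alpha}$ (\eqref{ae3}), $C\Delta t^{1+q}$, and $Ct_{l-i}^{-(q+\alpha)}$, with the choice $q=1-\alpha-\epsilon$. Summing over $l$ and invoking \lemref{lemma9} with $\alpha_1=1-\alpha$, $\alpha_2=\epsilon$ produces $C\Delta t^{1-\alpha-\epsilon}t_{m-i}^{-\alpha+\epsilon}\le C\Delta t^{1-\alpha-\epsilon}t_{m-i+1}^{-\alpha+\epsilon}$. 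The boundary term $l=i$ (where $E_i=\mathbf I$, so $(A_{h,i-1})^{\alpha}$ is absorbed directly by $D_i$, giving $\le Ct_{m-i}^{-\alpha}\Delta t^{1-\alpha}$) and the boundary term $l=m$ (where $U_h(t_m,t_m)=\mathbf I$, giving $\le C\Delta t^{1+q}t_{m-i}^{-(q+\alpha)}$) are each dominated by the target bound after comparing powers of $\Delta t$ and $t_{m-i+1}$, and the degenerate case $i=m$ follows at once from $\|D_i(A_{h,i-1})^{\alpha}\|\le C\Delta t^{1-\alpha}$.
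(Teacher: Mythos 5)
The paper gives no proof of \lemref{Mainmulti} at all: the lemma is stated bare, with neither a proof environment nor a citation, its content evidently imported from the companion Magnus-integrator papers \cite{Antjd2,Antjd4}. So there is nothing in-paper to compare your argument against step by step; what can be said is that your strategy (telescope the product difference into one-step defects, put smoothing powers on both sides of each defect, close the sum with the discrete convolution estimate of \lemref{lemma9}) is exactly the technique the paper itself uses for \lemref{lemma7} and \lemref{lemma10}, and your execution is essentially correct. Your reading of $e^{\Delta t A_{h,j}}$ and $(-A_{h,i-1})^{\alpha}$ as sign misprints is right (the paper itself uses $e^{\Delta tA_{h,j}}$ and $e^{-A_{h,j}\Delta t}$ interchangeably in the proof of \lemref{reviewimportant}). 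The telescoping identity, the Duhamel representation of $D_l$ (the correct one for the evolution system of \eqref{semi1}, for which $\partial_\tau U_h(t,\tau)=U_h(t,\tau)A_h(\tau)$), the defect bounds obtained by inserting $(A_h(0))^{-1}$ against the Lipschitz estimates of \lemref{lemma2}, the choice $q=1-\alpha-\epsilon$ (which is precisely what produces the $\epsilon$-loss in the statement, since $q+\alpha=1$ would make the convolution sum diverge logarithmically), and the boundary cases $l=i$, $l=m$, $i=m$ all check out.

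One step needs repair, though the repair is routine. The left-sided bound for the frozen product does not follow from ``\lemref{lemma5} applied to the adjoint family, transposed'' as literally stated: transposing the right-sided estimate for $\{A^*_{h,j}\}$ controls $(A_{h,l})^{q}e^{-\Delta tA_{h,l}}e^{-\Delta tA_{h,l+1}}\cdots$, i.e. products whose indices \emph{increase} from left to right, whereas the left half of your splitting of $E_l=\prod_{j=i-1}^{l-2}e^{-\Delta t A_{h,j}}$ has indices \emph{decreasing} from left to right. You must compose the adjoint with a time reversal, i.e. apply \lemref{lemma5} to $\tilde A_{h,j}:=A^{*}_{h,c-j}$ for a suitable shift $c$; this is legitimate because \assref{assumption2} is invariant under reversal of time, the adjoint swaps the two Lipschitz conditions in \eqref{conditionB} exactly as you say, and the discrete norm equivalences of \lemref{lemma2} dualize, e.g. $\Vert (A^{*}_{h,k})^{\gamma}(A^{*}_{h,l})^{-\gamma}\Vert_{L(H)}=\Vert (A_{h,l})^{-\gamma}(A_{h,k})^{\gamma}\Vert_{L(H)}\le C$. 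The same dualized equivalences absorb the one-index mismatch between the power $(A_{h,l-1})^{q}$ and the leftmost factor $e^{-\Delta t A_{h,l-2}}$ of that half. With these two observations inserted, your proof is complete.
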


The following lemma will be useful
\begin{lemma}
\label{reviewimportant}
Let $0\leq \alpha< 2$ and let \assref{assumption2} be fulfilled. 
\begin{itemize}
\item[(i)] If $v\in \mathcal{D}((A(0))^{\frac{\alpha}{2}})$, then the following estimate holds
\begin{eqnarray}
\left\Vert \left(\prod_{j=i}^{m}U_h(t_j, t_{j-1})\right)P_hv-\left(\prod_{j=i-1}^{m-1}S_{h,\Delta t}^j\right)P_hv\right\Vert\leq C\Delta t^{\frac{\alpha}{2}}\Vert v\Vert_{\alpha}, \quad 1\leq i\leq m\leq M.\nonumber
\end{eqnarray}
\item[(ii)]
Moreover, for non smooth data, i.e. for $v\in H$, it holds that 
\begin{eqnarray}
\left\Vert \left(\prod_{j=i}^{m}U_h(t_j, t_{j-1})\right)P_hv-\left(\prod_{j=i-1}^{m-1}S_{h,\Delta t}^j\right)P_hv\right\Vert\leq C\Delta t^{\frac{\alpha}{2}}t_{m-i}^{-\frac{\alpha}{2}}\Vert v\Vert, \quad 1\leq i< m\leq M.\nonumber
\end{eqnarray}
\item[(iii)]
For any $\alpha_1, \alpha_2\in[0,1)$ such that $\alpha_1\leq\alpha_2$, it holds that 
\begin{eqnarray}
\left\Vert \left[\left(\prod_{j=i}^{m}U_h(t_j, t_{j-1})\right)-\left(\prod_{j=i-1}^{m-1}S_{h,\Delta t}^j\right)\right](A_{h,i})^{\alpha_1-\alpha_2}\right\Vert_{L(H)}\leq C\Delta t^{\alpha_2}t_{m-i}^{-\alpha_1}, \quad 1\leq i< m\leq M.\nonumber
\end{eqnarray}
\item[(iv)] For any $\gamma\in\left[0,1\right)$, it holds that 
\begin{eqnarray}
\left\Vert \left[\left(\prod_{j=i}^{m}U_h(t_j, t_{j-1})\right)-\left(\prod_{j=i-1}^{m-1}S_{h,\Delta t}^j\right)\right](A_{h,i})^{\frac{\gamma}{2}}\right\Vert_{L(H)}\leq C\Delta t^{\frac{1-\gamma-\epsilon}{2}}t_{m-i}^{\frac{-1-\epsilon}{2}}, \quad 1\leq i< m\leq M.\nonumber
\end{eqnarray}
\end{itemize}
\end{lemma}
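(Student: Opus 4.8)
The plan is to confine all of the evolution operators to a single term and reduce everything else to the already-proved frozen-semigroup/resolvent estimates. For each of the four parts I insert the \emph{left-frozen} product $\prod_{j=i-1}^{m-1}e^{-A_{h,j}\Delta t}$ as an intermediate and split by the triangle inequality; for part (i), with $v\in\mathcal{D}((A(0))^{\frac{\alpha}{2}})$,
\[
\Big\|\Big(\prod_{j=i}^{m}U_h(t_j,t_{j-1})-\prod_{j=i-1}^{m-1}S^j_{h,\Delta t}\Big)P_hv\Big\|\le T_1+T_2,
\]
where $T_1=\big\|\big(\prod_{j=i}^{m}U_h(t_j,t_{j-1})-\prod_{j=i-1}^{m-1}e^{-A_{h,j}\Delta t}\big)P_hv\big\|$ and $T_2=\big\|\big(\prod_{j=i-1}^{m-1}e^{-A_{h,j}\Delta t}-\prod_{j=i-1}^{m-1}S^j_{h,\Delta t}\big)P_hv\big\|$. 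Parts (ii)--(iv) are split identically, with $P_hv$ replaced by the relevant right multiplier $(A_{h,i})^{\alpha_1-\alpha_2}$ or $(A_{h,i})^{\frac{\gamma}{2}}$. The key structural point is that this choice of intermediate quarantines $U_h$ entirely into $T_1$, while $T_2$ involves only frozen semigroups and resolvents.

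The term $T_1$ is handled by \lemref{Mainmulti} \emph{at the product level} (no further telescoping), since it is exactly the discrepancy that lemma measures. For parts (i)--(iii) the right multiplier is a non-positive power, so I apply \lemref{Mainmulti} with its exponent set to $0$, giving $\big\|\prod_{j=i}^{m}U_h(t_j,t_{j-1})-\prod_{j=i-1}^{m-1}e^{-A_{h,j}\Delta t}\big\|_{L(H)}\le C\Delta t^{1-\epsilon}t_{m-i+1}^{\epsilon}\le C\Delta t^{1-\epsilon}$, after which the bounded non-positive power is absorbed. For part (iv) the right multiplier is the positive power $(A_{h,i})^{\frac{\gamma}{2}}$, so instead I apply \lemref{Mainmulti} with exponent $\frac{\gamma}{2}$ (first replacing $A_{h,i}$ by $A_{h,i-1}$ via the norm equivalence of \lemref{lemma2}), which supplies the smoothing required by the unbounded factor and yields $C\Delta t^{1-\frac{\gamma}{2}-\epsilon}t_{m-i+1}^{-\frac{\gamma}{2}+\epsilon}$. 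In each case the $\Delta t$-power produced is strictly stronger than that of the target, so choosing $\epsilon$ small relative to the exponents and writing $t_{m-i}=(m-i)\Delta t$ with $m-i\le M=T/\Delta t$ to compare the singular $t$-factors shows that $T_1$ is of strictly lower order than, and hence dominated by, the rate claimed in the lemma, uniformly in $1\le i<m\le M$.

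The term $T_2$ is the \emph{same-index} analogue of \lemref{lemma10}: both products now carry the identical index range $i-1,\dots,m-1$, so the one-step discrepancy produced by the telescoping identity is $e^{-A_{h,\ell}\Delta t}-S^{\ell}_{h,\Delta t}=K^{\ell}_{h,\Delta t}$ at a \emph{common} index $\ell$. I would therefore repeat the telescoping argument of \lemref{lemma10} verbatim, with the single simplification that this one-step difference is estimated directly by \lemref{lemma8} (with the appropriate negative powers), so the extra resolvent-difference split $S^{\ell}_{h,\Delta t}-S^{\ell-1}_{h,\Delta t}$ of \lemref{lemma7}(iii) used there is no longer needed; the smoothing of the left factor is again furnished by \lemref{lemma5} and the discrete convolutions by \lemref{lemma9}. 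This produces exactly the four bounds of \lemref{lemma10}(i)--(iv), which are the rates asserted here, and adding $T_1$ (lower order) completes each estimate.

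I expect the main obstacle to be not any single computation but the bookkeeping that makes the two pieces fit: the intermediate must be the left-frozen product so that $T_1$ is \emph{literally} the quantity controlled by \lemref{Mainmulti} while $T_2$ collapses to an easier same-index version of \lemref{lemma10}, and one must verify that the $T_1$ contribution genuinely beats the target for \emph{every} $1\le i<m\le M$. The delicate regime is $m-i$ small, where the factors $t_{m-i}^{-\alpha_1}$, $t_{m-i}^{-\frac{\alpha}{2}}$, or $t_{m-i}^{\frac{-1-\epsilon}{2}}$ blow up; there one must confirm that the stronger $\Delta t$-power from \lemref{Mainmulti} still dominates these singularities, which is precisely the reason the exponent in \lemref{Mainmulti} must be taken to be $0$ for the non-positive multipliers and $\frac{\gamma}{2}$ for the positive one in part (iv).
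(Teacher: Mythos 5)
Your proof is correct, and it rests on the same two pillars as the paper's own proof — \lemref{Mainmulti} for the gap between the discrete evolution family and the frozen semigroups, and the \lemref{lemma8}-based telescoping machinery for the gap between frozen semigroups and resolvents — but the decomposition is genuinely different. The paper inserts \emph{two} frozen-semigroup intermediates and splits the error into four terms: your $T_1$ (i.e.\ \lemref{Mainmulti} with exponent $0$); two one-step index-shifting terms, $\left(\mathbf{I}-e^{-\Delta tA_{h,m}}\right)\left(\prod_{j=i-1}^{m-1}e^{-\Delta tA_{h,j}}\right)P_hv$ and $\left(\prod_{j=i}^{m}e^{-\Delta tA_{h,j}}\right)\left(\mathbf{I}-e^{-\Delta tA_{h,i-1}}\right)P_hv$, each dispatched by a single smoothing estimate combined with \lemref{lemma5} and \lemref{lemma7} (ii); and finally the \emph{shifted-index} comparison of $\prod_{j=i}^{m}e^{-A_{h,j}\Delta t}$ with $\prod_{j=i-1}^{m-1}S^j_{h,\Delta t}$, which is literally \lemref{lemma10} and therefore costs nothing. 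Your two-term split instead leaves the \emph{same-index} comparison $T_2$, which \lemref{lemma10} does not cover verbatim, so you must rerun its telescoping argument; you are right that this rerun is simpler (the same-index one-step errors $e^{-A_{h,\ell}\Delta t}-S^{\ell}_{h,\Delta t}$ are handled by \lemref{lemma8} alone, without the resolvent-difference split of \lemref{lemma7} (iii)), but the paper's arrangement avoids redoing any telescoping at all, at the price of the two trivial extra terms. What your route buys is an explicit and correct treatment of the right multipliers, which the paper compresses into ``the proofs of (ii)--(iv) are similar'': exponent $0$ in \lemref{Mainmulti} for the bounded non-positive powers in parts (i)--(iii), exponent $\gamma/2$ in part (iv) after exchanging $A_{h,i}$ for $A_{h,i-1}$ via \lemref{lemma2}, together with the domination check for $T_1$ — correctly flagged as the delicate point when $m-i$ is small, and valid since $t_{m-i+1}\leq 2t_{m-i}$ for $i<m$, so that $\Delta t^{1-\gamma/2-\epsilon}\,t_{m-i+1}^{-\gamma/2+\epsilon}\leq C\Delta t^{(1-\gamma-\epsilon)/2}\,t_{m-i}^{-(1+\epsilon)/2}$ uniformly in $1\leq i<m\leq M$.
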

\begin{proof}
We only prove (i) since the proofs of (ii)--(iv) are similar. Adding and subtracting terms yields the following decomposition
\begin{eqnarray}
\label{merci1}
&&\left(\prod_{j=i}^{m}U_h(t_j, t_{j-1})\right)P_hv-\left(\prod_{j=i-1}^{m-1}S_{h,\Delta t}^j\right)P_hv\nonumber\\
&=&\left[\left(\prod_{j=i}^{m}U_h(t_j, t_{j-1})\right)-\left(\prod_{j=i-1}^{m-1}e^{\Delta tA_{h,j}}\right)\right]P_hv+\left[\left(\prod_{j=i-1}^{m-1}e^{\Delta tA_{h,j}}\right)-\left(\prod_{j=i-1}^{m}e^{\Delta tA_{h,j}}\right)\right]P_hv\nonumber\\
&+&\left[\left(\prod_{j=i-1}^{m}e^{\Delta tA_{h,j}}\right)-\left(\prod_{j=i}^{m}e^{\Delta tA_{h,j}}\right)\right]P_hv+\left[\left(\prod_{j=i}^{m}e^{\Delta tA_{h,j}})\right)-\left(\prod_{j=i-1}^{m-1}S_{h,\Delta t}^j\right)\right]P_hv\nonumber\\
&=:&K_1+K_2+K_3+K_4.
\end{eqnarray}
Using \lemref{Mainmulti} with $\alpha=0$ yields
\begin{eqnarray}
\label{merci2}
\Vert K_1\Vert_{L(H)}\leq \left\Vert\left(\prod_{j=i}^{m}U_h(t_j, t_{j-1})\right)-\left(\prod_{j=i-1}^{m-1}e^{\Delta tA_{h,j}}\right)\right\Vert_{L(H)}\Vert P_hv\Vert\leq C\Delta t^{1-\epsilon}\Vert v\Vert_{\alpha}.
\end{eqnarray}
Using  \lemref{lemma5}  yields
\begin{eqnarray}
\label{merci3}
\Vert K_2\Vert_{L(H)}&\leq& \left\Vert\left(\mathbf{I}-e^{\Delta tA_{h,m}}\right)\left(\prod_{j=i-1}^{m-1}e^{\Delta tA_{h,j}}\right)P_hv\right\Vert_{L(H)}\nonumber\\
&\leq& \left\Vert\left(\mathbf{I}-e^{\Delta tA_{h,m}}\right)A_{h,m}^{-\frac{\alpha}{2}}\right\Vert_{L(H)}\left\Vert A_{h,m}^{\frac{\alpha}{2}}\left(\prod_{j=i-1}^{m-1}e^{\Delta tA_{h,j}}\right)A_{h,i-1}^{-\frac{\alpha}{2}}\right\Vert_{L(H)}\left\Vert  A_{h,i-1}^{\frac{\alpha}{2}} P_hv\right\Vert_{L(H)}\nonumber\\
&\leq& C\Delta t^{\frac{\alpha}{2}}\Vert v\Vert_{\alpha}.
\end{eqnarray}
The term $K_3$ is very similar to  $K_2$. Hence along the same lines as \eqref{merci3}, one easily get
\begin{eqnarray}
\label{merci4}
\Vert K_3\Vert_{L(H)}\leq C\Delta t^{\frac{\alpha}{2}}. 
\end{eqnarray}
Employing \lemref{lemma10} yields
\begin{eqnarray}
\label{merci5}
\Vert K_4\Vert_{L(H)}&\leq& \left\Vert \left[\left(\prod_{j=i}^{m}e^{\Delta tA_{h,j}})\right)-\left(\prod_{j=i-1}^{m-1}S_{h,\Delta t}^j\right)\right]P_hv\right\Vert_{L(H)}\leq C\Delta t^{\frac{\alpha}{2}}.
\end{eqnarray}
Substituting \eqref{merci5}, \eqref{merci4}, \eqref{merci3} and \eqref{merci2} in \eqref{merci1} completes the proof of (i).
\end{proof}

With the above preparatory results, we are now ready to prove our main results.
\subsection{Proof of \thmref{mainresult1}}
\label{mainproof}
Iterating the numerical solution  \eqref{implicit1} at $t_m$ by substituting $X^h_j$, $j=1,2,\cdots,m-1$ only in the first term of \eqref{implicit1} 
by their expressions, we obtain
\begin{eqnarray}
\label{num1}
X^h_m&=&\left(\prod_{j=0}^{m-1}S_{h,\Delta t}^j\right)P_hX_0+\Delta tS^{m-1}_{h,\Delta t}P_hF\left(t_{m-1}, X^h_{m-1}\right)\nonumber\\
&+&S^{m-1}_{h,\Delta t}P_hB\left(t_{m-1}, X^h_{m-1}\right)\Delta W_{m-1}+\Delta t\sum_{i=2}^{m}\left(\prod_{j=m-i}^{m-1}S^j_{h,\Delta t}\right)P_hF\left(t_{m-i}, X^h_{m-i}\right)\nonumber\\
&+&\sum_{i=2}^{m}\left(\prod_{j=m-i}^{m-1}S^j_{h,\Delta t}\right)P_hB\left(t_{m-i}, X^h_{m-i}\right)\Delta W_{m-i}.
\end{eqnarray}
Rewritten the numerical approximation \eqref{num1} in the integral form yields
\begin{eqnarray}
\label{num1a}
X^h_m&=&\left(\prod_{j=0}^{m-1}S_{h,\Delta t}^j\right)P_hX_0+\int_{t_{m-1}}^{t_m}S^{m-1}_{h,\Delta t}P_hF\left(t_{m-1}, X^h_{m-1}\right)ds\nonumber\\
&+&\int_{t_{m-1}}^{t_m}S^{m-1}_{h,\Delta t}P_hB\left(t_{m-1}, X^h_{m-1}\right)dW(s)\nonumber\\
&+&\sum_{i=2}^{m}\int_{t_{m-i}}^{t_{m-i+1}}\left(\prod_{j=m-i}^{m-1}S^j_{h,\Delta t}\right)P_hF\left(t_{m-i}, X^h_{m-i}\right)ds\nonumber\\
&+&\sum_{i=2}^{m}\int_{t_{m-i}}^{t_{m-i+1}}\left(\prod_{j=m-i}^{m-1}S^j_{h,\Delta t}\right)P_hB\left(t_{m-i}, X^h_{m-i}\right)dW(s).
\end{eqnarray}
 Note that the mild solution of \eqref{semi1} can be written as follows:
\begin{eqnarray}
\label{mild5}
X^h(t_m)&=&U_h(t_m,t_{m-1})X^h(t_{m-1})+\int_{t_{m-1}}^{t_m}U_h(t_m,s)P_hF\left(s,X^h(s)\right)ds\nonumber\\
&+&\int_{t_{m-1}}^{t_m}U_h(t_m,s)P_hB\left(s,X^h(s)\right)dW(s).
\end{eqnarray}
Iterating the mild solution \eqref{mild5} yields
\begin{eqnarray}
\label{num2}
X^h(t_m)&=&\left(\prod_{j=1}^mU_h(t_j, t_{j-1})\right)P_hX_0+\int_{t_{m-1}}^{t_m}U_h(t_m,s)P_hF(s,X^h(s))ds\nonumber\\
&+&\int_{t_{m-1}}^{t_m}U_h(t_m,s)P_hB(s,X^h(s))dW(s)\nonumber\\
&+&\sum_{k=1}^{m-1}\int_{t_{m-k-1}}^{t_{m-k}}\left(\prod_{j=m-k+1}^{m}U_h(t_j,t_{j-1})\right)U_h(t_{m-k},s)P_hF(s, X^h(s))ds\nonumber\\
&+&\sum_{k=1}^{m-1}\int_{t_{m-k-1}}^{t_{m-k}}\left(\prod_{j=m-k+1}^{m}U_h(t_j,t_{j-1})\right)U_h(t_{m-k},s)P_hB(s,X^h(s))dW(s).
\end{eqnarray}
Subtracting \eqref{num2} from \eqref{num1a}, taking the $L^2$ norm and using   triangle inequality yields
\begin{eqnarray}
\label{num3}
\left\Vert X^h(t_m)-X^h_m\right\Vert^2_{L^2(\Omega, H)}\leq 25 \sum_{i=0}^4\Vert II_i\Vert^2_{L^2(\Omega,H)},
\end{eqnarray}
where
\begin{eqnarray*}
\label{num4}
II_0&=&\left(\prod_{j=1}^{m}U_h(t_j, t_{j-1})\right)P_hX_0-\left(\prod_{j=0}^{m-1}S_{h,\Delta t}^j\right)P_hX_0,\\
II_1&=&\int_{t_{m-1}}^{t_m}\left[U_h(t_m,s)P_hF\left(s,X^h(s)\right)-S^{m-1}_{h,\Delta t}P_hF\left(t_{m-1},X^h_{m-1}\right)\right]ds, \nonumber\\
II_2&=&\int_{t_{m-1}}^{t_m}\left[U_h(t_m,s)P_hB\left(s, X^h(s)\right)-S^{m-1}_{h,\Delta t}P_hB\left(t_{m-1}, X^h_{m-1}\right)\right]dW(s),\nonumber\\
II_3&=& \sum_{i=2}^{m}\int_{t_{m-i}}^{t_{m-i+1}}\left(\prod_{j=m-i+2}^{m}U_h(t_j, t_{j-1})\right)U_h(t_{m-i+1}, s)P_hF\left(s, X^h(s)\right)ds\nonumber\\
&&-\sum_{i=2}^{m}\int_{t_{m-i}}^{t_{m-i+1}}\left(\prod_{j=m-i}^{m-1}S^j_{h,\Delta t}\right)P_hF\left(t_{m-i},X^h_{m-i}\right)ds,\\
II_4&=&\sum_{i=2}^{m}\int_{t_{m-i}}^{t_{m-i+1}}\left(\prod_{j=m-i+2}^{m}U_h(t_j, t_{j-1})\right)U_h(t_{m-i+1},s)P_hB\left(s,X^h(s)\right)dW(s)\nonumber\\
&&-\sum_{i=2}^{m}\int_{t_{m-i}}^{t_{m-i+1}}\left(\prod_{j=m-i}^{m-1}S^j_{h,\Delta t}\right)P_hB\left(t_{m-i}, X^h_{m-i}\right)dW(s).\nonumber
\end{eqnarray*}
In the following sections, we estimate $II_i$, $i=0,\cdots,4$ separately.
\subsubsection{Estimate of $II_0$, $II_1$ and $II_2$}
Using  \lemref{reviewimportant}, it holds that
\begin{eqnarray}
\label{deb1}
\Vert II_{0}\Vert_{L^2(\Omega, H)}&\leq& \left\Vert \left(\prod_{j=1}^{m}U_h(t_j, t_{j-1}\right)P_hX_0-\left(\prod_{j=0}^{m-1}S^j_{h,\Delta t}\right)P_hX_0\right\Vert_{L^2(\Omega, H)}\nonumber\\
&\leq& C\Delta t^{\frac{\beta}{2}}\Vert A_h^{\frac{\beta}{2}}P_hX_0\Vert_{L^2(\Omega, H)}\leq C\Delta t^{\frac{\beta}{2}}.
\end{eqnarray}
The term $II_1$ can be recast in three terms as follows:
\begin{eqnarray}
\label{mar1}
II_1&=&\int_{t_{m-1}}^{t_m}U_h(t_m,s)\left[P_hF\left(s,X^h(s)\right)-P_hF\left(t_{m-1},X^h(t_{m-1})\right)\right]ds\nonumber\\
&+&\int_{t_{m-1}}^{t_m}\left[U_h(t_m,s)-S^{m-1}_{h,\Delta t}\right]P_hF\left(t_{m-1},X^h(t_{m-1})\right) ds\nonumber\\
&+&\int_{t_{m-1}}^{t_m}S^{m-1}_{h,\Delta t}\left[P_hF\left(t_{m-1},X^h(t_{m-1})\right)-P_hF\left(t_{m-1},X^h_{m-1}\right)\right]ds\nonumber\\
&:=&II_{11}+II_{12}+II_{13}.
\end{eqnarray}
Therefore using triangle inequality yields
\begin{eqnarray}
\label{mar2}
\Vert II_1\Vert_{L^2(\Omega,H)}\leq \Vert II_{11}\Vert_{L^2(\Omega,H)}+\Vert II_{12}\Vert_{L^2(\Omega,H)}+\Vert II_{13}\Vert_{L^2(\Omega,H)}.
\end{eqnarray}
Using triangle inequality \lemref{evolutionlemma} and \coref{corollary1}, it holds that
\begin{eqnarray}
\label{mar3}
\Vert II_{11}\Vert_{L^2(\Omega,H)}&\leq& C\int_{t_{m-1}}^{t_m}\Vert P_hF\left(s,X^h(s)\right)\Vert_{L^2(\Omega,H)}ds\nonumber\\
&+&C\int_{t_{m-1}}^{t_m}\Vert P_hF\left(t_{m-1}, X^h(t_{m-1})\right)\Vert_{L^2(\Omega,H)}ds\nonumber\\
&\leq& C\int_{t_{m-1}}^{t_m}ds\leq C\Delta t.
\end{eqnarray}
Using triangle inequality, Lemmas \ref{evolutionlemma}, \ref{lemma7} and \coref{corollary1}, it holds that
\begin{eqnarray}
\label{mar4}
\Vert II_{12}\Vert_{L^2(\Omega,H)}\leq C\int_{t_{m-1}}^{t_m}\Vert P_hF\left(t_{m-1}, X^h(t_{m-1})\right)\Vert_{L^2(\Omega,H)}ds
\leq C\int_{t_{m-1}}^{t_m}ds\leq C\Delta t.
\end{eqnarray}
Using \lemref{lemma7} (i) with $\alpha=0$ and \assref{assumption3}, it holds that
\begin{eqnarray}
\label{mar5}
\Vert II_{13}\Vert_{L^2(\Omega,H)}\leq C\Delta t\Vert X^h(t_{m-1})-X^h_{m-1}\Vert_{L^2(\Omega,H)}.
\end{eqnarray}
Substituting \eqref{mar5}, \eqref{mar4} and \eqref{mar3} in \eqref{mar2} yields 
\begin{eqnarray}
\label{mar6}
\Vert II_1\Vert_{L^2(\Omega,H)}\leq C\Delta t+C\Delta t\Vert X^h(t_{m-1})-X^h_{m-1}\Vert_{L^2(\Omega,H)}.
\end{eqnarray}
We recast $II_2$ in three terms as follows:
\begin{eqnarray}
\label{isen1}
II_2&=&\int_{t_{m-1}}^{t_m}U_h(t_m,s)\left[P_hB\left(s,X^h(s)\right)-P_hB\left(t_{m-1},X^h(t_{m-1})\right)\right]dW(s)\nonumber\\
&+&\int_{t_{m-1}}^{t_m}\left[U_h(t_m,s)-S^{m-1}_{h,\Delta t}\right]P_hB\left(t_{m-1},X^h(t_{m-1})\right) dW(s)\nonumber\\
&+&\int_{t_{m-1}}^{t_m}S^{m-1}_{h,\Delta t}\left[P_hB\left(t_{m-1},X^h(t_{m-1})\right)-P_hB\left(t_{m-1},X^h_{m-1}\right)\right]dW(s)\nonumber\\
&:=&II_{21}+II_{22}+II_{23}.
\end{eqnarray}
Using triangle inequality and the inequality $(a+b+c)^2\leq 9a^2+9b^2+9c^2$, $a,b,c\in\mathbb{R}$, yields
\begin{eqnarray}
\label{isen2}
\Vert II_2\Vert^2_{L^2(\Omega,H)}\leq 9\Vert II_{21}\Vert^2_{L^2(\Omega,H)}+9\Vert II_{22}\Vert^2_{L^2(\Omega,H)}+9\Vert II_{23}\Vert^2_{L^2(\Omega,H)}.
\end{eqnarray}
Using the It\^{o} isometry, \lemref{evolutionlemma}, \assref{assumption4} and \lemref{lemma3}, it holds that
\begin{eqnarray}
\label{isen3}
\Vert II_{21}\Vert^2_{L^2(\Omega,H)}&=&\int_{t_{m-1}}^{t_m}\left\Vert U_h(t_m,s)\left[P_hB\left(s,X^h(s)\right)-P_hB\left(t_{m-1},X^h(t_{m-1})\right)\right]\right\Vert^2_{L^2(\Omega,H)}ds\nonumber\\
&\leq& C\int_{t_{m-1}}^{t_m}(s-t_{m-1})^{\min(\beta,1)}ds\leq C\Delta t^{\min(\beta+1,2)}.
\end{eqnarray}
Employing the It\^{o} isometry, Lemmas \ref{evolutionlemma}, \ref{lemma7} and \coref{corollary1}, it holds that
\begin{eqnarray}
\label{isen4}
\Vert II_{22}\Vert^2_{L^2(\Omega,H)}&=&\int_{t_{m-1}}^{t_m}\left\Vert\left[U_h(t_m,s)-S^{m-1}_{h,\Delta t}\right]P_hB\left(t_{m-1},X^h(t_{m-1})\right)\right\Vert^2_{L^2(\Omega,H)}ds\nonumber\\
&\leq& C\int_{t_{m-1}}^{t_m}ds\leq C\Delta t.
\end{eqnarray}
Employing It\^{o} isometry,  Lemmas \ref{lemma7} (i) with $\alpha=0$ and \assref{assumption4} yields
\begin{eqnarray}
\label{isen5}
\Vert II_{23}\Vert^2_{L^2(\Omega,H)}&=& \int_{t_{m-1}}^{t_m}\left\Vert S^{m-1}_{h,\Delta t}\left[P_hB\left(t_{m-1},X^h(t_{m-1})\right)-P_hB\left(t_{m-1},X^h_{m-1}\right)\right]\right\Vert^2_{L^2(\Omega,H)}ds\nonumber\\
&\leq & C\Delta t\Vert X^h(t_{m-1})-X^h_{m-1}\Vert^2_{L^2(\Omega,H)}.
\end{eqnarray}
Substituting \eqref{isen5}, \eqref{isen4} and \eqref{isen3} in \eqref{isen2} yields 
\begin{eqnarray}
\label{isen6}
\Vert II_2\Vert^2_{L^2(\Omega,H)}\leq C\Delta t+C\Delta t\Vert X^h(t_{m-1})-X^h_{m-1}\Vert^2_{L^2(\Omega,H)}.
\end{eqnarray}

\subsubsection{Estimate of $II_3$}
We can recast $II_3$ in four terms as follows:
\begin{eqnarray}
&&II_3\nonumber\\
&=&\sum_{i=2}^{m}\int_{t_{m-i}}^{t_{m-i+1}}\left(\prod_{j=m-i+2}^{m}U_h(t_j, t_{j-1})\right)\left[U_h(t_{m-i+1}, s)-U_h(t_{m-i+1}, t_{m-i})\right] P_hF\left(s,X^h(s)\right)ds\nonumber\\
&+&\sum_{i=2}^{m}\int_{t_{m-i}}^{t_{m-i+1}}\left(\prod_{j=m-i+1}^{m}U_h(t_j, t_{j-1})\right)\left[P_hF\left(s,X^h(s)\right)-P_hF\left(t_{m-i+1}, X^h(t_{m-i})\right)\right]ds\nonumber\\
&+&\sum_{i=2}^{m}\int_{t_{m-i}}^{t_{m-i+1}}\left[\left(\prod_{j=m-i+1}^{m}U_h(t_j, t_{j-1})\right)-\left(\prod_{j=m-i}^{m-1}S^j_{h,\Delta t}\right)\right]P_hF\left(t_{m-i+1}, X^h(t_{m-i})\right)ds\nonumber\\
&+&\sum_{i=2}^{m}\int_{t_{m-i}}^{t_{m-i+1}}\left(\prod_{j=m-i}^{m-1}S^j_{h,\Delta t}\right)\left[P_hF\left(t_{m-i+1},X^h(t_{m-i})\right)-P_hF\left(t_{m-i}, X^h_{m-i}\right)\right]ds\nonumber\\
&:=& II_{31}+II_{32}+II_{33}+II_{34}.
\end{eqnarray}
Therefore, employing the triangle inequality yields
\begin{eqnarray}
\label{cont0}
\Vert II_3\Vert_{L^2(\Omega,H)}\leq \Vert II_{31}\Vert_{L^2(\Omega,H)}+\Vert II_{32}\Vert_{L^2(\Omega,H)}+\Vert II_{33}\Vert_{L^2(\Omega,H)}+\Vert II_{34}\Vert_{L^2(\Omega,H)}.
\end{eqnarray}
Inserting an appropriate power of $A_{h,m-i}$, using Lemmas \ref{evolutionlemma}, \ref{pazylemma} (iii) and \coref{corollary1} yields
\begin{eqnarray}
\label{cont1}
\Vert II_{31}\Vert_{L^2(\Omega,H)}&\leq& \sum_{i=2}^{m}\int_{t_{m-i}}^{t_{m-i+2}}\left\Vert\left(\prod_{j=m-i+1}^{m}U_h(t_j, t_{j-1})\right)(A_{h,m-i})^{1-\epsilon}\right\Vert_{L(H)}\nonumber\\
&&\times\Vert (A_{h,m-i})^{-1+\epsilon} U_h(t_{m-i+1}, s)(A_{h,m-i})^{1-\epsilon}\Vert_{L(H)}\nonumber\\
&& \times \left\Vert (A_{h,m-i})^{-1+\epsilon}\left(\mathbf{I}-U_h(s, t_{m-i})\right)\right\Vert_{L(H)}\Vert P_hF\left(s,X^h(s)\right)\Vert_{L^2(\Omega,H)}ds\nonumber\\
&\leq& C\sum_{i=2}^{m}\int_{t_{m-i}}^{t_{m-i+1}}\left\Vert U_h(t_m, t_{m-i})(A_{h,m-i})^{1-\epsilon}\right\Vert_{L(H)}(s-t_{m-i})^{1-\epsilon}ds\nonumber\\
&\leq& C\Delta t^{1-\epsilon}\sum_{i=2}^m\int_{t_{m-i}}^{t_{m-i+1}}t_i^{-1+\epsilon}ds\leq C\Delta t^{1-\epsilon}\sum_{i=2}^{m}\Delta t t_{i}^{-1+\epsilon}\leq C\Delta t^{1-\epsilon}.
\end{eqnarray}
Using triangle inequality, Lemmas \ref{pazylemma} (iii), \ref{evolutionlemma}, \assref{assumption3} and \lemref{lemma3} yields 
\begin{eqnarray}
\label{cont2}
&&\Vert II_{32}\Vert_{L^2(\Omega,H)}\nonumber\\
&\leq &\sum_{i=2}^{m}\int_{t_{m-i}}^{t_{m-i+1}}\left\Vert\left(\prod_{j=m-i+1}^{m}U_h(t_j, t_{j-1})\right)\right\Vert_{L(H)}\nonumber\\
&&\times \Vert P_hF\left(s,X^h(s)\right)-P_hF\left(t_{m-i+1},X^h(t_{m-i})\right)\Vert_{L^2(\Omega,H)}ds\nonumber\\
&\leq& C\sum_{i=2}^{m}\int_{t_{m-i}}^{t_{m-i+1}}\Vert U_h(t_m, t_{m-i})\Vert_{L(H)}\left[(t_{m-i+1}-s)^{\frac{\beta}{2}}+\Vert X^h(s)-X^h(t_{m-i})\Vert_{L^2(\Omega,H)}\right]ds\nonumber\\
&\leq& C\sum_{i=2}^{m}\int_{t_{m-i}}^{t_{m-i+1}}\left[(t_{m-i+1}-s)^{\frac{\beta}{2}}+(s-t_{m-i})^{\frac{\min(\beta,1)}{2}}\right] ds\leq C\Delta t^{\frac{\min(\beta,1)}{2}}.
\end{eqnarray}
Using triangle inequality, \lemref{reviewimportant},  \coref{corollary1} and the fact that $\beta<2$, it holds that
\begin{eqnarray}
\label{cont3}
\Vert II_{33}\Vert_{L^2(\Omega,H)}&\leq& \sum_{i=2}^{m}\int_{t_{m-i}}^{t_{m-i+1}}\left\Vert\left(\prod_{j=m-i+1}^{m}U_h(t_j, t_{j-1})\right)-\left(\prod_{j=m-i}^{m-1}S^j_{h,\Delta t}\right)\right\Vert_{L(H)}\nonumber\\
&&\times \Vert P_hF\left(t_{m-i+1},X^h(t_{m-i})\right)\Vert_{L^2(\Omega,H)}ds\nonumber\\
&\leq& C\sum_{i=2}^{m}\int_{t_{m-i}}^{t_{m-i+1}}t_{i-1}^{-\frac{\beta}{2}}\Delta t^{\frac{\beta}{2}}ds\leq C\Delta t^{\frac{\beta}{2}}.
\end{eqnarray}
Using \lemref{lemma7} (i) with $\alpha=0$ and \assref{assumption3} yields 
\begin{eqnarray}
\label{cont4}
\Vert II_{34}\Vert_{L^2(\Omega,H)}&\leq& \sum_{i=2}^{m}\int_{t_{m-i}}^{t_{m-i+1}}\left\Vert\left(\prod_{j=m-i}^{m-1}S^j_{h,\Delta t}\right)\right\Vert_{L(H)}\nonumber\\
&&\times\left\Vert\left[P_hF\left(t_{m-i+1},X^h(t_{m-i})\right)-P_hF\left(t_{m-i}, X^h_{m-i}\right)\right]\right\Vert_{L^2(\Omega,H)}ds\nonumber\\
&\leq& C\Delta t^{\frac{\beta}{2}}+C\Delta t\sum_{i=2}^{m}\Vert X^h(t_{m-i})-X^h_{m-i}\Vert_{L^2(\Omega,H)}.
\end{eqnarray}
Substituting \eqref{cont4}, \eqref{cont3}, \eqref{cont2} and \eqref{cont1} in \eqref{cont0} yields 
\begin{eqnarray}
\label{cont5}
\Vert II_3\Vert_{L^2(\Omega,H)}\leq C\Delta t^{\frac{\min(\beta,1)}{2}}+C\Delta t\sum_{i=2}^{m}\Vert X^h(t_{m-i})-X^h_{m-i}\Vert_{L^2(\Omega,H)}.
\end{eqnarray}

\subsubsection{Estimate of $II_4$}
We recast  $II_4$  in four terms as follows:
{\small
\begin{eqnarray}
\label{pa1}
&&II_4\nonumber\\
&=&\sum_{i=2}^{m}\int_{t_{m-i}}^{t_{m-i+1}}\left(\prod_{j=m-i+2}^{m}U_h(t_j, t_{j-1})\right)\left[U_h(t_{m-i+1}, s)-U_h(t_{m-i+1}, t_{m-i})\right] P_hB\left(s,X^h(s)\right)dW(s)\nonumber\\
&+&\sum_{i=2}^{m}\int_{t_{m-i}}^{t_{m-i+1}}\left(\prod_{j=m-i+1}^{m}U_h(t_j, t_{j-1})\right)\left[P_hB\left(s,X^h(s)\right)-P_hB\left(t_{m-i+1}, X^h(t_{m-i})\right)\right]dW(s)\nonumber\\
&+&\sum_{i=2}^{m}\int_{t_{m-i}}^{t_{m-i+1}}\left[\left(\prod_{j=m-i+1}^{m}U_h(t_j, t_{j-1})\right)-\left(\prod_{j=m-i}^{m-1}S^j_{h,\Delta t}\right)\right]P_hB\left(t_{m-i+1}, X^h(t_{m-i})\right)dW(s)\nonumber\\
&+&\sum_{i=2}^{m}\int_{t_{m-i}}^{t_{m-i+1}}\left(\prod_{j=m-i}^{m-1}S^j_{h,\Delta t}\right)\left[P_hB\left(t_{m-i+1},X^h(t_{m-i})\right)-P_hB\left(t_{m-i}, X^h_{m-i}\right)\right]dW(s)\nonumber\\
&:=& II_{41}+II_{42}+II_{43}+II_{44}.
\end{eqnarray}
}
Therefore using triangle inequality, we obtain 
\begin{eqnarray}
\label{pa2rev}
\Vert II_4\Vert^2_{L^2(\Omega,H)}\leq 16\Vert II_{41}\Vert^2_{L^2(\Omega,H)}+16\Vert II_{42}\Vert^2_{L^2(\Omega,H)}+16\Vert II_{43}\Vert^2_{L^2(\Omega,H)}+16\Vert II_{44}\Vert^2_{L^2(\Omega,H)}.
\end{eqnarray}
Using the It\^{o} isometry, inserting an appropriate power of $A_{h,m-i}$, using  Lemmas \ref{evolutionlemma}, \ref{pazylemma} (iii)  and \coref{corollary1} yields 
{\small
\begin{eqnarray}
\label{pa3}
&&\Vert II_{41}\Vert^2_{L^2(\Omega,H)}\nonumber\\
&=&\sum_{i=2}^{m}\int_{t_{m-i}}^{t_{m-i+1}}\left\Vert\left(\prod_{j=m-i+2}^{m}U_h(t_j, t_{j-1})\right)\left[U_h(t_{m-i+1}, s)\left(\mathbf{I}-U_h(s, t_{m-i})\right)\right] P_hB\left(s,X^h(s)\right)\right\Vert^2_{L^0_2}ds\nonumber\\
&\leq& C\sum_{i=2}^{m}\int_{t_{m-i}}^{t_{m-i+1}}\left\Vert\left(\prod_{j=m-i+2}^{m}U_h(t_j, t_{j-1})\right)(A_{h,m-i})^{\frac{1-\epsilon}{2}}\right\Vert^2_{L(H)}\nonumber\\
&&\times\Vert(A_{h,m-i})^{\frac{-1+\epsilon}{2}} U_h(t_{m-i+1}, s)(A_{h,m-i})^{\frac{1-\epsilon}{2}}\Vert_{L(H)}\nonumber\\
&& \times \left\Vert (A_{h,m-i})^{\frac{-1+\epsilon}{2}}\left(\mathbf{I}-U_h(s,t_{m-i})\right)\right\Vert^2_{L(H)}\Vert P_hB\left(s,X^h(s)\right)\Vert^2_{L^0_2}ds\nonumber\\
&\leq& C\sum_{i=2}^{m}\int_{t_{m-i}}^{t_{m-i+1}}\left\Vert U_h(t_m, t_{m-i+1})(A_{h,m-i})^{\frac{1-\epsilon}{2}}\right\Vert^2_{L(H)}(s-t_{m-i})^{1-\epsilon}ds\nonumber\\
&\leq& C\sum_{i=2}^{m}\int_{t_{m-i}}^{t_{m-i+1}}t_{i-1}^{-1+\epsilon}(s-t_{m-i})^{1-\epsilon}ds\leq C\Delta t^{1-\epsilon}\sum_{i=2}^{m}\Delta t t_{i-1}^{-1+\epsilon}\leq C\Delta t^{1-\epsilon}.
\end{eqnarray}
}
Using again the It\^{o} isometry, employing \lemref{pazylemma} (iii), \assref{assumption4}, Lemmas \ref{lemma3} and \ref{evolutionlemma} yields 
\begin{eqnarray}
\label{pa4}
&&\Vert II_{42}\Vert^2_{L^2(\Omega,H)}\nonumber\\
&=&\sum_{i=2}^{m}\int_{t_{m-i}}^{t_{m-i+1}}\left\Vert\left(\prod_{j=m-i+1}^{m}U_h(t_j, t_{j-1})\right)\left[P_hB\left(s,X^h(s)\right)-P_hB\left(t_{m-i+1}, X^h(t_{m-i})\right)\right]\right\Vert^2_{L^0_2}ds\nonumber\\
&\leq &\sum_{i=2}^{m}\int_{t_{m-i}}^{t_{m-i+1}}\left\Vert U_h(t_m, t_{m-i})\right\Vert^2_{L(H)} \left\Vert P_hB\left(s,X^h(s)\right)-P_hB\left(t_{m-i+1},X^h(t_{m-i})\right)\right\Vert^2_{L^0_2}ds\nonumber\\
&\leq& C\sum_{i=2}^{m}\int_{t_{m-i}}^{t_{m-i+1}}\left[(t_{m-i+1}-s)^{\beta}+\left\Vert X^h(s)-X^h(t_{m-i})\right\Vert^2_{L^2(\Omega,H)}\right]ds\nonumber\\
&\leq& C\sum_{i=2}^{m}\int_{t_{m-i}}^{t_{m-i+1}}\left[(t_{m-i+1}-s)^{\beta}+(s-t_{m-i})^{\min(\beta, 1)}\right]ds\leq C\Delta t^{\min(\beta,1)}.
\end{eqnarray}
Using the It\^{o} isometry, \lemref{reviewimportant}  and \coref{corollary1}, it holds that
{\small
\begin{eqnarray}
\label{pa5}
&&\Vert II_{43}\Vert^2_{L^2(\Omega,H)}\nonumber\\
&=&\sum_{i=2}^{m}\int_{t_{m-i}}^{t_{m-i+1}}\left\Vert\left[\left(\prod_{j=m-i+1}^{m}U_h(t_j, t_{j-1})\right)-\left(\prod_{j=m-i}^{m-1}S^j_{h,\Delta t}\right)\right]P_hB\left(t_{m-i+1}, X^h(t_{m-i})\right)\right\Vert^2_{L^0_2}ds\nonumber\\
&\leq& \sum_{i=2}^{m}\int_{t_{m-i}}^{t_{m-i+1}}\left\Vert\left(\prod_{j=m-i+1}^{m}U_h(t_j, t_{j-1})\right)-\left(\prod_{j=m-i}^{m-1}S^j_{h,\Delta t}\right)\right\Vert^2_{L(H)}\left\Vert P_hB\left(t_{m-i+1},X^h(t_{m-i})\right)\right\Vert^2_{L^0_2}ds\nonumber\\
&\leq& C\sum_{i=2}^{m}\int_{t_{m-i}}^{t_{m-i+1}}t_{i-1}^{1-\epsilon}\Delta t^{1-\epsilon}ds\leq C\Delta t^{1-\epsilon}.
\end{eqnarray}
}
Using the It\^{o} isometry,  \lemref{lemma7} (i) with $\alpha=0$ and \assref{assumption4} yields 
{\small
\begin{eqnarray}
\label{pa6}
&&\Vert II_{44}\Vert^2_{L^2(\Omega,H)}\nonumber\\
&=&\sum_{i=2}^{m}\int_{t_{m-i}}^{t_{m-i+1}}\left\Vert\left(\prod_{j=m-i}^{m-1}S^j_{h,\Delta t}\right)\left[P_hB\left(t_{m-i+1},X^h(t_{m-i})\right)-P_hB\left(t_{m-i+1}, X^h_{m-i}\right)\right]\right\Vert^2_{L^0_2}ds\nonumber\\
&\leq& \sum_{i=2}^{m}\int_{t_{m-i}}^{t_{m-i+1}}\left\Vert\left(\prod_{j=m-i}^{m-1}S^j_{h,\Delta t}\right)\right\Vert^2_{L(H)}\left\Vert P_hB\left(t_{m-i+1},X^h(t_{m-i})\right)-P_hB\left(t_{m-i+1}, X^h_{m-i}\right)\right\Vert^2_{L^0_2}ds\nonumber\\
&\leq& C\Delta t\sum_{i=2}^{m}\left\Vert X^h(t_{m-i})-X^h_{m-i}\right\Vert^2_{L^2(\Omega,H)}=C\Delta t\sum_{i=0}^{m-2}\Vert X^h(t_i)-X^h_i\Vert^2_{L^2(\Omega, H)}.
\end{eqnarray}
}
Substituting \eqref{pa6}, \eqref{pa5}, \eqref{pa4} and \eqref{pa3} in \eqref{pa2rev} yields
\begin{eqnarray}
\label{pa7}
\Vert II_4\Vert^2_{L^2(\Omega,H)}\leq C\Delta t^{\min(\beta,1-\epsilon)}+C\Delta t\sum_{i=0}^{m-1}\left\Vert X^h(t_{i})-X^h_{i}\right\Vert^2_{L^2(\Omega,H)}.
\end{eqnarray}
Substituting \eqref{pa7}, \eqref{cont5}, \eqref{isen6},  \eqref{mar6} and \eqref{deb1} in \eqref{num3} yields 
\begin{eqnarray}
\label{pa8}
\Vert X^h(t_m)-X^h_m\Vert^2_{L^2(\Omega,H)}\leq C\Delta t^{\min(\beta,1-\epsilon)}+C\Delta t\sum_{i=0}^{m-1}\left\Vert X^h(t_i)-X^h_i\right\Vert^2_{L^2(\Omega,H)}.
\end{eqnarray}
Applying the discrete Gronwall lemma to \eqref{pa8} yields
\begin{eqnarray}
\label{pa9}
\Vert X^h(t_m)-X^h_m\Vert_{L^2(\Omega,H)}\leq C\Delta t^{\frac{\min(\beta,1-\epsilon)}{2}}.
\end{eqnarray}
This completes the proof of \thmref{mainresult1} (i)-(ii).  Note that to prove \thmref{mainresult1} (iii) we only need 
to re-estimate $\Vert II_{43}\Vert^2_{L^2(\Omega,H)}$ by using \assref{assumption5} to achieve optimal convergence order $\frac{1}{2}$.

\subsection{Proof of \thmref{mainresult2}}
Let us recall that
\begin{eqnarray}
\label{fin0}
\Vert X^h(t_m)-X^h_m\Vert^2_{L^2(\Omega,H)}\leq 25\sum_{i=2}^{4}\Vert III_i\Vert_{L^2(\Omega,H)},
\end{eqnarray}
where $III_0$, $III_1$ and $III_3$ are exactly the same as $II_0$, $II_1$  and $II_3$ respectively. Therefore  \eqref{mar6} and \eqref{deb1} yields
\begin{eqnarray}
\label{fin1}
\Vert III_0\Vert_{L^2(\Omega,H)}+\Vert III_1\Vert_{L^2(\Omega,H)}\leq C\Delta t^{\frac{\beta}{2}}+C\Delta t\Vert X^h(t_{m-1})-X^h_{m-1}\Vert_{L^2(\Omega,H)}. 
\end{eqnarray}
It remains to estimate $III_3$ and the terms involving the noise, which are given below 
\begin{eqnarray}
III_2&=&\int_{t_{m-1}}^{t_m}\left[U_h(t_m,s)-S^{m-1}_{h,\Delta t}\right]P_hdW(s),\\
III_4&=&\sum_{i=2}^{m}\int_{t_{m-i}}^{t_{m-i+1}}\left(\prod_{j=m-i+1}^{m}U_h(t_j, t_{j-1})\right)U_h(t_{m-i+1}, s)P_hdW(s)\nonumber\\
&&-\sum_{i=2}^{m}\int_{t_{m-i}}^{t_{m-i+1}}\left(\prod_{j=m-i}^{m-1}S^j_{h,\Delta t}\right)P_hdW(s).
\end{eqnarray}
\subsubsection{Estimate of $III_2$}
We can split $III_2$ in two terms as follows:
\begin{eqnarray}
\label{dor1}
III_2&=&\int_{t_{m-1}}^{t_m}\left[U_h(t_m,s)-U_h(t_m, t_{m-1})\right]P_hdW(s)+\int_{t_{m-1}}^{t_m}\left[U_h(t_m, t_{m-1})-S^{m-1}_{h,\Delta t}\right]P_hdW(s)\nonumber\\
&:=&III_{21}+III_{22}.
\end{eqnarray}
Using the it\^{o} isometry, Lemmas \ref{evolutionlemma} and \ref{lemma11} (i), it holds that
\begin{eqnarray}
\label{dor2}
&&\Vert III_{21}\Vert^2_{L^2(\Omega,H)}\nonumber\\
&=&\int_{t_{m-1}}^{t_m}\left\Vert\left[U_h(t_m, s)-U_h(t_m, t_{m-1})\right]P_hQ^{\frac{1}{2}}\right\Vert^2_{\mathcal{L}_2(H)}ds\nonumber\\
&\leq& \int_{t_{m-1}}^{t_m}\left\Vert U_h(t_m,s)\left(\mathbf{I}-U_h(s, t_{m-1})\right)(A_{h,m-1})^{\frac{1-\beta}{2}}\right\Vert^2_{L(H)}\left\Vert(A_{h,m-1})^{\frac{\beta-1}{2}}P_hQ^{\frac{1}{2}}\right\Vert^2_{\mathcal{L}_2(H)}ds\nonumber\\
&\leq& \int_{t_{m-1}}^{t_m}\left\Vert U_h(t_m, s)(A_{h,m-1})^{\frac{1-\epsilon}{2}}\right\Vert^2_{L(H)}\left\Vert(A_{h, m-1})^{\frac{-1+\epsilon}{2}}\left(\mathbf{I}-U_h(s, t_{m-1})\right)(A_{h,m-1})^{\frac{1-\beta}{2}}\right\Vert^2_{L(H)}\nonumber\\
&&\times\left\Vert(A_{h,m-1})^{\frac{\beta-1}{2}}P_hQ^{\frac{1}{2}}\right\Vert^2_{\mathcal{L}_2(H)}ds\nonumber\\
&\leq& C\int_{t_{m-1}}^{t_m}(t_m-s)^{-1+\epsilon}(s-t_{m-1})^{\beta-\epsilon}ds\leq C\Delta t^{\beta-\epsilon}\int_{t_{m-1}}^{t_m}(t_m-s)^{-1+\epsilon}ds\leq C\Delta t^{\beta}.
\end{eqnarray}
Applying again the It\^{o} isometry, using \lemref{reviewimportant}  and \lemref{lemma11} (i) yields
\begin{eqnarray}
\label{dor3}
\Vert III_{22}\Vert^2_{L^2(\Omega,H)}&=&\int_{t_{m-1}}^{t_m}\left\Vert\left[U_h(t_m, t_{m-1})-S^{m-1}_{h,\Delta t}\right]P_hQ^{\frac{1}{2}}\right\Vert^2_{\mathcal{L}_2(H)}ds\nonumber\\
&\leq& C\int_{t_{m-1}}^{t_m}\Delta t^{\beta-1}\left\Vert (A_{h,m-1})^{\frac{\beta-1}{2}}P_hQ^{\frac{1}{2}}\right\Vert^2_{\mathcal{L}_2(H)}ds\nonumber\\
&\leq &C\Delta t^{\beta}.
\end{eqnarray}
Substituting \eqref{dor3}, \eqref{dor2} in \eqref{dor1} yields
\begin{eqnarray}
\label{fin2}
\Vert III_2\Vert^2_{L^2(\Omega, H)}\leq 2\Vert III_{21}\Vert^2_{L^2(\Omega,H)}+2\Vert III_{22}\Vert^2_{L^2(\Omega,H)}\leq C\Delta t^{\beta}.
\end{eqnarray}

\subsubsection{Estimate of $III_3$} 
Since $III_3$ is the same as $II_3$, it follows from \eqref{cont0} that
\begin{eqnarray}
\label{nlast0}
III_3=III_{31}+III_{32}+III_{33}+III_{34},
\end{eqnarray}
where $III_{31}$ $III_{32}$, $III_{33}$ and $III_{34}$ are respectively $II_{31}$ $II_{32}$, $II_{33}$ and $II_{34}$. Therefore from \eqref{cont1}, \eqref{cont3} and \eqref{cont4} we have 
\begin{eqnarray}
\label{nlast1}
&&\Vert III_{31}\Vert_{L^2(\Omega,H)}+\Vert III_{33}\Vert_{L^2(\Omega,H)}+\Vert III_{34}\Vert_{L^2(\Omega,H)}\nonumber\\
&\leq& C\Delta t^{\beta}+C\Delta t\sum_{i=2}^{m}\Vert X^h(t_{m-i})-X^h_{m-i}\Vert_{L^2(\Omega,H)}.
\end{eqnarray}
To achieve higher order we need to re-estimate $III_{32}$ by using the additional \assref{assumption7}. Note that $III_{32}$ can be recast as follows:
\begin{eqnarray}
\label{pa0}
&&III_{32}\nonumber\\
&=&\sum_{i=2}^{m}\int_{t_{m-i}}^{t_{m-i+1}}\left(\prod_{j=m-i+1}^{m}U_h(t_j, t_{j-1})\right)\left[P_hF\left(s,X^h(s)\right)-P_hF\left(t_{m-i+1}, X^h(t_{m-i})\right)\right]ds\nonumber\\
&=&\sum_{i=2}^{m}\int_{t_{m-i}}^{t_{m-i+1}}\left(\prod_{j=m-i+1}^{m}U_h(t_j, t_{j-1})\right)\left[P_hF\left(s,X^h(s)\right)-P_hF\left(t_{m-i+1}, X^h(s)\right)\right]ds\nonumber\\
&+&\sum_{i=2}^{m}\int_{t_{m-i}}^{t_{m-i+1}}\left(\prod_{j=m-i+1}^{m}U_h(t_j, t_{j-1})\right)\left[P_hF\left(t_{m-i+1},X^h(s)\right)-P_hF\left(t_{m-i+1}, X^h(t_{m-i})\right)\right]ds\nonumber\\
&:=&III_{321}+III_{322}.
\end{eqnarray}
Using triangle inequality, Lemmas \ref{pazylemma} (iii), \ref{evolutionlemma} and \assref{assumption3}, it holds that
\begin{eqnarray}
\label{pa1}
\Vert III_{321}\Vert_{L^2(\Omega,H)}\leq C\sum_{i=2}^{m-1}\int_{t_{m-i}}^{t_{m-i+1}}\Vert U_h(t_m, t_{m-i})\Vert_{L(H)}(t_{m-i+1}-s)^{\frac{\beta}{2}}ds\leq C\Delta t^{\frac{\beta}{2}}.
\end{eqnarray}
 For the seek of ease of notations, we set
\begin{eqnarray}
G^h_{m-i}\left(u\right):=P_hF\left(t_{m-i+1},u\right),\quad u\in H,\quad i=2,\cdots,m,\quad m=2,\cdots,M.
\end{eqnarray}
Applying  Taylor's formula in Banach space as in \cite{Arnulf3} yields
\begin{eqnarray}
\label{refait1}
G^h_{m-i}(X^h(s))-G^h_{m-i}(X^h(t_{m-i}))=I^h_{m,i}(s)\left(X^h(s)-X^h(t_{m-i})\right),
\end{eqnarray}
where $I^h_{m,i}(s)$ is defined for $t_{m-i}\leq s\leq t_{m-i+1}$ as follows:
\begin{eqnarray}
\label{refait2}
I^h_{m,i}(s):=\int_0^1\left(G^h_{m-i}\right)'\left(X^h(t_{m-i})+\lambda\left(X^h(s)-X^h(t_{m-i})\right)\right)d\lambda.
\end{eqnarray}
Using \assref{assumption7} and \lemref{lemma11} (ii), one can easily check that
\begin{eqnarray}
\label{refait2a}
\Vert I^h_{m,i}(s)\Vert_{L(H)}\leq C,\quad m\in\{1, \cdots,M\},\; 0\leq i\leq m-1,\; t_{m-i}\leq s\leq t_{m-i+1}.
\end{eqnarray}
Note that the mild solution $X^h(s)$ (with $t_{m-i}\leq s\leq t_{m-i+1}$, $i=2,\cdots, m$) can be written as follows
\begin{eqnarray}
\label{refait3}
X^h(s)=U_h(s, t_{m-i})X^h(t_{m-i})+\int_{t_{m-i}}^sU_h(s,r)P_hF(r,X^h(r))dr
+\int_{t_{m-i}}^sU_h(s,r)P_hdW(r).
\end{eqnarray}
Substituting \eqref{refait3} in \eqref{refait1} yields
\begin{eqnarray}
\label{refait4}
&&G^h_{m-i}(X^h(s))-G^h_{m-i}(X^h(t_{m-i}))\nonumber\\
&=&I^h_{m,i}(s)\left(U_h(s, t_{m-i})-\mathbf{I}\right)X^h(t_{m-i})+I^h_{m,i}(s)\int_{t_{m-i}}^sU_h(s,r)P_hF\left(X^h(r)\right)dr\nonumber\\
&+&I^h_{m,i}(s)\int_{t_{m-i}}^sU_h(s,r)P_hdW(r),\quad t_{m-i}\leq s\leq t_{m-i+1}.
\end{eqnarray}
Substituting \eqref{refait4} in the expression of $III_{322}$ (see \eqref{pa0}) yields
\begin{eqnarray}
\label{cle0}
III_{322}
&=&\sum_{i=2}^{m}\int_{t_{m-i}}^{t_{m-i+1}}\left(\prod_{j=m-i+1}^{m}U_h(t_j, t_{j-1})\right)I^h_{m,i}(s)\left(U_h(s, t_{m-i})-\mathbf{I}\right)X^h(t_{m-i})ds\nonumber\\
&+&\sum_{i=2}^{m}\int_{t_{m-i}}^{t_{m-i+1}}\left(\prod_{j=m-i+1}^{m}U_h(t_j, t_{j-1})\right)I^h_{m,i}(s)\int_{t_{m-i}}^sU_h(s, r)P_hF(r, X^h(r))drds\nonumber\\
&+&\sum_{i=2}^{m}\int_{t_{m-i}}^{t_{m-i+1}}\left(\prod_{j=m-i+1}^{m}U_h(t_j, t_{j-1})\right)I^h_{m,i}(s)\int_{t_{m-i}}^sU_h(s,r)P_hdW(r)ds\nonumber\\
&:=&III_{322}^{(1)}+III_{322}^{(2)}+III_{322}^{(3)}.
\end{eqnarray}
Inserting an appropriate power of $A_{h,m-i}$, using  Lemmas \ref{pazylemma} (iii), \ref{evolutionlemma}, \eqref{refait2a} and \lemref{lemma3}, it holds that
\begin{eqnarray}
\label{cle1}
\Vert III_{322}^{(1)}\Vert_{L^2(\Omega,H)}
&\leq&\sum_{i=2}^{m}\int_{t_{m-i}}^{t_{m-i+1}}\left\Vert\left(\prod_{j=m-i+1}^{m}U_h(t_j, t_{j-1})\right)\right\Vert_{L(H)}\Vert I^h_{m,i}(s)\Vert_{L(H)}\nonumber\\
&&\times\left\Vert\left(U_h(s,t_{m-i})-\mathbf{I}\right)(A_{h,m-i})^{-\frac{\beta}{2}+\epsilon}\right\Vert_{L(H)}\Vert (A_{h,m-i})^{\frac{\beta}{2}-\epsilon} X^h(t_{m-i})\Vert_{L^2(\Omega,H)}ds\nonumber\\
&\leq& C\sum_{i=2}^{m}\int_{t_{m-i}}^{t_{m-i+1}}\Vert U_h(t_m, t_{m-i})\Vert_{L(H)}(s-t_{m-i})^{\frac{\beta}{2}-\epsilon}ds\nonumber\\
&\leq& C\Delta t^{\frac{\beta}{2}-\epsilon}\sum_{i=2}^{m}t_i^{-1+\epsilon}\Delta t\leq C\Delta t^{\frac{\beta}{2}-\epsilon}.
\end{eqnarray}
Using Lemmas \ref{pazylemma} (iii), \ref{evolutionlemma}, \ref{lemma11} (ii), \eqref{refait2a}, \coref{corollary1}  and \eqref{smooth1} yields
\begin{eqnarray}
\label{cle2}
\Vert III_{322}^{(2)}\Vert_{L^2(\Omega,H)}
&\leq& C\sum_{i=2}^{m}\int_{t_{m-i}}^{t_{m-i+1}}\left\Vert \int_{t_{m-i}}^sU_h(s, r)P_hF(r, X^h(r))dr\right\Vert_{L^2(\Omega,H)}ds\nonumber\\
&\leq& C\sum_{i=2}^{m}\int_{t_{m-i}}^{t_{m-i+1}}(s-t_{m-i})ds\leq C\Delta t.
\end{eqnarray}
We split $III_{322}^{(3)}$ in two terms as follows
\begin{eqnarray}
\label{Raphael1}
III_{322}^{(3)}&=&\sum_{i=2}^{m}\int_{t_{m-i}}^{t_{m-i+1}}\left(\prod_{j=m-i+1}^{m}U_h(t_j, t_{j-1})\right)I^h_{m,i}(t_{m-i})\int_{t_{m-i}}^sU_h(s,r)P_hdW(r)ds\nonumber\\
&+&\sum_{i=2}^{m}\int_{t_{m-i}}^{t_{m-i+1}}\left(\prod_{j=m-i+1}^{m}U_h(t_j, t_{j-1})\right)\left(I^h_{m,i}(s)-I^h_{m,i}(t_{m-i})\right)\int_{t_{m-i}}^sU_h(s,r)P_hdW(r)ds\nonumber\\
&=:&III_{322}^{(31)}+III_{322}^{(32)}.
\end{eqnarray}
Since the expression in $III_{322}^{(31)}$ is $\mathcal{F}_{t_{m-i}}$-measurable,  the expectation of the cross-product vanishes. Using the It\^{o} isometry, triangle inequality, H\"{o}lder inequality, Lemmas \ref{pazylemma} (iii) and \ref{evolutionlemma} yields 
\begin{eqnarray}
\label{petit0}
&&\Vert III_{322}^{(31)}\Vert^2_{L^2(\Omega,H)}\nonumber\\
&=&\mathbb{E}\left[\left\Vert\sum_{i=2}^{m}\int_{t_{m-i}}^{t_{m-i+1}}\left(\prod_{j=m-i+1}^{m}U_h(t_j, t_{j-1})\right)I^h_{m,i}(t_{m-i})\int_{t_{m-i}}^sU_h(s, r)P_hdW(r) ds\right\Vert^2\right]\nonumber\\
&=&\sum_{i=2}^{m}\mathbb{E}\left[\left\Vert\int_{t_{m-i}}^{t_{m-i+1}}\int_{t_{m-i}}^s\left(\prod_{j=m-i+1}^{m}U_h(t_j, t_{j-1})\right)I^h_{m,i}(t_{m-i})U_h(s, r)P_hdW(r) ds\right\Vert^2\right]\nonumber\\
&\leq&\Delta t\sum_{i=2}^{m}\int_{t_{m-i}}^{t_{m-i+1}}\mathbb{E}\left[\left\Vert\int_{t_{m-i}}^s\left(\prod_{j=m-i+1}^{m}U_h(t_j, t_{j-1})\right)I^h_{m,i}(t_{m-i})U_h(s,r)P_hdW(r) \right\Vert^2\right]ds\nonumber\\
&\leq&C\Delta t\sum_{i=2}^{m}\int_{t_{m-i}}^{t_{m-i+1}}\int_{t_{m-i}}^s\mathbb{E}\left\Vert\left(\prod_{j=m-i+1}^{m}U_h(t_j, t_{j-1})\right)I^h_{m,i}(t_{m-i})U_h(s, r)P_hQ^{\frac{1}{2}}\right\Vert^2_{\mathcal{L}_2(H)} dr ds\nonumber\\
&\leq&C\Delta t\sum_{i=2}^{m}\int_{t_{m-i}}^{t_{m-i+1}}\int_{t_{m-i}}^s\mathbb{E}\left\Vert I^h_{m,i}(t_{m-i})U_h(s,\sigma)P_hQ^{\frac{1}{2}}\right\Vert^2_{\mathcal{L}_2(H)} dr ds.
\end{eqnarray}
Using Lemmas \ref{evolutionlemma}, \ref{lemma11} (ii) and \eqref{refait2a}  yields
\begin{eqnarray}
\label{petit1}
&&\mathbb{E}\left\Vert I^h_{m,i}(t_{m-i})U_h(s,r)P_hQ^{\frac{1}{2}}\right\Vert^2_{\mathcal{L}_2(H)}\nonumber\\
&=&\mathbb{E}\left\Vert I^h_{m,i}(s)U_h(s,r)(A_{h,m-i-1})^{\frac{1-\beta}{2}}(A_{h,m-i-1})^{\frac{\beta-1}{2}} P_hQ^{\frac{1}{2}}\right\Vert^2_{\mathcal{L}_2(H)}\nonumber\\
&\leq&\mathbb{E}\left\Vert I^h_{m,i}(s)U_h(s,r)(A_{h,m-i-1})^{\frac{1-\beta}{2}}\right\Vert^2_{L(H)}\left\Vert(A_{h,m-i-1})^{\frac{\beta-1}{2}} P_hQ^{\frac{1}{2}}\right\Vert^2_{\mathcal{L}_2(H)}\nonumber\\
&\leq&\mathbb{E}\left\Vert U_h(s,r)(A_{h,m-i-1})^{\frac{1-\beta}{2}}\right\Vert^2_{L(H)}\left\Vert(A_{h,m-i-1})^{\frac{\beta-1}{2}} P_hQ^{\frac{1}{2}}\right\Vert^2_{\mathcal{L}_2(H)}\nonumber\\
&\leq & C(s-r)^{\min(-1+\beta,0)}.
\end{eqnarray}
Substituting \eqref{petit1} in \eqref{petit0} yields 
\begin{eqnarray}
\label{cle3}
\Vert III_{322}^{(31)}\Vert^2_{L^2(\Omega,H)}\leq C\Delta t\sum_{i=2}^{m}\int_{t_{m-i}}^{t_{m-i+1}}\int_{t_{m-i}}^s(s-r)^{\min(-1+\beta,0)}dr ds\leq C\Delta t^{\min(1+\beta,2)}.
\end{eqnarray}
Using triangle inequality, Cauchy-Schwartz inequality and  the fact that $U_h(t,s)U_h(s,r)=U_h(t,r)$, $0\leq r\leq s\leq t$, yields
 \begin{eqnarray}
 \label{mach5a}
 &&\Vert III_{322}^{(32)}\Vert^2_{L^2(\Omega, H)}\\
 &\leq& m\sum_{i=2}^{m}\left\Vert\int_{t_{m-i}}^{t_{m-i+1}}U_h(t_m,t_{m-i})P_h\left(I^h_{m,i}(s)-I^h_{m,i}(t_{m-i})\right)\int_{t_{m-i}}^sU_h(s,r)P_hdW(r)ds\right\Vert^2_{L^2(\Omega, H)}\nonumber\\
 &\leq& m\Delta t\sum_{i=2}^{m}\int_{t_{m-i}}^{t_{m-i+1}}\left\Vert U_h(t_m,t_{m-i})P_h\left(I^h_{m,i}(s)-I^h_{m,i}(t_{m-i})\right)\int_{t_{m-i}}^s U_h(s,r)P_hdW(r)\right\Vert^2_{L^2(\Omega, H)}ds\nonumber\\
 &\leq& T\sum_{i=2}^{m}\int_{t_{m-i}}^{t_{m-i+1}}\left(\mathbb{E}\left\Vert U_h(t_m,t_{m-i})P_h\left(I^h_{m,i}(s)-I^h_{m,i}(t_{m-i})\right)\right\Vert^4_{L(H)}\right)^{\frac{1}{2}}\nonumber\\
 &\times&\left(\mathbb{E}\left\Vert\int_{t_{m-i}}^s U_h(s,r)P_hdW(r)\right\Vert^4\right)^{\frac{1}{2}} ds.
 \end{eqnarray}
 Using the Burkh\"{o}lder-Davis-Gundy inequality (\cite[Lemma 5.1]{Kruse1}), it follows that
 \begin{eqnarray}
 \label{mach5}
 \Vert II_{322}^{(32)}\Vert^2_{L^2(\Omega, H)}
 &\leq&T\sum_{i=2}^{m}\int_{t_{m-i}}^{t_{m-i+1}}\left(\mathbb{E}\left\Vert U_h(t_m,t_{m-i})P_h\left(I^h_{m,i}(s)-I^h_{m,i}(t_{m-i})\right)\right\Vert^4_{L(H)}\right)^{\frac{1}{2}}\nonumber\\
 &\times&\left(\int_{t_{m-i}}^s\left\Vert  U_h(s,r)P_hQ^{\frac{1}{2}}\right\Vert^2_{\mathcal{L}_2(H)}dr\right) ds.
 \end{eqnarray}
Using Lemmas \ref{lemma11} and \ref{evolutionlemma}, it holds that
 \begin{eqnarray}
 \label{mach6a}
 \left\Vert U_h(s,r)P_hQ^{\frac{1}{2}}\right\Vert^2_{\mathcal{L}_2(H)}
 &\leq&\left\Vert U_h(s,r)(-A_h(r))^{\frac{1-\beta}{2}}(-A_h(r))^{\frac{\beta-1}{2}} P_hQ^{\frac{1}{2}}\right\Vert^2_{\mathcal{L}_2(H)}\nonumber\\
 &\leq&\left\Vert U_h(s,r)(-A_h(r))^{\frac{1-\beta}{2}}\right\Vert^2_{L(H)}\left\Vert (-A_h(r))^{\frac{\beta-1}{2}} P_hQ^{\frac{1}{2}}\right\Vert^2_{\mathcal{L}_2(H)}\nonumber\\
 &\leq& C(s-r)^{\min(0, \beta-1)}.
 \end{eqnarray} 
Using  \lemref{evolutionlemma}, it holds that
 \begin{eqnarray}
 \label{mach6b}
 &&\mathbb{E}\left\Vert U_h(t_m,t_{m-i})P_h\left(I^h_{m,i}(s)-I^h_{m,i}(t_{m-i})\right)\right\Vert_{L(H)}\nonumber\\
 &\leq&\left\Vert U_h(t_m,t_{m-i})(-A_h(s))^{\frac{\eta}{2}}\right\Vert_{L(H)}\mathbb{E}\left\Vert(-A_h(s))^{-\frac{\eta}{2}}\left(I^h_{m,i}(s)-I^h_{m,i}(t_{m-i})\right)\right\Vert_{L( H)}\nonumber\\
 &\leq& Ct_i^{-\frac{\eta}{2}}\mathbb{E}\left\Vert (-A_h(s))^{-\frac{\eta}{2}}\left(I^h_{m,i}(s)-I^h_{m,i}(t_{m-i})\right)\right\Vert_{L(H)}.
 \end{eqnarray}
 From the definition of $I^h_{m,i}(s)$ \eqref{refait2}, by using  \lemref{lemma11}  we arrive at 
 \begin{eqnarray}
 \label{mach7}
 &&\Vert (-A_h(s))^{-\frac{\eta}{2}}\left(I^h_{m,i}(s)-I^h_{m,i}(t_{m-i}\right)\Vert_{\mathcal{L}(H)}\nonumber\\
&\leq& \int_0^1\left\Vert (-A_h(s))^{-\frac{\eta}{2}}P_h\left( F'\left(t_{m-i},X^h(t_{m-i})+\lambda\left(X^h(s)-X^h(t_{m-i})\right)\right)\right.\right.\nonumber\\
&&\left.\left.-F'\left(t_{m-i}, X^h(t_{m-i})\right)\right)
\right\Vert_{\mathcal{L}(H)}d\lambda\nonumber\\
&\leq& \int_0^1\left\Vert (-A(s))^{-\frac{\eta}{2}}\left( F'\left(t_{m-i},X^h(t_{m-i})+\lambda\left(X^h(s)-X^h(t_{m-i})\right)\right)\right.\right.\nonumber\\
&&\left.\left.-F'\left(t_{m-k-1}, X^h(t_{m-i})\right)\right)\right\Vert_{\mathcal{L}(H)}d\lambda\nonumber\\
&\leq& C\int_0^1\lambda\Vert X^h(s)-X^h(t_{m-i})\Vert d\lambda\nonumber\\
&\leq & C\Vert X^h(s)-X^h(t_{m-i})\Vert.
 \end{eqnarray}
 Substituting \eqref{mach7} in \eqref{mach6b} and   using \lemref{lemma3} yields
 \begin{eqnarray}
 \label{mach8}
 \mathbb{E}\left\Vert U_h(t_m,t_{m-i})P_h\left(I^h_{m,i}(s)-I^h_{m,i}(t_{m-i}\right)\right\Vert^4_{L(H)}
 &\leq& Ct_i^{-2\eta}\mathbb{E}\Vert X^h(s)-X^h(t_{m-i})\Vert^4\nonumber\\
 &\leq& Ct_i^{-2\eta}(s-t_{m-i})^{\min(2, 2\beta)}.
 \end{eqnarray}
 Substituting \eqref{mach8} and \eqref{mach6a} in \eqref{mach5} yields
 \begin{eqnarray}
 \label{mach9}
 \Vert II_{322}^{(32)}\Vert^2_{L^2(\Omega, H)}
 &\leq& C\sum_{i=2}^{m}\int_{t_{m-i}}^{t_{m-i+1}}\int_{t_{m-i}}^st_i^{-\eta}(s-r)^{\min(0, \beta-1)}(s-t_{m-i})^{\min(1, \beta)}drds\nonumber\\
 &\leq&  C\sum_{i=2}^{m}\int_{t_{m-i}}^{t_{m-i+1}}t_i^{-\eta}(s-t_{m-i})^{\min(2, 2\beta)}ds\nonumber\\
 &\leq& C\Delta t^{\min(2, 2\beta)}\sum_{i=2}^{m}\int_{t_{m-i}}^{t_{m-i+1}}t_i^{-\eta}ds\leq C\Delta t^{\min(2, 2\beta)}.
 \end{eqnarray}
 Substituting \eqref{mach9} and \eqref{cle3} in \eqref{Raphael1} yields
 \begin{eqnarray}
 \label{mach10}
 \Vert II_{322}^{(3)}\Vert_{L^2(\Omega, H)}\leq C\Delta t^{\frac{\beta}{2}}.
 \end{eqnarray}
Substituting  \eqref{mach10}, \eqref{cle2} and \eqref{cle1} in \eqref{cle0} yields
\begin{eqnarray}
\label{pa2}
\Vert III_{322}\Vert_{L^2(\Omega,H)}
&\leq& \Vert III_{322}^{(1)}\Vert_{L^2(\Omega,H)}+\Vert III_{322}^{(2)}\Vert_{L^2(\Omega,H)}+\Vert III_{322}^{(3)}\Vert_{L^2(\Omega,H)}+\Vert III_{322}^{(4)}\Vert_{L^2(\Omega,H)}\nonumber\\
&\leq &C\Delta t^{\frac{\beta}{2}-\epsilon}.
\end{eqnarray}
Substituting \eqref{pa2} and \eqref{pa1} in \eqref{pa0} yields
\begin{eqnarray}
\label{nlast2}
\Vert III_{32}\Vert_{L^2(\Omega,H)}\leq \Vert III_{321}\Vert_{L^2(\Omega,H)}+\Vert III_{322}\Vert_{L^2(\Omega,H)}\leq C\Delta t^{\frac{\beta}{2}-\epsilon}.
\end{eqnarray}
Substituting \eqref{nlast2} and \eqref{nlast1} in \eqref{nlast0} yields 
\begin{eqnarray}
\label{fin8}
\Vert III_3\Vert_{L^2(\Omega,H)}\leq C\Delta t^{\frac{\beta}{2}-\epsilon}+C\Delta t\sum_{i=1}^{m-1}\Vert X^h(t_i)-X^h_i\Vert_{L^2(\Omega, H)}.
\end{eqnarray}
\subsubsection{Estimate of $III_4$}
We can recast $III_4$ in two terms as follows
\begin{eqnarray}
\label{ter1}
 III_4&=&\sum_{i=2}^{m}\int_{t_{m-i}}^{t_{m-i+1}}\left(\prod_{j=m-i+2}^{m}U_h(t_j, t_{j-1})\right)\left(U_h(t_{m-i+1}, s)-U_h(t_{m-i+1}, t_{m-i})\right)P_hdW(s)\nonumber\\
&+&\sum_{i=2}^{m}\int_{t_{m-i}}^{t_{m-i+1}}\left[\left(\prod_{j=m-i+1}^{m}U_h(t_j, t_{j-1})\right)-\left(\prod_{j=m-i}^{m-1}S^j_{h,\Delta t}\right)\right]P_hdW(s)\nonumber\\
&:=&III_{41}+III_{42}.
\end{eqnarray}
Using the It\^{o} isometry, Lemmas \ref{pazylemma} (iii), \ref{evolutionlemma} and \ref{lemma11} (i)  yields 
\begin{eqnarray}
\label{ter2}
&&\Vert III_{41}\Vert^2_{L^2(\Omega,H)}\nonumber\\
&=&\sum_{i=2}^{m}\int_{t_{m-i}}^{t_{m-i+1}}\left\Vert\left(\prod_{j=m-i+1}^{m}U_h(t_j, t_{j-1})\right)\left(U_h(t_{m-i+1}, s)-U_h(t_{m-i+1}, t_{m-i})\right)P_hQ^{\frac{1}{2}}\right\Vert^2_{\mathcal{L}_2(H)}ds\nonumber\\
&\leq&C\sum_{i=2}^{m}\int_{t_{m-i}}^{t_{m-i+1}}\left\Vert\left(\prod_{j=m-i+1}^{m-1}U_h(t_j, t_{j-1})\right)\left(\mathbf{I}-U_h(s,t_{m-i})\right)(A_{h,m-1})^{\frac{1-\beta}{2}}\right\Vert^2_{L(H)}\nonumber\\
&&\times\left\Vert (A_{h,m-1})^{\frac{\beta-1}{2}}U_h(t_{m-i+1},s)(A_{h,m-1})^{\frac{1-\beta}{2}}\right\Vert_{L(H)}\left\Vert (A_{h,m-1})^{\frac{\beta-1}{2}} P_hQ^{\frac{1}{2}}\right\Vert^2_{\mathcal{L}_2(H)}ds\nonumber\\
&\leq&C\sum_{i=2}^{m}\int_{t_{m-i}}^{t_{m-i+1}}\left\Vert\left(\prod_{j=m-i+1}^{m}U_h(t_j, t_{j-1})\right)(A_{h,m-1})^{\frac{1-\epsilon}{2}}\right\Vert^2_{L(H)}\nonumber\\
&&\times\left\Vert(A_{h,m-1})^{\frac{-1+\epsilon}{2}}\left(\mathbf{I}-U_h(s,t_{m-i})\right)(A_{h,m-1})^{\frac{1-\beta}{2}}\right\Vert^2_{L(H)}ds\nonumber\\
&\leq& C\sum_{i=2}^{m}\int_{t_{m-i}}^{t_{m-i+1}}\left\Vert U_h(t_m, t_{m-i})(A_{h,m-1})^{\frac{1-\epsilon}{2}}\right\Vert^2_{L(H)}(s-t_{m-i})^{\beta-\epsilon}ds\nonumber\\
&\leq& C\Delta t^{\beta-\epsilon}\sum_{i=2}^{m}t_i^{-1+\epsilon}\Delta t\leq C\Delta t^{\beta-\epsilon}.
\end{eqnarray}
Using again the It\^{o} isometry, \lemref{lemma11} (i) yields
\begin{eqnarray}
\label{ter3}
&&\Vert III_{42}\Vert^2_{L^2(\Omega,H)}\\
&=&\sum_{i=2}^{m}\int_{t_{m-i}}^{t_{m-i+1}}\left\Vert \left[\left(\prod_{j=m-i+1}^{m}U_h(t_j, t_{j-1})\right)-\left(\prod_{j=m-i}^{m-1}S^j_{h,\Delta t}\right)\right]P_hQ^{\frac{1}{2}}\right\Vert^2_{\mathcal{L}_2(H)}ds\nonumber\\
&\leq&\sum_{i=2}^{m}\int_{t_{m-i}}^{t_{m-i+1}}\left\Vert \left[\left(\prod_{j=m-i+1}^{m}U_h(t_j, t_{j-1})\right)-\left(\prod_{j=m-i}^{m-1}S^j_{h,\Delta t}\right)\right](A_{h,m-i})^{\frac{1-\beta}{2}}\right\Vert^2_{L(H)}\nonumber\\
&&\times\left\Vert(A_{h,m-i})^{\frac{\beta-1}{2}} P_hQ^{\frac{1}{2}}\right\Vert^2_{\mathcal{L}_2(H)}ds\nonumber\\
&\leq&C\sum_{i=2}^{m}\int_{t_{m-i}}^{t_{m-i+1}}\left\Vert \left[\left(\prod_{j=m-i+1}^{m}U_h(t_j, t_{j-1})\right)-\left(\prod_{j=m-i}^{m-1}S^j_{h,\Delta t}\right)\right](A_{h,m-i})^{\frac{1-\beta}{2}}\right\Vert^2_{L(H)}ds.\nonumber
\end{eqnarray}
If $\beta<1$, then applying \lemref{reviewimportant} (iii) yields
\begin{eqnarray}
\label{ter4}
\Vert III_{42}\Vert^2_{L^2(\Omega,H)}
\leq C\sum_{i=2}^{m}\int_{t_{m-i}}^{t_{m-i+1}}\Delta t^{\beta-\epsilon} t_{i-1}^{-1+\epsilon}ds\leq C\Delta t^{\beta-\epsilon}\sum_{i=2}^{m}t_{i-1}^{1-\epsilon}\Delta t\leq C\Delta t^{\beta-\epsilon}.
\end{eqnarray}
If $\beta\geq 1$, then employing \lemref{reviewimportant} (ii), it follows from \eqref{ter3} that
\begin{eqnarray}
\label{ter5}
\Vert III_{42}\Vert^2_{L^2(\Omega,H)}\leq C\sum_{i=2}^{m}\int_{t_{m-i}}^{t_{m-i+1}}\Delta t^{\beta-\epsilon}t_{i-1}^{-1+\epsilon}ds\leq C\Delta t^{\beta-\epsilon}.
\end{eqnarray}
Therefore for all $\beta\in[0,2]$, it holds that
\begin{eqnarray}
\label{ter6}
\Vert III_{42}\Vert^2_{L^2(\Omega,H)}\leq C\Delta t^{\beta-\epsilon}.
\end{eqnarray}
Substituting \eqref{ter6} and \eqref{ter2} in \eqref{ter1} yields 
\begin{eqnarray}
\label{fin9}
\Vert III_4\Vert^2_{L^2(\Omega,H)}\leq 2\Vert III_{41}\Vert^2_{L^2(\Omega,H)}+2\Vert III_{42}\Vert^2_{L^2(\Omega,H)}\leq C\Delta t^{\beta-\epsilon}.
\end{eqnarray}
Substituting \eqref{fin9}, \eqref{fin8}, \eqref{fin2} and \eqref{fin1} in \eqref{fin0} yields 
\begin{eqnarray}
\label{fin10}
\Vert X^h(t_m)-X^h_m\Vert^2_{L^2(\Omega,H)}\leq C\Delta t^{\beta-\epsilon}+C\Delta t\sum_{i=0}^{m-1}\Vert X^h(t_{i})-X^h_{i}\Vert^2_{L^2(\Omega,H)}.
\end{eqnarray}
Applying the discrete Gronwall lemma to \eqref{fin10} yields
\begin{eqnarray*}
\Vert X^h(t_m)-X^h_m\Vert_{L^2(\Omega,H)}\leq C\Delta t^{\frac{\beta}{2}-\epsilon}.
\end{eqnarray*}
This completes the proof of \thmref{mainresult2}.

\section{Numerical experiments}
\label{numexperiment}
\subsection{Additive noise}
We consider the stochastic reaction diffusion equation 
\begin{eqnarray}
\label{linear}
  dX=[D(t) \varDelta X -k(t) X]dt+ dW, 
\qquad \text{given } \quad X(0)=X_{0},
\end{eqnarray}
in the time interval $[0,T]$ with diffusion coefficient $ D(t)=(1/10)(1+e^{-t})$ and reaction rate $k(t)=1$
on homogeneous Neumann boundary conditions on the domain
$\Lambda=[0,L_{1}]\times [0,L_{2}]$. We take $L_1=L_2=1$.
Our function $F(t,u)=k(t)u $ is linear and obviously satisfies \assref{assumption3}. 
Since $F(t, u)$ is linear on the second variable, it holds that $F'(t,u)v=k(t)v$ for all $u, v\in L^2(\Lambda)$, where $F'$ stands
for the differential  with respect to the second variable. 
Therefore $\Vert F'(t,u)\Vert_{L(H)}\leq\vert k(t)\vert =1$ for all $u\in L^2(\Lambda)$, hence \assref{assumption7} is fulfilled.
In general we are interested in nonlinear $F$. However, for this linear
system we can find a good approximation of the  exact solution to compare our numerics to.
The eigenfunctions $\{e_{i}^{(1)}\otimes e_{j}^{(2)}\}_{i,j\geq 0}
$ of the operator $\varDelta$ here are given by 
\begin{eqnarray}
e_{0}^{(l)}=\sqrt{\dfrac{1}{L_{l}}},\;\;\;\lambda_{0}^{(l)}=0,\;\;\;
e_{i}^{(l)}=\sqrt{\dfrac{2}{L_{l}}}\cos(\lambda_{i}^{(l)}x),\;\;\;\lambda_{i}^{(l)}=\dfrac{i
  \,\pi }{L_{l}},
\end{eqnarray}
where $l \in \left\lbrace 1, 2 \right\rbrace$ and  $i=\{1, 2, 3, \cdots\}$
with the corresponding eigenvalues $ \{\lambda_{i,j}\}_{i,j\geq 0} $ given by 
$\lambda_{i,j}= (\lambda_{i}^{(1)})^{2}+ (\lambda_{j}^{(2)})^{2}.$
The linear operator  $A(t)=D(t)\varDelta$ has the same eigenfunctions as $\varDelta$, but  with eigenvalues $\{D(t)\lambda_{i,j}\}_{i,j\geq 0}$.
Clearly we have $\mathcal{D}(A(t))=\mathcal{D}(A(0))$ and $\mathcal{D}((A(t))^{\alpha})=\mathcal{D}((A(0))^{\alpha})$ for all $t\in[0,T]$ and $0\leq \alpha\leq 1$.
Since $D(t)$ is bounded below by $(1/10)(1+e^{-T})$, it follows that the ellipticity condition \eqref{ellip} holds and therefore as a consequence of 
the analysis in Section \ref{numerics}, it follows that $A(t)$ are uniformly sectorial.
Obviously \assref{assumption2} is  also fulfilled.
We also used
\begin{eqnarray}
\label{noise2}
 q_{i,j}=\left( i^{2}+j^{2}\right)^{-(\beta +\delta)}, \, \beta>0
\end{eqnarray} 
in the representation \eqref{noise} for some small $\delta>0$. Here
the noise and the linear operator are supposed to have the same
eigenfunctions. We obviously have 
\begin{eqnarray}
\underset{(i,j) \in \mathbb{N}^{2}}{\sum}\lambda_{i,j}^{\beta-1}q_{i,j}<  \pi^{2}\underset{(i,j) 
\in \mathbb{N}^{2}}{\sum} \left( i^{2}+j^{2}\right)^{-(1+\delta)} <\infty,
\end{eqnarray}
thus \assref{assumption6} 
is satisfied.  In  our simulations, we take
$\beta\in \{1,1.5,2\}$, with $\delta=0.001$. The  close form  of  the exact solution of \eqref{linear} is known. Indeed, using the representation of the noise in \eqref{noise},
the decomposition of \eqref{linear} in each eigenvector node yields
the following  Ornstein-Uhlenbeck process 
\begin{eqnarray}
\label{exact}
 dX_{i}=-(D(t) \lambda_{i}+k(t))X_{i}dt+ \sqrt{q_{i}}d\beta_{i}(t),\qquad i \in \mathbb{N}^{2}.
\end{eqnarray}
This is a Gaussian process with the mild solution
\begin{eqnarray}
X_{i}(t)= e^{- \int_{0}^{t}b_{i}(s)ds} \left[ X_{i}(0)+  \sqrt{q_{i}}\int_{0}^{t}e^{ \int_{0}^{s} b_{i}(y)dy} d \beta_{i}(s)\right],\; b_{i}(t)=D(t) \lambda_{i}+k(t).
\end{eqnarray}
Applying the It\^{o} isometry yields the  following variance of $X_{i}(t)$
\begin{eqnarray}
\label{exact2}
 \text{Var}(X_{i}(t))=q_{i}\, e^{- \int_{0}^{t}\,2\,b_{i}(s)ds}  \left( \int_{0}^{t}e^{\int_{0}^{s} \,2\,b_{i}(y)dy} ds \right).
\end{eqnarray}
During simulation, we compute the exact solution recurrently as 
\begin{eqnarray}
\label{exact1}
  X_{i}^{m+1}  = e^{- \int_{t_m}^{t_{m+1}}b_{i}(s)ds} X_{i}^m +\left(q_{i}\, e^{- \int_{t_m}^{t_{m+1}}\,2\,b_{i}(s)ds}  \left( \int_{t_m}^{t_{m+1}}e^{\int_{t_m}^{s} \,2\,b_{i}(y)dy} ds \right) \right)^{1/2}R_{i,m},
  \end{eqnarray}
where $R_{i,m}$ are independent, standard normally distributed random
variables with mean $0$ and variance $1$.
Note that  the integrals involved in \eqref{exact1} are computed exactly  for the first integral and accurately appoximated for the second  integral.
 \begin{figure}[!ht]
 \begin{center}
 \includegraphics[width=0.70\textwidth]{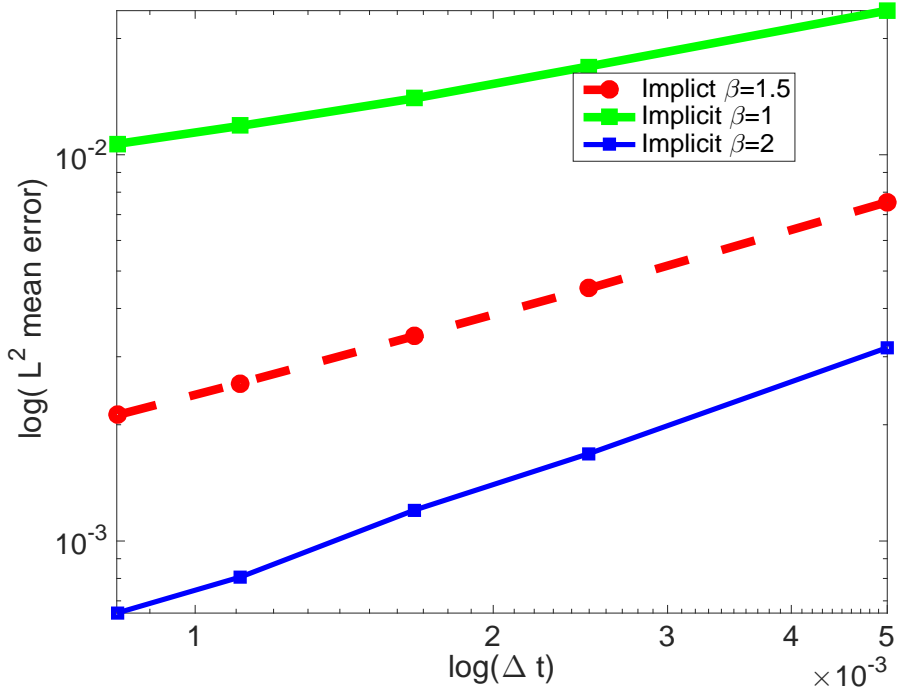}
  \end{center}
 \caption{Convergence of the  implicit scheme for $\beta=1$, $\beta=1.5$ and $\beta=2$ in \eqref{noise2} for SPDE \eqref{linear}.
 The orders of convergence in time  are $0.47$  for $\beta=1$, $0.72$ for $\beta=1.5$ and $0.93$ for $\beta=2$. The total number of samples used is $100$.}
 \label{FIGI}
 \end{figure}
 In \figref{FIGI}, we can observe the convergence of the the implicit scheme for three noise's parameters.
 Indeed, the order of convergence in time is $0.47$  for $\beta=1$, $0.721$ for $\beta=1.5$ and $0.93$ for $\beta=2$. 
 These orders are close to the theoretical  orders $0.5$, $0.75$ and $1$ obtained in \thmref{mainresult2} for $\beta=1$, $\beta=1.5$ and $\beta=2$ respectively.
\subsection{Multiplicative noise and application in porous media  flow}
We consider the following  stochastic  reactive dominated advection
diffusion equation  with  constant diagonal difussion tensor  
\begin{eqnarray}
\label{reactiondif1}
dX=\left[\left(1+e^{-t}\right) \left(\varDelta X-\nabla \cdot(\mathbf{q}X)\right)-\dfrac{e^{-t} X}{\vert X\vert +1}\right]dt+XdW,
\end{eqnarray}
with  mixed Neumann-Dirichlet boundary conditions on $\Lambda=[0,L_1]\times[0,L_2]$. 
The Dirichlet boundary condition is $X=1$ at $\Gamma=\{ (x,y) :\; x =0\}$ and 
we use the homogeneous Neumann boundary conditions elsewhere.
The eigenfunctions $ \{e_{i,j} \} =\{e_{i}^{(1)}\otimes e_{j}^{(2)}\}_{i,j\geq 0}
$ of  the covariance operator $Q$ are the same as for  Laplace operator $-\varDelta$  
with homogeneous boundary condition and we also use the noise representation \eqref{noise2}.
In our simulations, we take $\beta\in\{1.5, 2\}$ and $\delta=0.001$.
In \eqref{nemistekii1}, we take $b(x,u)=u$, $x\in \Lambda$ and $u\in \mathbb{R}$. Therefore, from \cite[Section 4]{Arnulf1} it follows that the operator  $B$ defined by \eqref{nemistekii1}
fulfills  Assumptions  \ref{assumption4} and \ref{assumption5}.  The function $F$ is given by $F(t,v)= -\dfrac{e^{-t} v}{1+\vert v\vert}$, $t\in[0, T]$, $v\in H$ and obviously satisfies \assref{assumption3}. The nonlinear operator $A(t)$ is given by 
\begin{eqnarray}
A(t)=(1+e^{-t})\left(\varDelta(.)-\nabla. \mathbf{q}(.)\right),\quad t\in[0, T],
\end{eqnarray}
where $\mathbf{q}$ is the Darcy velocity obtained as in \cite{Antonio1}.
Clearly $\mathcal{D}(A(t))=\mathcal{D}(A(0))$, $t\in[0, T]$ and $\mathcal{D}((A(t))^{\alpha})=\mathcal{D}((A(0))^{\alpha})$, $t\in[0, T]$, $0\leq \alpha\leq 1$. 
The  function $q_{i,j}(x,t)$ defined in \eqref{family} is given by $q_{i,j}(x,t)=1+e^{-t}$. Since $q_{i,j}(x,t)$ is bounded below by $1+e^{-T}$, it follows that the ellipticity  condition \eqref{ellip} holds and therefore as a consequence of \secref{numerics}, it follows that $A(t)$ is sectorial. Obviously \assref{assumption2} is fulfills. 
\begin{figure}[!ht]
 \begin{center}
 (a)
 \includegraphics[width=0.7\textwidth]{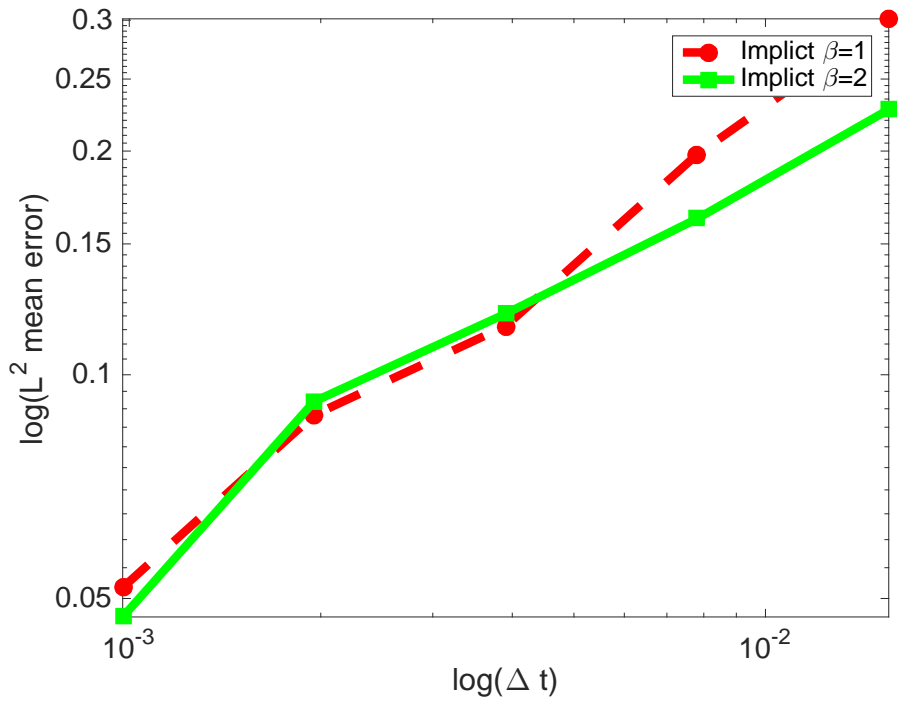}\\
 (b) 
 \includegraphics[width=0.7\textwidth]{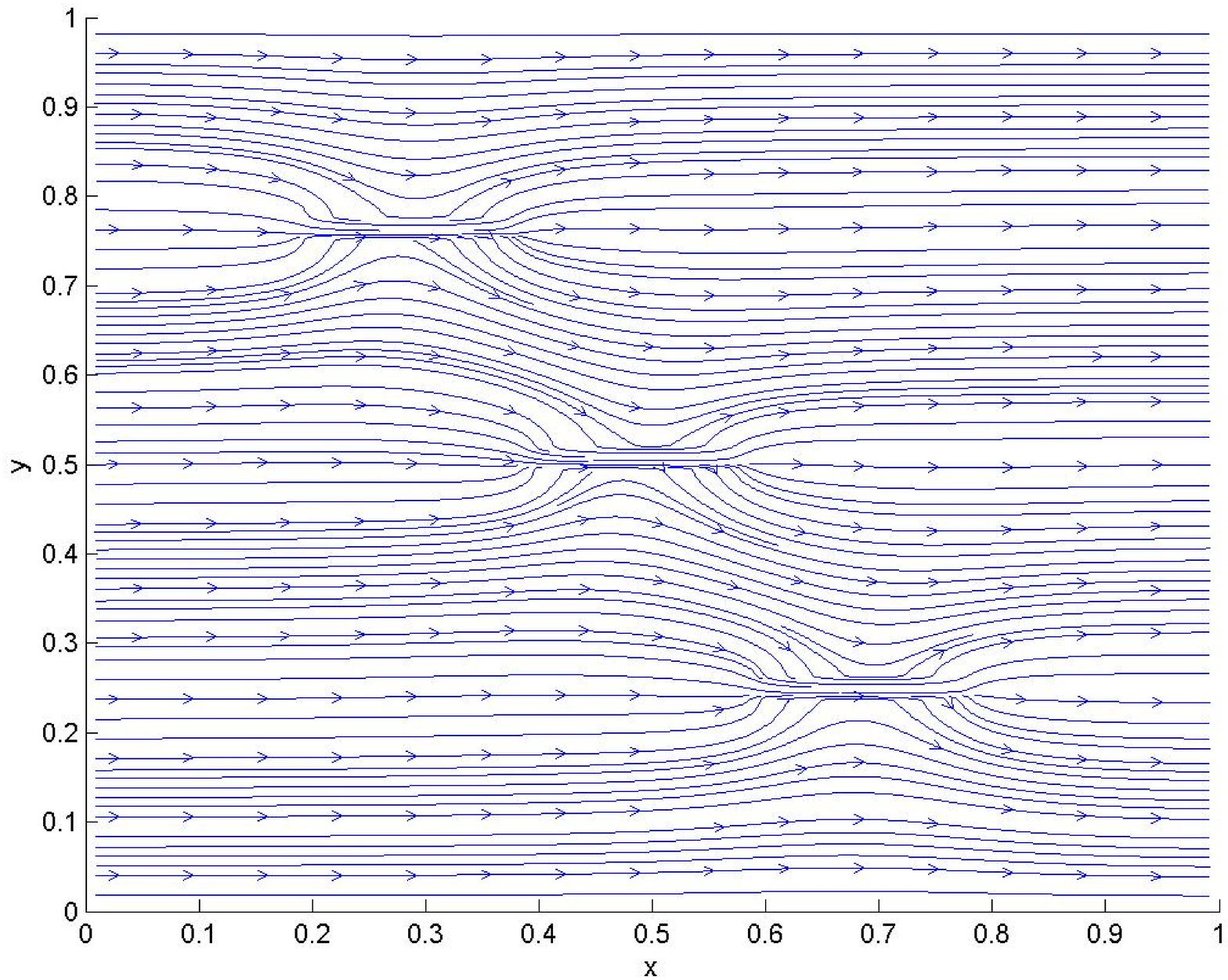}
  \end{center}
 \caption{(a) Convergence of the  implicit scheme for $\beta=1$,  and $\beta=2$ in \eqref{noise2} for SPDE \eqref{reactiondif1}.
 The orders of convergence in time  are $0.62$  for $\beta=1$, $0.54$ for $\beta=2$.  The total number of samples used is 100. The graph of the streamline $\mathbf{q}$ is given at  (b).}
 \label{FIGII}
 \end{figure}
 In \figref{FIGII}, we can observe the convergence of the the implicit scheme for two noise's parameters.
 Indeed, the order of convergence in time is $0.62$  for $\beta=1$ and $0.54$ for $\beta=2$. 
 These orders are close to the theoretical  orders $0.5$ obtained in \thmref{mainresult1} for $\beta=1$ and $\beta=2$.
%
%
\newpage


\end{document}